\numberwithin{equation}{section}
\newcommand{\Div}{\divergence}
\newcommand{\R}{\mathbb R}
\newcommand{\E}{\mathbb E}
\newcommand{\p}{\mathbb P}
\newcommand{\F}{\mathfrak F}
\newcommand{\pid}{\pi_{\mathrm{det}}}
\newcommand{\dd}{\mathrm d}
\newcommand{\dx}{\, \mathrm{d}x}
\newcommand{\ds}{\, \mathrm{d}\sigma}
\newcommand{\dt}{\, \mathrm{d}t}
\newcommand{\dxt}{\,\mathrm{d}x\, \mathrm{d}t}
\newcommand{\dxs}{\,\mathrm{d}x\, \mathrm{d}\sigma}
\newcommand{\dif}{\mathrm{d}}
\newcommand{\mf}{\mathfrak{F}}
\newcommand{\prst}{\mathbb{P}}
\newcommand{\mt}{\mathbb{T}^2}
\newcommand{\tor}{\mathbb{T}^2}
\DeclareMathOperator{\diver}{div}
\begin{document}

\title[Convergence rates for stochastic Navier--Stokes equations]{Convergence rates for the numerical approximation of the 2D stochastic Navier--Stokes equations}

\author{Dominic Breit \& Alan Dodgson}
\address{Department of Mathematics, Heriot-Watt University, Riccarton Edinburgh EH14 4AS, UK}
\email{d.breit@hw.ac.uk, ad335@hw.ac.uk}

%
%

\begin{abstract}
We study stochastic Navier--Stokes equations in two dimensions with respect to periodic boundary conditions. The equations are perturbed by a nonlinear multiplicative stochastic forcing with linear growth (in the velocity) driven by a cylindrical Wiener process.
 We establish convergence rates for a finite-element based space-time approximation with respect to convergence in probability (where the error is measure in the $L^\infty_tL^2_x\cap L^2_tW^{1,2}_x$-norm). Our main result provides linear convergence in space
and convergence of order (almost) 1/2 in time. This improves earlier results from [E.~Carelli, A.~Prohl: Rates of convergence for discretizations of the stochastic incompressible Navier--Stokes equations. SIAM J. Numer. Anal. 50(5), 2467--2496. (2012)] where the convergence rate in time is only (almost) 1/4.
Our approach is based on a careful analysis of the pressure function using a stochastic pressure decomposition.

\end{abstract}

\subjclass[2010]{65M15, 65C30, 60H15, 60H35}
\keywords{Stochastic Navier--Stokes equations, Finite Element Methods, space-time discretization, convergence rates}

\date{\today}

\maketitle

%
%
%
%
%
%
%
%
%
%

\section{Introduction}

In this paper we are concerned with the stochastic Navier--Stokes equations
\begin{align}\label{eq:SNS}
\left\{\begin{array}{rc}
\dd\bfu=\mu\Delta\bfu\dt-(\nabla\bfu)\bfu\dt-\nabla\pi\dt+\Phi(\bfu)\dd W
& \mbox{in $Q$,}\\
\Div \bfu=0\qquad\qquad\qquad\qquad\qquad\,\,\,\,& \mbox{in $Q$,}\\
\bfu(0)=\bfu_0\,\qquad\qquad\qquad\qquad\qquad&\mbox{ \,in $\mathcal O$,}\end{array}\right.
\end{align}
on a filtered probability space $(\Omega,\F,(\F_t)_{t\geq0},\prst)$. The equations are perturbed by an $(\F_t)$-Wiener process (possibly infinite dimensional) and $\Phi$ grows linearly in $\bfu$ (see Section 2.1 for the precise assumptions). The quantity $\mu>0$ is the viscosity of the fluid and $\bfu_0$ is a given (random initial datum).
Here the unknowns are the velocity field $\bfu:\Omega\times Q\rightarrow\R^N$ and the pressure $\pi:\Omega\times Q  \rightarrow\R$, where $Q=(0,T)\times\mathcal O$ and $\mathcal O\subset\R^N$ with $N=2,3$.\\
The stochastic perturbation in the balance of momentum \eqref{eq:SNS}$_1$ can take into account for physical, empirical or numerical uncertainties and thermodynamical fluctuations. In addition to that, a main reason why the stochastic Navier--Stokes equations \eqref{eq:SNS} became so popular in fluid mechanical research is their application to turbulence theory (see, for instance, \cite{Bi} and \cite{MikRoz}). Its mathematical investigation started in the 70's with the pioneering paper of Bensoussan and Temam \cite{BeTe}. They provide a semi-deterministic approach based on the flow-transformation. A first fully stochastic theory has been developed by Flandoli and Gatarek \cite{FlaGat} by showing the existence of a martingale solution. These solutions are weak in the stochastic sense meaning that
the underlying probability space as well the Wiener process $W$ are not a priori known but become an integral part of the solution. In two dimensions, when uniqueness is known, a stochastically strong solution exists (it is defined on a given probability space with a given Wiener process), see \cite{CC}.
Nowadays there is a huge amount of literature concerning the analysis of \eqref{eq:SNS} and most of the results from the deterministic theory found their stochastic counterpart. For an overview we refer to the recent survey article
\cite{Ro}.\\
The situation about the numerical approximation of \eqref{eq:SNS} is totally different and only very few results are available.
In \cite{BCP} a fully practical space-time approximation for the three-dimensional stochastic Navier--Stokes equations in a bounded domain is studied. It is shown that the sequence of approximate solutions converges in law (up to a subsequence) to a martingale solution if both discretization parameters tend to zero. This is the best result one can hope for without using some unproven hypotheses about the space-regularity of solutions
(or to be content with local-in-time results). In two dimensions the situation is much better, at least if periodic boundary conditions are considered, that is
\[
\mathcal O = \mathbb T^2 = \left( (-\pi, \pi)|_{\{ - \pi, \pi \}} \right)^2.
\]
The space-regularity of the unique strong solution is well-known (see for instance \cite{KukShi}). Based on this the convergence rates for a finite-element based space-time approximation
is analysed in \cite{CP}. The result is linear convergence in space
and convergence of order (almost) 1/4 in time. The precise estimate comparing
the solution $\bfu$ and its space-time approximation $\bfu_{h,m}$ reads as
\begin{align}\label{eq:thm:4intro}
\begin{aligned}
\E\bigg[\mathbf{1}_{\Omega_{\Delta t,h}}\bigg(\max_{1\leq m\leq M}\|\bfu(t_m)-\bfu_{h,m}\|_{L^2_x}^2&+\sum_{m=1}^M \Delta t \|\nabla\bfu(t_m)-\nabla\bfu_{h,m}\|_{L^2_x}^2\bigg)\bigg]\\&\leq \,c\,\big(h^2+(\Delta t)^{2\alpha}\big)
\end{aligned}
\end{align}
for any $\alpha<1/4$. Here we have $\Omega_{\Delta t,h}\subset\Omega$ with $\p\big(\Omega\setminus\Omega_{\Delta t,h}\big)\rightarrow0$ as $\Delta t,h\rightarrow0$. Consequently, one can infer
from \eqref{eq:thm:4intro} convergence in probability (as defined by Printems \cite{Pr}) with asymptotic rates
(almost) $1/4$ and 1 respectively. The aim of the present paper is to improve
the convergence rates in time from (almost) $1/4$ to (almost) $1/2$, see Theorem \ref{thm:4} for the precise statement.
This is certainly the optimal convergence rate in time in view of the stochastic forcing and should also be considered as the natural result because of the space-regularity of the solution. 
\\
The reason for the low convergence rate in time in \cite{CP}
is the low time-regularity of the pressure function $\pi$ in \eqref{eq:SNS}. Its appearance can only be avoided when working with finite element functions which are exactly divergence-free. Unfortunately, their construction is quite complicated such that the preferred descretizations (such as the Taylor--Hood, the Crouzeix--Raviart, and the MINI element, see \cite{BF,GL,GS}) are only 
asymptotically divergence-free. The low regularity of the pressure gradient
arises from the stochastic forcing (in fact, $\nabla\pi$ behaves as $\dd W$) and it does not seem possible to improve unless the noise is divergence-free (this is quite restrictive as it only allows certain additive or linear multiplicative noise). In the general case (non-divergence-free finite elements and nonlinear multiplicative noise) a more subtle analysis of the pressure function
is required. Our main idea is a decomposition of the pressure into a deterministic and a stochastic component
(such a decomposition first appeared in \cite{Br}). The deterministic pressure part
behaves as the convective term $\bfu\otimes\bfu$. The latter one can be estimated along the lines of \cite{CP}
(following classical deterministic arguments combined with a discrete stopping time).
In addition a second stochastic integral appears which behaves similarly to the original stochastic integral. Although the time-regularity of this part has not improved, we benefit from the averaging properties of the It\^{o}-integral. Combining these ideas finally leads to the optimal convergence rate in Theorem \ref{thm:4}.\\
The paper is organized as follows. In Section 2 we present the mathematical framework, that is the probability setup, the concept of solutions and their qualitative properties. In particular, we give improved (compared to \cite{CP}) results on the time-regularity of $\nabla\bfu$, see Corollary \ref{cor:uholder} b). This is needed in Section 3 in order to estimate the error between the continuous solution and the time-discrete solution. The heart of the paper is Section 4 were we estimate the error between the time-discrete solution and the space-time discretization.
Crucial tools are the space-regularity of the time-discrete solution from \cite{BCP}
and the decomposition of the corresponding pressure function.

\section{Mathematical framework}
\label{sec:framework}

\subsection{Probability setup}

Let $(\Omega,\F,(\F_t)_{t\geq0},\prst)$ be a stochastic basis with a complete, right-continuous filtration. The process $W$ is a cylindrical Wiener process, that is, $W(t)=\sum_{k\geq1}\beta_k(t) e_k$ with $(\beta_k)_{k\geq1}$ being mutually independent real-valued standard Wiener processes relative to $(\F_t)$ and $(e_k)_{k\geq1}$ is a complete orthonormal system in a sepa\-rable Hilbert space $\mathfrak{U}$.
To give the precise definition of the diffusion coefficient $\varPhi$, consider $\bfz\in L^2(\mt)$ and let $\,\varPhi(\bfz):\mathfrak{U}\rightarrow L^2(\mt)$ be defined by $\Phi(\bfz)e_k=\bfg_k(\cdot,\bfz(\cdot))$. In particular, we suppose
that $\bfg_k\in C^1(\mt\times\R^2)$ and that the following conditions hold
\begin{align}\label{eq:phi}
\begin{aligned}
\sum_{k\geq1}|\bfg_k(x,\bfxi)|^2 \leq c(1+|\bfxi|^2)&,\qquad
\sum_{k\geq1}|\nabla_{\bfxi} \bfg_k(x,\bfxi)|^2\leq c,\\
\sum_{k\geq1}|\nabla_x \bfg_k(x,\bfxi)|^2 &\leq c(1+|\bfxi|^2),\quad x\in \mt,\,\bfxi\in\R^2.
\end{aligned}
\end{align}
If we are interested in higher regularity some further assumptions are in place and we require additionally $\bfg_k\in C^2(\mt\times\R^2)$ together with
\begin{align}\label{eq:phi2}
\begin{aligned}
\sum_{k\geq1}|\nabla_x^2\bfg_k(x,\bfxi)|^2 \leq c(1+|\bfxi|^2)&,\qquad
\sum_{k\geq1}|\nabla^2_{\bfxi} \bfg_k(x,\bfxi)|^2\leq \frac{c}{1+|\bfxi|^2},\\
\sum_{k\geq1}|\nabla_x\nabla_{\bfxi} \bfg_k(x,\bfxi)|^2 &\leq c,\quad x\in \mt,\,\bfxi\in\R^2.
\end{aligned}
\end{align}
We remark that the first inequality of \eqref{eq:phi} implies
\begin{align}\label{norm1}
\|\Phi(\bfu)\|_{L_2(\mathfrak U;L^2_x)}\leq\,c\big(1+\|\bfu\|_{L^2_x}\big)\quad\forall \bfu\in L^2(\mt),
\end{align}
all inequalities from \eqref{eq:phi} imply
\begin{align}\label{norm2}
\|\Phi(\bfu)\|_{L_2(\mathfrak U;W^{1,2}_x)}\leq\,c\big(1+\|\bfu\|_{W^{1,2}_x}\big)\quad\forall \bfu\in W^{1,2}(\mt),
\end{align}
and
\eqref{eq:phi} finally yields
\begin{align}\label{norm3}
\|\Phi(\bfu)\|_{L_2(\mathfrak U;W^{2,2}_x)}\leq\,c\big(1+\|\bfu\|_{W^{2,2}_x}\big)\quad\forall \bfu\in W^{2,2}(\mt).
\end{align}
In fact, all our results apply if we replace \eqref{eq:phi} and \eqref{eq:phi2} by the corresponding norm estimates above.\\
Furthermore, the conditions imposed on $\varPhi$, particularly the first assumption from \eqref{eq:phi}, allow us to define stochastic integrals
Given an $(\mathfrak F_t)$-progressively measurable process $\bfu\in L^2(\Omega;L^2(0,T;L^2(\mt)))$, the stochastic integral $$t\mapsto\int_0^t\varPhi(\bfu)\,\dif W$$
is a well defined process taking values in $L^2(\mt)$ (see \cite{PrZa} for the detailed construction). Moreover, we can multiply by test functions to obtain
 \begin{align*}
\bigg\langle\int_0^t \varPhi(\bfu)\,\dd W,\bfphi\bigg\rangle=\sum_{k\geq 1} \int_0^t\langle \bfg_k(\bfu),\bfphi\rangle\,\dd\beta_k,\quad \bfphi\in L^2(\mt).
\end{align*}
Similarly, we can define stochastic integrals with values in $W^{1,2}(\mt)$ and $W^{2,2}(\mt)$ respectively if $\bfu$ belongs to the corresponding class.\\
The following lemma is a helpful tool to analysis the time-regularity
of stochastic integrals (see, e.g., \cite[Lemma 9.1.3. b)]{Br2} or \cite[Lemma 4.6]{Ho1}).
\begin{lemma}
\label{lems:Holder}
 Let $\psi\in L^r(\Omega;L^r(0,T;L_2(\mathfrak U,L^2(\mt))))$, $r>2$, by an $(\mathfrak F_t)$-progressively measureable process and $W$ a cylindrical $(\F_t)$-Wiener process on $\mathfrak U$. Then the paths of the process $Z_t:=\int_0^t \psi\,\dd W$ are $\p$-a.s. H\"older continuous with exponent $\alpha\in\big(\frac{1}{r},\frac{1}{2}\big)$ and it holds
\begin{align*}
\E\Big[\|Z\|_{C^{\alpha}([0,T];L^2(\mt))}^r\Big]\leq c_\alpha\,\E\bigg[\sup_{0\leq t\leq T}\|\psi\|_{L_2(\mathfrak U,L^2(\mt))}^2\dt\bigg]^{\frac{r}{2}}.
\end{align*}
\end{lemma}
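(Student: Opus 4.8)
The plan is to use the \emph{factorization method} of Da~Prato, Kwapie\'n and Zabczyk, which turns the stochastic integral into an ordinary (Riemann--Liouville) fractional integral of an auxiliary stochastic convolution, so that the Hölder regularity can be read off from the smoothing of the fractional integral operator. Fix $\alpha$ in the asserted range and an auxiliary exponent $\gamma\in(\tfrac1r,\tfrac12)$ with $\gamma-\tfrac1r\geq\alpha$ (the interval $(\tfrac1r,\tfrac12)$ being non-empty precisely because $r>2$), and write $r'=\tfrac{r}{r-1}$. Starting from the elementary Beta-integral identity $\int_\sigma^t(t-\tau)^{\gamma-1}(\tau-\sigma)^{-\gamma}\,\dd\tau=\tfrac{\pi}{\sin(\pi\gamma)}$ for $0\le\sigma<t$, together with the stochastic Fubini theorem (applicable once the integrability of the integrand has been checked), one obtains the representation
\begin{align*}
Z_t=\frac{\sin(\pi\gamma)}{\pi}\int_0^t(t-\tau)^{\gamma-1}\,Y_\tau\,\dd\tau,\qquad Y_\tau:=\int_0^\tau(\tau-\sigma)^{-\gamma}\,\psi(\sigma)\,\dd W_\sigma .
\end{align*}

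From here the proof rests on two estimates, one probabilistic and one purely deterministic. Probabilistically, since $2\gamma<1$ the kernel $u\mapsto u^{-2\gamma}$ belongs to $L^1(0,T)$, and combining the Burkholder--Davis--Gundy inequality with Young's convolution inequality on $(0,T)$ (used with the exponent triple $(\tfrac r2,1,\tfrac r2)$, which requires $r\ge2$) yields $\E\int_0^T\|Y_\tau\|_{L^2_x}^r\,\dd\tau\le c_{r,\gamma,T}\,\E\int_0^T\|\psi(\sigma)\|_{L_2(\mathfrak U;L^2_x)}^r\,\dd\sigma$, so that $Y\in L^r(\Omega;L^r(0,T;L^2_x))$ with the corresponding bound. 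Deterministically, one uses the standard fact that the operator $f\mapsto\int_0^{\cdot}(\cdot-\tau)^{\gamma-1}f(\tau)\,\dd\tau$ maps $L^r(0,T;L^2_x)$ boundedly into $C^{\gamma-1/r}([0,T];L^2_x)$: pointwise control is Hölder's inequality (the $L^{r'}$-norm of $u\mapsto u^{\gamma-1}$ being finite precisely because $\gamma>\tfrac1r$), while the Hölder seminorm comes from splitting the increment into the near-diagonal part $\int_s^t(t-\tau)^{\gamma-1}f\,\dd\tau$ and the kernel-difference part $\int_0^s\big[(t-\tau)^{\gamma-1}-(s-\tau)^{\gamma-1}\big]f\,\dd\tau$, each controlled by $c\,|t-s|^{\gamma-1/r}\|f\|_{L^r}$ via elementary kernel estimates. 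Composing these with the representation above gives $Z\in C^{\gamma-1/r}([0,T];L^2_x)$ $\p$-a.s.\ together with $\E\big[\|Z\|_{C^{\gamma-1/r}([0,T];L^2_x)}^r\big]\le c_{r,\gamma,T}\,\E\int_0^T\|\psi(\sigma)\|_{L_2(\mathfrak U;L^2_x)}^r\,\dd\sigma$, which is the assertion since $\gamma-\tfrac1r\geq\alpha$.

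The only genuinely delicate point is the bookkeeping with the two singular kernels: the Burkholder--Davis--Gundy step needs $u^{-2\gamma}\in L^1$, hence $\gamma<\tfrac12$, whereas the smoothing step needs $u^{\gamma-1}\in L^{r'}$, hence $\gamma>\tfrac1r$, and these requirements are simultaneously satisfiable exactly because $r>2$ --- this is where the hypothesis enters. Everything else (the stochastic Fubini theorem, progressive measurability of $Y$, the value of the Beta integral) is routine. Alternatively, one may bypass the factorization and appeal directly to the Kolmogorov--Chentsov continuity theorem, starting from $\E[\|Z_t-Z_s\|_{L^2_x}^r]\le c_r\,|t-s|^{r/2-1}\,\E\int_s^t\|\psi(\sigma)\|_{L_2(\mathfrak U;L^2_x)}^r\,\dd\sigma$ (itself a consequence of Burkholder--Davis--Gundy and Hölder's inequality in time); this is shorter but needs $r>4$ and gives a strictly smaller Hölder range, so I would prefer the factorization argument whenever the sharp exponent is wanted.
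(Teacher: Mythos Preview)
The paper does not actually prove this lemma; it merely cites \cite[Lemma~9.1.3(b)]{Br2} and \cite[Lemma~4.6]{Ho1}, and your factorization argument is precisely the standard proof appearing in those references, with the kernel bookkeeping ($\gamma<\tfrac12$ for the BDG step, $\gamma>\tfrac1r$ for the smoothing step) handled correctly. One caveat worth recording: the range your argument genuinely delivers is $\alpha\in(0,\tfrac12-\tfrac1r)$, since you require $\gamma-\tfrac1r\ge\alpha$ with $\gamma<\tfrac12$; the interval $(\tfrac1r,\tfrac12)$ printed in the lemma is almost certainly a misprint, and your opening sentence ``fix $\alpha$ in the asserted range \dots\ with $\gamma-\tfrac1r\ge\alpha$'' is not satisfiable for $\alpha\ge\tfrac12-\tfrac1r$, so you should state the corrected range explicitly rather than defer to the lemma's wording.
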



\subsection{The concept of solutions}
\label{subsec:solution}
In dimension two, pathwise uniqueness for weak solutions is known under \eqref{eq:phi}, we refer the reader for instance to Capi\'nski--Cutland \cite{CC} and Capi\'nski \cite{Ca}. Consequently, we may work with the definition of a weak pathwise solution.

\begin{definition}\label{def:inc2d}
Let $(\Omega,\mf,(\mf_t)_{t\geq0},\prst)$ be a given stochastic basis with a complete right-continuous filtration and an $(\mf_t)$-cylindrical Wiener process $W$. Let $\bfu_0$ be an $\mf_0$-measurable random variable. Then $\bfu$ is called a \emph{weak pathwise solution} \index{incompressible Navier--Stokes system!weak pathwise solution} to \eqref{eq:SNS} with the initial condition $\bfu_0$ provided
\begin{enumerate}
\item the velocity field $\bfu$ is $(\mf_t)$-adapted and
$$\bfu \in C_w([0,T];L^2_{\diver}(\mt))\cap L^2(0,T; W^{1,2}_{\diver}(\tor))\quad\text{$\p$-a.s.},$$
\item the momentum equation
\begin{align*}
&\int_{\mt}\bfu(t)\cdot\bfvarphi\dx-\int_{\mt}\bfu_0\cdot\bfvarphi\dx
\\&=\int_0^t\int_{\mt}\bfu\otimes\bfu:\nabla\bfvarphi\dx\,\dif t-\mu\int_0^t\int_{\mt}\nabla\bfu:\nabla\bfvarphi\dx\,\dif s\\
&\qquad+\int_0^t\int_{\mt}\Phi(\bfu)\cdot\bfvarphi\dx\,\dif W.
\end{align*}
holds $\p$-a.s. for all $\bfvarphi\in C^\infty_{\diver}(\mt)$ and all $t\in[0,T]$.
\end{enumerate}
\end{definition}

\begin{theorem}\label{thm:inc2d}
Let $N=2$ and assume that $\Phi$ satisfies \eqref{eq:phi}. Let $(\Omega,\mf,(\mf_t)_{t\geq0},\prst)$ be a given stochastic basis with a complete right-continuous filtration and an $(\mf_t)$-cylindrical Wiener process $W$. Let $\bfu_0$ be an $\mf_0$-measurable random variable such that $\bfu_0\in L^r(\Omega;L^2_{\mathrm{div}}(\mathbb T^2))$ for some $r>2$. Then there exists a unique weak pathwise solution to \eqref{eq:SNS} in the sense of Definition \ref{def:inc2d} with the initial condition $\bfu_0$.
\end{theorem}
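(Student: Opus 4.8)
The plan is to follow the classical scheme for probabilistically strong solutions of the 2D stochastic Navier--Stokes system: a Galerkin approximation, uniform a priori bounds, compactness together with a Skorokhod representation, identification of a martingale solution, pathwise uniqueness, and finally the Gy\"ongy--Krylov characterisation of convergence in probability (equivalently Yamada--Watanabe for SPDEs) to obtain a solution on the prescribed stochastic basis $(\Omega,\mf,(\mf_t),\prst,W)$. Let $(\mathbf w_j)_{j\ge1}$ be the $L^2_{\diver}(\mt)$-orthonormal eigenfunctions of the Stokes operator (trigonometric polynomials on $\mt$), let $P_n$ be the orthogonal projection onto their linear span, and let $\bfu^n=\sum_{j=1}^nc^n_j(t)\mathbf w_j$ solve the finite-dimensional It\^o system $\dd\bfu^n=\mu\Delta\bfu^n\dt-P_n[(\nabla\bfu^n)\bfu^n]\dt+P_n\Phi(\bfu^n)\,\dd W$ with $\bfu^n(0)=P_n\bfu_0$. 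Since the drift is locally Lipschitz and, by \eqref{norm1}, the diffusion has linear growth, this system has a unique global strong solution as soon as the energy bound below is available.

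For the a priori estimates I would apply It\^o's formula to $\|\bfu^n(t)\|_{L^2_x}^2$. The convective term disappears because $\langle(\nabla\bfu^n)\bfu^n,\bfu^n\rangle=0$ by incompressibility, leaving $\dd\|\bfu^n\|_{L^2_x}^2+2\mu\|\nabla\bfu^n\|_{L^2_x}^2\dt=\|P_n\Phi(\bfu^n)\|_{L_2(\mathfrak U;L^2_x)}^2\dt+2\langle\bfu^n,P_n\Phi(\bfu^n)\,\dd W\rangle$. Using the linear growth \eqref{norm1}, the Burkholder--Davis--Gundy inequality and Gronwall's lemma one gets, for the given exponent $r>2$,
\[
\E\Big[\sup_{t\le T}\|\bfu^n(t)\|_{L^2_x}^r+\Big(\int_0^T\|\nabla\bfu^n\|_{L^2_x}^2\dt\Big)^{r/2}\Big]\le c\big(1+\E\|\bfu_0\|_{L^2_x}^r\big),
\]
uniformly in $n$. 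For compactness in time I would split $\bfu^n$ into its drift part, bounded in $W^{1,2}(0,T;W^{-1,2}_x)$ (here $(\nabla\bfu^n)\bfu^n=\diver(\bfu^n\otimes\bfu^n)$ is controlled in $L^2(0,T;W^{-1,2}_x)$ via the 2D interpolation $\|\bfu^n\|_{L^4_x}^2\le c\|\bfu^n\|_{L^2_x}\|\nabla\bfu^n\|_{L^2_x}$ and the energy bound), and the stochastic integral, whose paths are H\"older continuous in $L^2_x$ with finite $r$-th moment by Lemma \ref{lems:Holder} (this is where $r>2$ enters). Hence $\bfu^n$ is bounded in $W^{\alpha,r}(0,T;W^{-1,2}_x)$ for some $\alpha>0$ and in $L^2(0,T;W^{1,2}_{\diver}(\mt))$, so by Aubin--Lions--Simon the laws of $(\bfu^n,W)$ are tight on the Polish space $\big(L^2(0,T;L^2_{\diver}(\mt))\cap C([0,T];W^{-1,2}_x)\big)\times C([0,T];\mathfrak U_0)$.

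The rest is routine. A Skorokhod representation provides new variables $(\tilde\bfu^n,\tilde W^n)\to(\tilde\bfu,\tilde W)$ $\tilde\p$-a.s.\ with the same laws; passing to the limit in the Galerkin identity --- the convective term via the strong $L^2(0,T;L^2_x)$-convergence combined with the uniform $L^\infty_tL^2_x\cap L^2_tW^{1,2}_x$ bound, the stochastic integral via the standard stability lemma for It\^o integrals --- shows that $\tilde\bfu$ is a weak (martingale) solution with $\tilde\bfu\in C_w([0,T];L^2_{\diver}(\mt))\cap L^2(0,T;W^{1,2}_{\diver}(\mt))$ $\tilde\p$-a.s. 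For pathwise uniqueness one applies It\^o's formula to $\|\mathbf v\|_{L^2_x}^2$ with $\mathbf v=\bfu_1-\bfu_2$ the difference of two solutions with the same data: the surviving trilinear term $\langle(\mathbf v\cdot\nabla)\bfu_2,\mathbf v\rangle$ is bounded by $\mu\|\nabla\mathbf v\|_{L^2_x}^2+\tfrac c\mu\|\nabla\bfu_2\|_{L^2_x}^2\|\mathbf v\|_{L^2_x}^2$ using $\|\mathbf v\|_{L^4_x}^2\le c\|\mathbf v\|_{L^2_x}\|\nabla\mathbf v\|_{L^2_x}$, while the noise difference is at most $c\|\mathbf v\|_{L^2_x}^2$ by the Lipschitz bound in \eqref{eq:phi}; a stochastic Gronwall argument localised by $\tau_R=\inf\{t:\int_0^t\|\nabla\bfu_2\|_{L^2_x}^2\,\dd s\ge R\}$ forces $\mathbf v\equiv0$ (alternatively one quotes \cite{CC,Ca}). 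Existence of a martingale solution plus pathwise uniqueness then yields, via Gy\"ongy--Krylov, convergence of the Galerkin sequence in probability on the original space and the existence of the unique weak pathwise solution. The technical heart --- and the only place requiring care --- is the closure of the a priori estimate, where the 2D energy cancellation of the convective term meets the linear growth of $\Phi$; both are indispensable, and the hypothesis $r>2$ is used only to obtain enough time-regularity for the compactness step.
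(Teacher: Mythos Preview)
The paper does not supply a proof of Theorem~\ref{thm:inc2d}; it is stated as a known result, with pathwise uniqueness attributed to \cite{CC,Ca} and the passage from martingale to strong solutions mentioned in the introduction. Your outline (Galerkin approximation, uniform energy bounds via the 2D convective cancellation and \eqref{norm1}, tightness through Aubin--Lions--Simon and Lemma~\ref{lems:Holder}, Skorokhod representation to get a martingale solution, pathwise uniqueness by a localized Gronwall on $\|\bfu_1-\bfu_2\|_{L^2_x}^2$, and the Gy\"ongy--Krylov lemma to return to the original basis) is the standard and correct route, and is in line with what the cited references do. There is nothing to compare against in the present paper, and your sketch contains the right ingredients in the right order; the only point worth sharpening for a self-contained write-up is to verify that the limit $\tilde\bfu$ inherits the $C_w([0,T];L^2_{\diver})$ regularity from the uniform $L^\infty_tL^2_x$ bound and the $C([0,T];W^{-1,2}_x)$ convergence, which is routine.
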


 Now, for $\bfphi\in C^\infty(\mt)$ we can insert $\bfphi-\nabla\Delta^{-1}\Div\bfphi$
and obtain
\begin{align}
\nonumber
\int_{\mt}\bfu(t)\cdot\bfvarphi\dx &+\int_0^t\int_{\mt}\mu\nabla\bfu:\nabla\bfphi\dxs-\int_0^t\int_{\mt}\bfu\otimes\bfu:\nabla\bfphi\dxs\\
\label{eq:pressure}&=\int_{\mt}\bfu(0)\cdot\bfvarphi\dx
+\int_0^t\int_{\mt}\pi_{\mathrm{det}}\,\Div\bfphi\dxs
\\
\nonumber&+\int_{\mt}\int_0^t\Phi(\bfu)\,\dd W\cdot \bfvarphi\dx+\int_{\mt}\int_0^t\Phi^\pi\,\dd W\cdot \bfvarphi\dx,
\end{align}
where 
\begin{align*}
\pi_{\mathrm{det}}&=-\Delta^{-1}\Div\Div\big(\bfu\otimes\bfu\big),\\
\Phi^\pi&=-\nabla\Delta^{-1}\Div\Phi(\bfu).
\end{align*}
This corresponds to the stochastic pressure decomposition introduced in \cite{Br} (see also \cite[Chap. 3]{Br2} for a slightly different presentation). However, the situation with periodic boundary conditions we are considering here is much easier as the harmonic component of the pressure disappears. 

\subsection{Regularity of solutions}
\begin{lemma}\label{lem:reg}
Let the assumptions of Theorem \ref{thm:inc2d} be satisfied. 
 \begin{enumerate}
\item[(a)] We have
\begin{align}\label{eq:W12}
\E\bigg[\sup_{0\leq t\leq T}\int_{\mt}|\bfu|^2\dx+\int_0^T\int_{\mt}|\nabla\bfu|^2\dxt\bigg]^{\frac{r}{2}}\leq\,c_r\,\E\Big[1+\|\bfu_0\|_{L^2_x}^2\Big]^{\frac{r}{2}}.
\end{align}
\item[(b)] Assume that $\bfu_0\in L^r(\Omega,W^{1,2}_{div}(\mt))$ for some $r\geq2$. Then we have
\begin{align}\label{eq:W22}
\E\bigg[\sup_{0\leq t\leq T}\int_{\mt}|\nabla\bfu|^2\dx+\int_0^T\int_{\mt}|\nabla^2\bfu|^2\dxt\bigg]^{\frac{r}{2}}\leq\,c_r\,\E\big[1+\|\bfu_0\|_{W^{1,2}_x}^2\Big]^{\frac{r}{2}}.
\end{align}
\item[(c)] Assume that $\bfu_0\in L^r(\Omega,W^{2,2}_{div}(\mt))\cap L^{5r}(\Omega,W^{1,2}_{div}(\mt))$ for some $r\geq2$ and that \eqref{eq:phi2} holds. Then we have
\begin{align}\label{eq:W32}
\E\bigg[\sup_{0\leq t\leq T}\int_{\mt}|\nabla^2\bfu|^2\dx+\int_0^T\int_{\mt}|\nabla^3\bfu|^2\dxt\bigg]^{\frac{r}{2}}\leq\,c_r\,\E\Big[1+\|\bfu_0\|_{W^{2,2}_x}^2+\|\bfu_0\|_{W^{1,2}_x}^{10}\Big]^{\frac{r}{2}}.
\end{align}
\end{enumerate}
\end{lemma}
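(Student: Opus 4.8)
The proof of Lemma \ref{lem:reg} follows the classical strategy of energy estimates applied at successively higher levels of differentiability, combined with the Burkholder--Davis--Gundy inequality to control the stochastic terms. I describe the plan for each part in turn.

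\textbf{Part (a).} The plan is to apply It\^o's formula to the functional $\bfu\mapsto\|\bfu\|_{L^2_x}^2$. Since $\Div\bfu=0$, the convective term $\int_{\mt}(\nabla\bfu)\bfu\cdot\bfu\dx$ vanishes and the pressure term drops out, leaving
\begin{align*}
\dd\|\bfu\|_{L^2_x}^2+2\mu\|\nabla\bfu\|_{L^2_x}^2\dt=2\langle\bfu,\Phi(\bfu)\dd W\rangle+\|\Phi(\bfu)\|_{L_2(\mathfrak U;L^2_x)}^2\dt.
\end{align*}
Using \eqref{norm1}, the It\^o correction is bounded by $c(1+\|\bfu\|_{L^2_x}^2)$, which after integration and an application of Gronwall's lemma (in its stochastic form, i.e.\ after taking suprema and expectations) absorbs into the left-hand side. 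For the $r/2$-th moment one first raises the energy identity to the power $r/2$, then applies BDG to the stochastic integral: its quadratic variation is $\int_0^T\|\Phi(\bfu)\|_{L_2}^2\|\bfu\|_{L^2_x}^2\dt\leq c\sup_t\|\bfu\|_{L^2_x}^2\int_0^T(1+\|\bfu\|_{L^2_x}^2)\dt$, and Young's inequality lets one hide a small multiple of $\E[\sup_t\|\bfu\|_{L^2_x}^2]^{r/2}$ on the left. This is the standard argument and is essentially contained in \cite{FlaGat,CC}.

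\textbf{Part (b).} Here the plan is to test the momentum equation with $-\Delta\bfu$ (equivalently, apply It\^o's formula to $\|\nabla\bfu\|_{L^2_x}^2$ using the regularity of the solution, which for $N=2$ with $W^{1,2}$-data is known, see \cite{KukShi}). Again the pressure term vanishes after integration by parts (testing a gradient against a Laplacian of a divergence-free field), and one obtains
\begin{align*}
\dd\|\nabla\bfu\|_{L^2_x}^2+2\mu\|\nabla^2\bfu\|_{L^2_x}^2\dt=2\langle(\nabla\bfu)\bfu,\Delta\bfu\rangle\dt-2\langle\nabla\bfu,\nabla\Phi(\bfu)\dd W\rangle+\|\Phi(\bfu)\|_{L_2(\mathfrak U;W^{1,2}_x)}^2\dt.
\end{align*}
The convective term is the only genuinely nonlinear obstacle: in two dimensions one estimates $|\langle(\nabla\bfu)\bfu,\Delta\bfu\rangle|\leq\|\bfu\|_{L^4_x}\|\nabla\bfu\|_{L^4_x}\|\nabla^2\bfu\|_{L^2_x}$ and then uses the Ladyzhenskaya/Gagliardo--Nirenberg inequality $\|\bfu\|_{L^4_x}^2\leq c\|\bfu\|_{L^2_x}\|\nabla\bfu\|_{L^2_x}$ and $\|\nabla\bfu\|_{L^4_x}^2\leq c\|\nabla\bfu\|_{L^2_x}\|\nabla^2\bfu\|_{L^2_x}$, giving a bound $\leq c\|\bfu\|_{L^2_x}\|\nabla\bfu\|_{L^2_x}^2\|\nabla^2\bfu\|_{L^2_x}+\tfrac{\mu}{2}\|\nabla^2\bfu\|_{L^2_x}^2$ after Young. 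One then absorbs the dissipation term and is left with a factor $\|\bfu\|_{L^2_x}^2\|\nabla\bfu\|_{L^2_x}^2$ multiplying $\|\nabla\bfu\|_{L^2_x}^2$; the quantity $\int_0^T\|\bfu\|_{L^2_x}^2\|\nabla\bfu\|_{L^2_x}^2\dt$ is controlled by part (a), so a (stochastic) Gronwall argument closes the estimate. The stochastic term is handled via \eqref{norm2} and BDG exactly as in part (a). This is the part carried out in \cite{CP}.

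\textbf{Part (c).} The plan is to differentiate once more and test with $\Delta^2\bfu$, or equivalently apply It\^o's formula to $\|\nabla^2\bfu\|_{L^2_x}^2$ (the required regularity again follows in two dimensions from \cite{KukShi} under \eqref{eq:phi2} and the stated integrability of $\bfu_0$). The structure is the same: the pressure disappears, a convective commutator term appears, and the stochastic term is controlled by \eqref{norm3}. \textbf{The main obstacle is the convective term} $\langle\nabla^2((\nabla\bfu)\bfu),\nabla^2\bfu\rangle$: expanding by the Leibniz rule produces terms like $\langle\nabla^2\bfu\cdot\bfu,\nabla^3\bfu\rangle$, $\langle\nabla\bfu\cdot\nabla\bfu,\nabla^3\bfu\rangle$ and $\langle(\nabla\bfu)\nabla^2\bfu,\nabla^2\bfu\rangle$ after an integration by parts, each of which must be estimated using two-dimensional interpolation inequalities (Gagliardo--Nirenberg among $L^2$-based norms of $\bfu,\nabla\bfu,\nabla^2\bfu,\nabla^3\bfu$) so as to leave only a small multiple of $\|\nabla^3\bfu\|_{L^2_x}^2$ plus a term of the form $P(\|\bfu\|_{L^2_x},\|\nabla\bfu\|_{L^2_x})\,\|\nabla^2\bfu\|_{L^2_x}^2$ with $P$ polynomial. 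The exponent $10$ and the hypothesis $\bfu_0\in L^{5r}(\Omega,W^{1,2})$ in \eqref{eq:W32} are precisely what is needed to make the resulting Gronwall factor $\int_0^T P\,\dt$ integrable to the $r/2$-th power, via part (b) and Hölder in $\omega$. Once the deterministic nonlinear estimate is in place, raising to the power $r/2$, applying BDG to the stochastic integral (whose quadratic variation involves $\|\Phi(\bfu)\|_{L_2(\mathfrak U;W^{2,2}_x)}^2\|\nabla^2\bfu\|_{L^2_x}^2\leq c(1+\|\nabla^2\bfu\|_{L^2_x}^2)\|\nabla^2\bfu\|_{L^2_x}^2$), and using Young to absorb $\E[\sup_t\|\nabla^2\bfu\|_{L^2_x}^2]^{r/2}$ on the left completes the proof. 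A minor technical point, handled in the usual way, is to run all these computations on a sequence of stopping times $\tau_K\uparrow T$ that keep the relevant norms finite, and then pass to the limit by monotone convergence.
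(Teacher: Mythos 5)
Your part (a) matches the paper's argument. The problem is in part (b): you estimate the convective term by Ladyzhenskaya interpolation and then propose to close with ``a (stochastic) Gronwall argument'' using the fact that $\int_0^T\|\bfu\|_{L^2_x}^2\|\nabla\bfu\|_{L^2_x}^2\dt$ is controlled by part (a). This does not give \eqref{eq:W22}. After absorbing the dissipation, Gronwall produces the integrating factor $\exp\big(c\int_0^T\|\bfu\|_{L^2_x}^2\|\nabla\bfu\|_{L^2_x}^2\dt\big)$, and to bound $\E[\sup_t\|\nabla\bfu\|_{L^2_x}^2]^{r/2}$ you would need \emph{exponential} moments of that integral. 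Part (a) only gives polynomial moments, and under a general linear-growth multiplicative noise $\Phi$ exponential moments are not available (the stochastic Gronwall lemma only yields moments of order $p<1$, not $r/2\geq 1$). This is precisely the obstruction that forces sample-set truncations in Carelli--Prohl, and the paper's proof is arranged to avoid it entirely: on the torus, for divergence-free $\bfu$ in two dimensions, one has the exact cancellation $\int_{\mt}(\nabla\bfu)\bfu\cdot\Delta\bfu\dx=0$ (the identity behind the enstrophy balance; it fails on bounded domains with no-slip conditions). So the convective term $(II)_3$ simply vanishes in the $W^{1,2}$-estimate and no Gronwall factor involving the solution ever appears. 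This cancellation is the missing idea in your part (b).

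A version of the same issue recurs in your part (c): you plan to leave a term of the form $P(\|\bfu\|_{L^2_x},\|\nabla\bfu\|_{L^2_x})\,\|\nabla^2\bfu\|_{L^2_x}^2$ and close by Gronwall, claiming the exponents are chosen so that $\int_0^T P\dt$ has $r/2$-th moments --- but again Gronwall needs $e^{\int_0^T P\dt}$ integrable, which is not implied. The paper instead estimates the whole commutator by
$\big|\int_{\mt}\partial^\beta\Div(\bfu\otimes\bfu)\cdot\partial^\beta\bfu\dx\big|\leq c\,\|\bfu\|_{W^{3,2}_x}^{7/4}\|\bfu\|_{W^{1,2}_x}^{3/4}\|\bfu\|_{L^2_x}^{1/2}$
(a Moser/Gagliardo--Nirenberg commutator bound, \cite[Lemma 2.1.20]{KukShi}) and then applies Young's inequality with exponents $(8/7,8)$ so that the \emph{entire} top-order contribution goes into $\delta\|\bfu\|_{W^{3,2}_x}^2$ (absorbed by the dissipation), leaving only $c(\delta)(\|\bfu\|_{W^{1,2}_x}^{10}+\|\bfu\|_{L^2_x}^{10})$ --- lower-order quantities controlled directly by part (b) via the $L^{5r}$-moment hypothesis, with no Gronwall step and no factor of $\|\nabla^2\bfu\|_{L^2_x}^2$. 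You correctly located where the exponent $10$ and the $L^{5r}(\Omega;W^{1,2})$ assumption come from, but the mechanism that makes the moment estimate close is absorption into the dissipation, not a Gronwall inequality with a random coefficient.
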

\begin{proof}
Part (a) is the standard a priori estimate which follows from applying It\^o's formula to the functional $f(\bfu)=\frac{1}{2}\|\bfu\|_{L^2_x}^2$ (and using Burkholder-Davis-Gundy inequality, assumption \eqref{eq:phi} and Gronwall's lemma). Note that this is legit in two dimensions since we have $\bfu\otimes\bfu\in L^2(Q)$ $\p$-a.s. by Ladyshenskaya's inequality.\\
The proof of (b) and (c) is quite similar to \cite[Corollary 2.4.13]{KukShi}. However, we are working with a different setup. So, we decided to give a formal proof although it is certainly known to experts. The proof can be made rigorous by working with a Galerkin-type approximation and show that the following estimates are uniform with respect to the dimension of the ansatz space. Such a procedure is quite standard, so we leave the details to the reader.\\
In order to show (b) we apply It\^{o}'s formula to the function $f_{\gamma}(\bfu):=\tfrac{1}{2}\|\partial_{\gamma}\bfu\|_{L^2_x}^2$ (with $\gamma\in\{1,2\}$) and obtain
\begin{align}\nonumber
\frac{1}{2}\|\partial_{\gamma}\bfu(t)\|_{L^2_x}^2&=\frac{1}{2}\|\partial_{\gamma}\bfu_0\|_{L^2_x}^2+\int_0^t f'_\gamma(\bfu)\,\dd\bfu+\frac{1}{2}\int_0^t f_\gamma''(\bfu)\,\dd\langle\langle\bfu\rangle\rangle\\
\label{eq:2Duniformest1}&=\frac{1}{2}\|\partial_{\gamma}\bfu_0\|_{L^2_x}^2+\int_{\mt}\int_0^t \partial_{\gamma}\bfu\cdot\dd\partial_{\gamma}\bfu\dx\\&+\frac{1}{2}\int_{\mt}\int_0^t\dd\Big\langle\!\Big\langle\int_0^{\cdot}\partial_{\gamma} \big(\Phi(\bfu)\,\dd W\big)\Big\rangle\!\Big\rangle\dx
=:(I)+(II)+(III).
\nonumber
\end{align}
We take the supremum in time, the $\frac{r}{2}$th power and apply expectations.
Summing over $\gamma$, we find  
\begin{align*}
(II)&=-(II)_1-(II)_2+(II)_3,\\
(II)_1&:=\mu\int_0^t\int_{\mt} |\nabla^2\bfu|^2\dxs,\\
(II)_2&:=\int_0^t\int_{\mt}\partial_{\gamma}\bfu
\cdot\partial_{\gamma}\Big(\Phi(\bfu)\,\dd W\Big)\dx,\\
(II)_{3}&:=\int_0^t\int_{\mt}(\nabla\bfu)\bfu\cdot\Delta\bfu\dxs.
\end{align*}
In two dimensions we have $(II)_3=0$ by elementary calculations. So we are left with estimating $(II)_2$ and obtain
\begin{align*}
(II)_2&=\sum_k\int_{\mt}\int_0^t\partial_\gamma\bfu\cdot\partial_\gamma\Big(\Phi(\bfu)e_k\,\dd\beta_k\Big)\dx\\
&=\sum_k\int_{\mt}\int_0^t\partial_\gamma\bfu\cdot\partial_\gamma\Big(\bfg_k(\cdot,\bfu)\,\dd\beta_k\Big)\dx\\
&=\sum_k\int_{\mt}\int_0^t \nabla_{\bfxi} \bfg_k(\cdot,\bfu)
(\partial_\gamma\bfu,\partial_\gamma\bfu)\,\dd\beta_k\dx\\
&+\sum_k\int_{\mt}\int_0^t\partial_\gamma\bfu\cdot \partial_\gamma \bfg_k(\cdot,\bfu)\,\dd\beta_k(\sigma)\dx\\
&=:(II)_2^1+(II)_2^2.
\end{align*}
On account of assumption (\ref{eq:phi}), Burkholder-Davis-Gundy inequality and Young's inequality we obtain for arbitrary $\delta>0$
\begin{align*}
\E\bigg[\sup_{0\leq t\leq T}|(II)^1_2|\bigg]^{\frac{r}{2}}&\leq \E\bigg[\sup_{0\leq t \leq T}\Big|\int_0^t\sum_k\int_{\mt}\nabla_{\bfxi} \bfg_k(\cdot,\bfu)
(\partial_\gamma\bfu,\partial_\gamma\bfu)\dx\,\dd\beta_k\Big|\bigg]^{\frac{r}{2}}\\
&\leq c\,\E\bigg[\sum_{k}\int_0^T\bigg(\int_{\mt} \nabla_{\bfxi} \bfg_k(\cdot,\bfu)
(\partial_\gamma\bfu,\partial_\gamma\bfu)\dx\bigg)^2\dt\bigg]^{\frac r4}\\
&\leq c\,\E\bigg[\bigg(\int_0^T\bigg(\int_{\mt}
|\partial_\gamma\bfu|^2\dx\bigg)^2\dt\bigg]^{\frac r4}\\
&\leq \delta\,\E\bigg[\sup_{0\leq t\leq T}\int_{\mt}
|\nabla\bfu|^2\dx\bigg]^{\frac{r}{2}}+ c(\delta)\,\E\bigg[\int_0^T\int_{\mt}
|\nabla\bfu|^2\dx\dt\bigg]^{\frac{r}{2}}\\
&\leq \delta\,\E\bigg[\sup_{0\leq t\leq T}\int_{\mt}
|\nabla\bfu|^2\dx\bigg]^{\frac{r}{2}}+c(\delta)\,\E\Big[1+\|\bfu_0\|_{L^2_x}^2\Big]^{\frac{r}{2}}
\end{align*}
using \eqref{eq:W12} in the last step.
By similar arguments we gain
\begin{align*}
\E\bigg[\sup_{0\leq t\leq T}|(II)^2_2|\bigg]^{\frac{r}{2}}
&\leq c\,\E\bigg[\int_0^T\bigg(\int_{\mt} \partial_\gamma \bfg_k(\cdot,\bfu)
\cdot\partial_\gamma\bfu\dx\bigg)^2\dt\bigg]^{\frac r4}\\
&\leq c\,\E\bigg[\bigg(\int_0^T\bigg(\int_{\mt}
|\partial_\gamma\bfu||\bfu|\dx\bigg)^2\dt\bigg]^{\frac r4}\\
&\leq c\,\E\bigg[\sup_{(0,T)}\int_{\mt}
|\bfu|^2\dx+\int_0^T\int_{\mt}
|\nabla\bfu|^2\dx\dt\bigg]^{\frac{r}{2}}\\
&\leq\,c\,\E\Big[1+\|\bfu_0\|_{L^2_x}^2\Big]^{\frac{r}{2}}.
\end{align*}
Finally, we have by (\ref{eq:phi})
\begin{align*}
(III)&=\frac{1}{2}\int_{\mt}\int_0^t \,\dd\Big\langle\Big\langle\int_0^{\cdot}\partial_\gamma \big(\Phi(\bfu)\,\dd W\big)\Big\rangle\Big\rangle\dx\\
&=\frac{1}{2}\sum_{k}\int_{\mt}\int_0^t \,\dd\Big\langle\Big\langle\int_0^{\cdot}\partial_\gamma\big(\Phi(\bfu)e_k\big) \dd\beta_k\Big\rangle\Big\rangle\dx\\
&\leq \frac{1}{2}\sum_{k}\int_0^t\int_{\mt}\big| \nabla_{\bfxi} \bfg_k(\cdot,\bfu)\partial_\gamma\bfu\big|^2\dxs\\
&+\frac{1}{2}\sum_{k}\int_0^t\int_{\mt}\big| \partial_\gamma \bfg_k(\cdot,\bfu)\big|^2\dxs\\
&\leq \,c\,\int_0^t\int_{\mt}|\nabla\bfu|^2\dxs+c\,\int_0^t\int_{\mt}|\bfu|^2\dxs.
\end{align*}
Hence we obtain by 
\eqref{eq:W12} that
\begin{align*}
\E\bigg[\sup_{0\leq t \leq T}|(III)|\bigg]^{\frac{r}{2}}&\leq\,c\,\E\Big[1+\|\bfu_0\|_{L^2_x}^2\Big]^{\frac{r}{2}}.
\end{align*}
Plugging all together and choosing $\delta$ small enough we have shown \eqref{eq:W22}.\\
The proof of (c) is similar: we simply differentiate once more.
We apply It\^{o}'s formula to the function $f^{\beta}(\bfu):=\tfrac{1}{2}\|\partial^\beta\bfu\|_{L^2_x}^2$ where $\beta\in \mathbb N_0^2$ is a multi-index of length 2. We obtain
\begin{align}\nonumber
\frac{1}{2}\|\partial^\beta\bfu(t)\|_{L^2_x}^2
&=\frac{1}{2}\|\partial^\beta\bfu_0\|_{L^2_x}^2+\int_{\mt}\int_0^t \partial^\beta\bfu\cdot\dd\partial^\beta\bfu\dx\\&+\frac{1}{2}\int_{\mt}\int_0^t\dd\Big\langle\!\Big\langle\int_0^{\cdot}\partial^\beta \big(\Phi(\bfu)\,\dd W\big)\Big\rangle\!\Big\rangle\dx
=:(I)+(II)+(III),
\nonumber
\end{align}
where
\begin{align*}
(II)&=-(II)_1+(II)_2-(II)_3,\\
(II)_1&:=\mu\int_0^t\int_{\mt} |\partial^\beta\nabla\bfu|^2\dxs,\\
(II)_2&:=\int_0^t\int_{\mt}\partial^\beta\bfu
\cdot\partial^\beta\Big(\Phi(\bfu)\,\dd W\Big)\dx,\\
(II)_{3}&:=\int_0^t\int_{\mt}\partial^\beta\big((\nabla\bfu)\bfu\big)\cdot\partial^\beta\bfu\dxs.
\end{align*}
The main difference is that $(II)_3$ does not vanish. By \cite[Lemma 2.1.20]{KukShi} (with $m=2$) and Young's inequality we have
\begin{align*}
\bigg|\int_{\mt}\partial^\beta\Div(\bfu\otimes\bfu)\cdot\partial^\beta\bfu\dx\bigg|&\leq\,c\|\bfu\|_{W^{3,2}_x}^{\frac{7}{4}}\|\bfu\|_{W^{1,2}_x}^{\frac{3}{4}}\|\bfu\|_{L^2_x}^{\frac{1}{2}}\\
&\leq\delta\|\bfu\|_{W^{3,2}_x}^2+c(\delta)\|\bfu\|_{W^{1,2}_x}^{6}\|\bfu\|_{L^2_x}^{4}\\
&\leq\delta\|\bfu\|_{W^{3,2}_x}^2+c(\delta)\big(\|\bfu\|_{W^{1,2}_x}^{10}+\|\bfu\|_{L^2_x}^{10}\big),
\end{align*}
where $\delta>0$ is arbitrary. This implies
\begin{align*}
\E\bigg[\sup_{0\leq t\leq T}|(II)_3|\bigg]^{\frac{r}{2}} &\leq\,\delta\E\bigg[\int_0^T\|\bfu\|_{W^{3,2}_x}^2\dt\bigg]^{\frac{r}{2}}+c(\delta)\E\bigg[\sup_{0\leq t\leq T}\|\bfu\|_{W^{1,2}_x}^{2}+\sup_{0\leq t\leq T}\|\bfu\|_{L^2_x}^{2}\bigg]^{\frac{5r}{2}}\\
 &\leq\,\delta\E\bigg[\int_0^T\|\bfu\|_{W^{3,2}_x}^2\dt\bigg]^{\frac{r}{2}}+c(\delta)\E\Big[\|\bfu_0\|_{W^{1,2}_x}\Big]^{\frac{5r}{2}}
\end{align*}
due to \eqref{eq:W22}.
By arguments similar to the proof of (b), using \eqref{eq:phi2}, we gain
\begin{align*}
\E&\bigg[\sup_{0\leq t\leq T}|(II)_2|\bigg]^{\frac{r}{2}}
\\&\leq\delta\,\E\bigg[\sup_{0\leq t\leq T}\|\bfu\|^2_{W_x^{2,2}}\dx\bigg]^{\frac{r}{2}}+\,c(\delta)\,\E\bigg[ \sup_{0\leq t\leq T}\|\bfu\|_{L_x^{2}}^2+\sup_{0\leq t\leq T}\|\bfu\|^2_{W_x^{1,2}}+\int_0^T\|\bfu\|_{W_x^{2,2}}^2\dt\bigg]^{\frac{r}{2}}\\
&\leq\delta\,\E\bigg[\sup_{0\leq t\leq T}\|\bfu\|_{W_x^{2,2}}^2\dx\bigg]^{\frac{r}{2}}+\,c\,\E\Big[1+\|\bfu_0\|_{W^{1,2}_x}^2\Big]^{\frac{r}{2}}
\end{align*}
using again \eqref{eq:W22}. Finally, we get
\eqref{eq:W12} that
\begin{align*}
\E\bigg[\sup_{0\leq t \leq T}|(III)|\bigg]^{\frac{r}{2}}&\leq\,c\,\E\Big[1+\|\bfu_0\|_{W^{1,2}_x}^2\Big]^{\frac{r}{2}}
\end{align*}
arguing again similarly to (b) and using \eqref{eq:phi2}.
Again the claim follows by choosing $\delta$ small enough.
\end{proof}
Under the assumptions of Lemma \ref{lem:reg} (b) equation \eqref{eq:SNS} is satisfied strongly in the analytical sense. That is
we have
\begin{align}\label{eq:strong}
\bfu(t)&=\bfu(0)+\int_0^t\Big[\mu\Delta\bfu-(\nabla\bfu)\bfu
-\nabla\pi_{\mathrm{det}}\Big]\ds+
\int_0^t\big[\Phi(\bfu)+\Phi^\pi\big]\,\dd W
\end{align}
$\p$-a.s. for all $t\in[0,T]$, recall equation \eqref{eq:pressure}. In the following we analyze the regularity of the pressure components $\pi_{\mathrm{det}}$ and $\Phi^\pi$.
In the following the subscript ${w^*}$ denotes Bochner-measurability with respect to the weak$^*$-topology.
\begin{corollary}\label{cor:pressure}
\begin{enumerate}
\item Under the assumptions of Lemma \ref{lem:reg} we have
\begin{align*}
\pi_{\mathrm{det}}\in L^{\frac{r}{2}}(\Omega,L^2(0,T;L^{2}(\mt)),\\
\Phi^\pi\in L^r(\Omega;L^\infty_{w^*}(0,T;L_2(\mathfrak U;L^{2}(\mt)))).
\end{align*} 
\item Under the assumptions of Lemma \ref{lem:reg} (b) we have
\begin{align*}
\pi_{\mathrm{det}}\in L^{\frac{r}{2}}(\Omega,L^2(0,T;W^{1,2}(\mt)),\\
\Phi^\pi\in L^r(\Omega;L^\infty_{w^*}(0,T;L_2(\mathfrak U;W^{1,2}(\mt)))).
\end{align*} 
\item Under the assumptions of Lemma \ref{lem:reg} (c) we have 
\begin{align*}
\pi_{\mathrm{det}}\in L^{\frac{r}{2}}(\Omega,L^2(0,T;W^{2,2}(\mt)),\\
\Phi^\pi\in L^r(\Omega;L^\infty_{w^*}(0,T;L_2(\mathfrak U;W^{2,2}(\mt)))).
\end{align*} 
\end{enumerate}
\end{corollary}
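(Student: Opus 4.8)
Both pressure components are obtained from $\bfu\otimes\bfu$, respectively $\Phi(\bfu)$, by applying a singular integral operator of order zero, so the plan is to combine elementary elliptic bounds for these operators with the pathwise regularity of $\bfu$ from Lemma~\ref{lem:reg}. Write $\mathcal A:=-\Delta^{-1}\Div\Div$ and $\mathcal B:=-\nabla\Delta^{-1}\Div$, so that $\pi_{\mathrm{det}}=\mathcal A(\bfu\otimes\bfu)$ and $\Phi^\pi=\mathcal B\,\Phi(\bfu)$ (the latter acting on each component $\bfg_k(\cdot,\bfu)$). On $\mathbb{T}^2$ these are Fourier multipliers with bounded symbol $\xi_j\xi_k/|\xi|^2$ (extended by $0$ at $\xi=0$), hence bounded on $L^2(\mathbb{T}^2)$ by Plancherel's theorem; since they commute with every $\partial^\alpha$, they are also bounded on $W^{\ell,2}(\mathbb{T}^2)$ and on $L_2(\mathfrak U;W^{\ell,2}(\mathbb{T}^2))$ for each $\ell\in\mathbb N_0$ (passing to the zero-mean representative is harmless, both $\pi_{\mathrm{det}}$ and $\Phi^\pi$ having vanishing spatial mean). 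This gives, for $\ell=0,1,2$,
\begin{align*}
\|\pi_{\mathrm{det}}\|_{W^{\ell,2}_x}\leq c\,\|\bfu\otimes\bfu\|_{W^{\ell,2}_x},\qquad \|\Phi^\pi\|_{L_2(\mathfrak U;W^{\ell,2}_x)}\leq c\,\|\Phi(\bfu)\|_{L_2(\mathfrak U;W^{\ell,2}_x)}.
\end{align*}

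The stochastic part is then immediate. By \eqref{norm1}, \eqref{norm2}, \eqref{norm3} one has $\|\Phi^\pi\|_{L_2(\mathfrak U;W^{\ell,2}_x)}\leq c\,(1+\|\bfu\|_{W^{\ell,2}_x})$ pointwise in $(\omega,t)$, for $\ell=0,1,2$ in cases (a), (b), (c). Taking $\sup_{t\in[0,T]}$, then the $r$th power and expectations, reduces the bound to the $\sup$-in-time term on the left of \eqref{eq:W12}, \eqref{eq:W22}, \eqref{eq:W32} respectively; in case (c) one also uses \eqref{eq:W22} at exponent $r$, which is licit since $\bfu_0\in L^{5r}(\Omega;W^{1,2}_{\mathrm{div}})\subset L^{r}(\Omega;W^{1,2}_{\mathrm{div}})$. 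For the measurability, $\bfu$ is strongly measurable in time with values in $L^2_x$ and $\bfz\mapsto\mathcal B\Phi(\bfz)$ is continuous $L^2_x\to L_2(\mathfrak U;L^2_x)$ by \eqref{eq:phi}, so $t\mapsto\Phi^\pi$ is strongly measurable into $L_2(\mathfrak U;L^2_x)$; for $\ell=1,2$ it is only known to be bounded (not continuous) with values in $L_2(\mathfrak U;W^{\ell,2}_x)$, hence weak-$*$ measurable there --- exactly the asserted $L^\infty_{w^*}$-membership.

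For the deterministic part I would estimate $\|\bfu\otimes\bfu\|_{W^{\ell,2}_x}$ by two-dimensional interpolation, separating in the time integral of $\|\bfu\otimes\bfu\|_{W^{\ell,2}_x}^2$ a $\sup$-in-time factor (controlled by $\sup_t\|\bfu\|_{W^{\ell,2}_x}$) from a factor controlled by $\int_0^T\|\nabla^{\ell+1}\bfu\|_{L^2_x}^2\dt$. For $\ell=0$, Ladyzhenskaya's inequality gives $\|\bfu\otimes\bfu\|_{L^2_x}^2\leq c\|\bfu\|_{L^4_x}^4\leq c\|\bfu\|_{L^2_x}^2\|\nabla\bfu\|_{L^2_x}^2+c\|\bfu\|_{L^2_x}^4$, hence $\int_0^T\|\pi_{\mathrm{det}}\|_{L^2_x}^2\dt\leq cY^2$ where $Y$ denotes the quantity inside the expectation in \eqref{eq:W12}. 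For $\ell=1$, one additionally needs $\|\nabla(\bfu\otimes\bfu)\|_{L^2_x}^2\leq c\|\bfu\|_{L^4_x}^2\|\nabla\bfu\|_{L^4_x}^2$ together with the Gagliardo--Nirenberg inequalities $\|\bfu\|_{L^4_x}^2\leq c\|\bfu\|_{L^2_x}\|\nabla\bfu\|_{L^2_x}+c\|\bfu\|_{L^2_x}^2$ and $\|\nabla\bfu\|_{L^4_x}^2\leq c\|\nabla\bfu\|_{L^2_x}\|\nabla^2\bfu\|_{L^2_x}+c\|\nabla\bfu\|_{L^2_x}^2$, which yield $\int_0^T\|\pi_{\mathrm{det}}\|_{W^{1,2}_x}^2\dt\leq c\,(1+X^2+Y^2)$ with $X$ the quantity inside the expectation in \eqref{eq:W22}. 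For $\ell=2$, the Banach-algebra property of $W^{2,2}(\mathbb{T}^2)$ gives $\|\bfu\otimes\bfu\|_{W^{2,2}_x}\leq c\|\bfu\|_{W^{2,2}_x}^2$, hence $\int_0^T\|\pi_{\mathrm{det}}\|_{W^{2,2}_x}^2\dt\leq cT\sup_t\|\bfu\|_{W^{2,2}_x}^4$. In each case, raising to the power $\tfrac r4$ and taking expectations reduces matters to the $\tfrac r2$th powers of the left-hand sides of Lemma~\ref{lem:reg} (for $\ell=2$ also \eqref{eq:W22} at exponent $r$), giving $\pi_{\mathrm{det}}\in L^{r/2}(\Omega;L^2(0,T;W^{\ell,2}_x))$; measurability of $t\mapsto\bfu\otimes\bfu$ into $W^{\ell,2}_x$ follows from $\bfu\in L^4(0,T;L^4_x)$ for $\ell=0$ and from the higher pathwise bounds for $\ell=1,2$.

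I expect the only genuine work to be the bookkeeping of interpolation exponents and powers of $r$, arranged so that every term closes against the \emph{finite} right-hand sides of Lemma~\ref{lem:reg} without demanding moments of $\bfu_0$ beyond those assumed. This is also why the two components land in $L^{r/2}$, respectively $L^{r}$, in $\omega$: $\bfu\otimes\bfu$ is quadratic in $\bfu$ while $\Phi(\bfu)$ has linear growth. A minor technical point, flagged above, is the weak-$*$ measurability of $\Phi^\pi$, forced by $\bfu$ being only weakly continuous in time with values in $L^2_x$.
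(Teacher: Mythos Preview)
Your proposal is correct and follows essentially the same approach as the paper: both rely on the $W^{\ell,2}$-boundedness of the zero-order operators $-\Delta^{-1}\Div\Div$ and $-\nabla\Delta^{-1}\Div$ (the paper phrases this as continuity of $\Delta^{-1}$), and then control $\bfu\otimes\bfu$ and $\Phi(\bfu)$ via Ladyzhenskaya/Gagliardo--Nirenberg together with Lemma~\ref{lem:reg} and \eqref{norm1}--\eqref{norm3}. The only minor variation is that for part (c) you invoke the Banach-algebra property of $W^{2,2}(\mathbb T^2)$, whereas the paper writes the pointwise bound $|\nabla^2\pi_{\mathrm{det}}|\lesssim |\bfu||\nabla^2\bfu|+|\nabla\bfu|^2$; both routes close against the same estimates.
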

\begin{proof}
The key tool is the continuity of $\Delta^{-1}$ (from $W^{k-2,p}(\mt)\rightarrow W^{k,p}(\mt)$ for all $k\in\mathbb N_0$ and $1<p<\infty$). So, the regularity 
transfers from $\bfu\otimes\bfu$ to $\pid$.
We obtain by Ladyshenskaya's inequality
\begin{align*}
\int_0^T\int_{\mt}|\pi_{\mathrm{det}}|^2\dxt&=\int_0^T\int_{\mt}|\Delta^{-1}\Div\Div(\bfu\otimes\bfu)|^2\dxt\leq\,c\,\int_0^T\int_{\mt}|\bfu|^4\dxt\\
&\leq\,c\,\|\bfu\|^2_{L^\infty_tL^2_x}\|\nabla\bfu\|^2_{L^2_tL^{2}_x}
\leq\,c\,\Big(\|\bfu\|^4_{L^\infty_tL^2_x}+\|\nabla\bfu\|^4_{L^2_tL^{2}_x}\Big)
\end{align*}
such that
\begin{align*}
\E\bigg[\int_0^T\int_{\mt}|\pi_{\mathrm{det}}|^2\dxt\bigg]^{\frac{r}{4}}&
\leq\,c\,\E\Big[\|\bfu\|^{2}_{L^\infty_tL^2_x}+\|\nabla\bfu\|^{2}_{L^2_tL^{2}_x}\Big]^{\frac{r}{2}}<\infty
\end{align*}
using Lemma \ref{lem:reg} (a). The estimates for (b) and (c) are very similar. We have to compare $\nabla\pi$ with $|\bfu||\nabla\bfu|$, where we have
\begin{align*}
|\bfu||\nabla\bfu|\leq|\bfu|^2+|\nabla\bfu|^2\in L^{\frac{r}{2}}(\Omega;L^2(Q))
\end{align*}
under the assumptions of Lemma \ref{lem:reg} (b).
Similarly, $\nabla^2\pi$ behaves as $|\bfu||\nabla^2\bfu|+|\nabla\bfu|^2$
which belongs to the same class since Lemma \ref{lem:reg} (c) applies.\\
Now, we investigate the regularity of $\Phi^\pi$. We use continuity $\Delta^{-1}$
to obtain
\begin{align*}
\E\bigg[\sup_{0\leq t\leq T}\|\Phi^\pi\|_{L_2(\mathfrak U;L^2_x)}^2\bigg]^{\frac{r}{2}}&=\E\bigg[\sup_{0\leq t\leq T}\|\nabla\Delta^{-1}\Div\Phi(\bfu)\|_{L_2(\mathfrak U;L^2_x)}^2\bigg]^{\frac{r}{2}}\\
&=\E\bigg[\sup_{0\leq t\leq T}\bigg[\sum_{k\geq1}\|\nabla\Delta^{-1}\Div\bfg_k(\cdot,\bfu)\|_{L^2_x}^2\bigg]^{\frac{r}{2}}\\
&\leq\,c\,\E\bigg[\sup_{0\leq t\leq T}\sum_{k\geq1}\int_0^T\|\bfg_k(\cdot,\bfu)\|_{L^2_x}^2\bigg]^{\frac{r}{2}}\\
&\leq\,c\,\E\bigg[1+\sup_{0\leq t\leq T}\|\bfu\|_{L^2_x}^2\bigg]^{\frac{r}{2}}<\infty
\end{align*}
using \eqref{eq:phi} and Lemma \ref{lem:reg} (a). Similarly, we have
\begin{align*}
\E\bigg[\sup_{0\leq t\leq T}\|\Phi^\pi\|_{L_2(\mathfrak U;W^{1,2}_x)}^2\bigg]^{\frac{r}{2}}
&\leq\,c\,\E\bigg[\sup_{0\leq t\leq T}\sum_{k\geq1}\|\bfg_k(\cdot,\bfu)\|_{W^{1,2}_x}^2\bigg]^{\frac{r}{2}}\\
&\leq\,c\,\E\bigg[1+\sup_{0\leq t\leq T}\|\bfu\|_{W^{1,2}_x}^2\bigg]^{\frac{r}{2}}<\infty
\end{align*}
using \eqref{eq:phi} and Lemma \ref{lem:reg} (b), as well as
\begin{align*}
\E\bigg[\sup_{0\leq t\leq T}\|\Phi^\pi\|_{L_2(\mathfrak U;W^{2,2}_x)}^2\bigg]^{\frac{r}{2}}
&\leq\,c\,\E\bigg[\sup_{0\leq t\leq T}\sum_{k\geq1}\|\bfg_k(\cdot,\bfu)\|_{W^{2,2}_x}^2\dt\bigg]^{\frac{r}{2}}\\
&\leq\,c\,\E\bigg[1+\sup_{0\leq t\leq T}\|\bfu\|_{W^{2,2}_x}^2\bigg]^{\frac{r}{2}}<\infty
\end{align*}
by \eqref{eq:phi2} and the Lemma \ref{lem:reg} (c).
\end{proof}
Finally, we investigate the time-regularity of the velocity field.

\begin{corollary}\label{cor:uholder}
\begin{enumerate}
\item Let the assumptions of Lemma \ref{lem:reg} (b) be satisfied for some $r>2$. Then we have
\begin{align}
\label{eq:holder}
\E\Big[\|\bfu\|_{C^\alpha([0,T];L^{2}_x)}\Big]^{\frac{r}{2}}<\infty
\end{align}
for all $\alpha<\frac{1}{2}$.
\item Let the assumptions of Lemma \ref{lem:reg} (c) be satisfied for some $r>2$. Then we have
\begin{align}
\label{eq:holder}
\E\Big[\|\bfu\|_{C^\alpha([0,T];W^{1,2}_x)}\Big]^{\frac{r}{2}}<\infty
\end{align}
for all $\alpha<\frac{1}{2}$.
\end{enumerate}
\end{corollary}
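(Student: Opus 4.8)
The plan is to exploit the strong form \eqref{eq:strong} of the equation together with Lemma \ref{lems:Holder} for the stochastic part and direct estimates for the deterministic part. Write, for $0\le s\le t\le T$,
\begin{align*}
\bfu(t)-\bfu(s)=\int_s^t\Big[\mu\Delta\bfu-(\nabla\bfu)\bfu-\nabla\pi_{\mathrm{det}}\Big]\,\dd\sigma+\int_s^t\big[\Phi(\bfu)+\Phi^\pi\big]\,\dd W=:A(s,t)+M(s,t),
\end{align*}
where the identity holds $\p$-a.s. for all $s,t$ by \eqref{eq:strong}. I treat the two summands separately, in the norm $L^2_x$ for part (a) and in the norm $W^{1,2}_x$ for part (b).

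For the drift term $A(s,t)$, I use that for a Bochner integral $\|A(s,t)\|_X\le\int_s^t\|\mu\Delta\bfu-(\nabla\bfu)\bfu-\nabla\pi_{\mathrm{det}}\|_X\,\dd\sigma\le|t-s|^{1/2}\big(\int_0^T\|\cdots\|_X^2\,\dd\sigma\big)^{1/2}$ by Cauchy--Schwarz, which gives Lipschitz-in-time (hence $C^{1/2}$, a fortiori $C^\alpha$ for $\alpha<1/2$) control provided the integrand lies in $L^{r/2}(\Omega;L^2(0,T;X))$. For part (a), $X=L^2_x$: then $\Delta\bfu\in L^2_tL^2_x$ and $\nabla\pi_{\mathrm{det}}\in L^2_tL^2_x$ both follow from Lemma \ref{lem:reg}(b) (the pressure bound being exactly Corollary \ref{cor:pressure}(b), noting $\nabla\pi_{\mathrm{det}}$ behaves as $|\bfu||\nabla\bfu|\in L^{r/2}(\Omega;L^2(Q))$), while the convective term $(\nabla\bfu)\bfu$ is controlled in $L^2_tL^2_x$ by $\|\nabla\bfu\|_{L^2_tL^4_x}\|\bfu\|_{L^\infty_tL^4_x}$ and Ladyzhenskaya's inequality, again using Lemma \ref{lem:reg}(b). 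For part (b), $X=W^{1,2}_x$: now $\Delta\bfu$ needs $L^2_tW^{3,2}_x\hookleftarrow$ Lemma \ref{lem:reg}(c), $\nabla^2\pi_{\mathrm{det}}\in L^2_tL^2_x$ is Corollary \ref{cor:pressure}(c) (it behaves as $|\bfu||\nabla^2\bfu|+|\nabla\bfu|^2$), and the convective term in $W^{1,2}_x$ requires controlling $\nabla\big((\nabla\bfu)\bfu\big)$, which one estimates by interpolation/Sobolev embedding in $2$D against $\|\bfu\|_{L^2_tW^{3,2}_x}$ and $\sup_t\|\bfu\|_{W^{1,2}_x}$ using Lemma \ref{lem:reg}(c) — this is routine but is the place where the higher regularity assumption in (c), including the $L^{5r}(\Omega;W^{1,2}_{\mathrm{div}})$ hypothesis absorbed in \eqref{eq:W32}, is actually needed.

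For the martingale term $M(s,t)=\int_s^t(\Phi(\bfu)+\Phi^\pi)\,\dd W$, I apply Lemma \ref{lems:Holder} with $\psi=\Phi(\bfu)+\Phi^\pi$ and the exponent $r>2$ from the hypotheses. In part (a), $X=L^2_x$: by \eqref{norm1} one has $\|\Phi(\bfu)\|_{L_2(\mathfrak U;L^2_x)}\le c(1+\|\bfu\|_{L^2_x})$, and $\Phi^\pi\in L^r(\Omega;L^\infty_{w^*}(0,T;L_2(\mathfrak U;L^2_x)))$ by Corollary \ref{cor:pressure}(a) (or directly by continuity of $\nabla\Delta^{-1}\Div$ on $L^2_x$), so $\E\big[\sup_{[0,T]}\|\psi\|_{L_2(\mathfrak U;L^2_x)}^2\big]^{r/2}<\infty$ by Lemma \ref{lem:reg}(a); Lemma \ref{lems:Holder} then yields $\E[\|M\|_{C^\alpha([0,T];L^2_x)}^r]<\infty$ for $\alpha\in(1/r,1/2)$, hence (enlarging the probability space / interpolating, or simply noting $r$ can be taken close to $2$ only as needed) the claimed bound with the $r/2$ power for all $\alpha<1/2$. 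In part (b), $X=W^{1,2}_x$: use \eqref{norm2} and Corollary \ref{cor:pressure}(b), together with Lemma \ref{lem:reg}(b), in exactly the same way. Combining the drift and martingale estimates via $\|\bfu(t)-\bfu(s)\|_X\le\|A(s,t)\|_X+\|M(s,t)\|_X$ and taking the $C^\alpha$-seminorm gives \eqref{eq:holder} in both cases.

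The main obstacle is the $W^{1,2}_x$-estimate of the convective term $\nabla\big((\nabla\bfu)\bfu\big)$ in part (b): one must show $\|(\nabla\bfu)\bfu\|_{L^2_tW^{1,2}_x}$ is finite $r/2$-integrably, which forces a careful use of two-dimensional Sobolev embeddings (e.g. $W^{1,2}\hookrightarrow L^p$ for all $p<\infty$, Ladyzhenskaya, and $W^{2,2}\hookrightarrow L^\infty$ in $2$D) to distribute derivatives, and this is precisely why Lemma \ref{lem:reg}(c) — with its full strength including the $W^{1,2}$-in-$L^{5r}(\Omega)$ input — is invoked rather than (b). Everything else is a bookkeeping matter of matching each term of \eqref{eq:strong} to the appropriate part of Lemma \ref{lem:reg}, Corollary \ref{cor:pressure}, and the growth bounds \eqref{norm1}--\eqref{norm3}.
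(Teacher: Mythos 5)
Your proposal is correct and follows essentially the same route as the paper: decompose via the strong form \eqref{eq:strong}, control the Bochner-integral drift through the embedding $W^{1,2}(0,T;X)\hookrightarrow C^{1/2}([0,T];X)$ (your Cauchy--Schwarz computation is exactly this), invoke Lemma \ref{lems:Holder} together with \eqref{norm1}, \eqref{norm2} and Corollary \ref{cor:pressure} for the two stochastic integrals, and feed in Lemma \ref{lem:reg} (b), respectively (c), for the deterministic terms. The only cosmetic slip is calling the $|t-s|^{1/2}$ bound ``Lipschitz''; and the restriction $\alpha\in(1/r,1/2)$ from Lemma \ref{lems:Holder} extends to all $\alpha<1/2$ simply because H\"older seminorms on a bounded interval are monotone in the exponent, so no enlargement or interpolation argument is needed.
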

\begin{proof}
We start with (a) and analyse each term in equation \eqref{eq:strong} separately. Lemma \ref{lem:reg} (b) implies
\begin{align*}
\int_0^\cdot\Delta\bfu\ds\in L^r(\Omega;W^{1,2}(0,T;L^2(\mt))).
\end{align*}
As seen in the proof of Corollary \ref{cor:pressure} $\pid$ and $\bfu\otimes\bfu$ have the same regularity. In particular, Corollary \ref{cor:pressure} (b) yields
\begin{align*}
\int_0^\cdot\big(\Div(\bfu\otimes\bfu)+\nabla\pid\big)\ds\in L^{\frac{r}{2}}(\Omega;W^{1,2}(0,T;L^2(\mt))).
\end{align*}
Finally, we have
\begin{align*}
\int_0^\cdot\Phi(\bfu)\,\dd W\in L^{r}(\Omega;C^{\alpha}([0,T];L^2(\mt))).
\end{align*}
by combing Lemma \ref{lems:Holder} with \eqref{norm1}. The same conclusion holds for $\Phi^\pi$ using Lemma \ref{cor:pressure} (a).
Plugging all together and noting the embedding $W^{1,2}(0,T;X)\hookrightarrow C^{\alpha}([0,T];X)$ for any separably Banach space $X$ the claim follows.\\
The proof of (b) follows along the same lines using the higher regularity from
Lemma \ref{lem:reg} (c), Corollary \ref{cor:pressure} (c) and \eqref{norm2}.
\end{proof}

\subsection{Discretization in space}
We work with a standard finite element set-up for incompressible fluid mechanics, see e.g. \cite{BF} and \cite{GR}.
We denote by $\mathscr{T}_h$ a quasi-uniform subdivision of $\mt$ into triangles of maximal diameter $h>0$.   
For $\mathcal S\subset \setR ^2$ and $\ell\in \setN _0$ we denote by
$\mathscr{P}_\ell(\mathcal S)$ the polynomials on $\mathcal S$ of degree less than or equal
to $\ell$. Moreover, we set $\mathscr{P}_{-1}(\mathcal S):=\set {0}$. Let us
characterize the finite element spaces $V^h(\mt)$ and $P^h(\mt)$ as
\begin{align*}
  V^h(\mt)&:= \set{\bfv_h \in W^{1,2}(\mt)\,:\, \bfv_h|_{\mathcal S}
    \in \mathscr{P}_i(\mathcal S)\,\,\forall \mathcal S\in \mathscr T_h},\\
P^h(\mt)&:=\set{\pi_h \in L^{2}(\mt)\,:\, \pi_h|_{\mathcal S}
    \in \mathscr{P}_j(\mathcal S)\,\,\forall \mathcal S\in \mathscr T_h}.
\end{align*}
We will assume that $i$ and $j$ are both natural to get \eqref{eq:stab'} below (this is different from \cite{CP}, where also $j=0$ is allowed).
In order to guarantee stability of our approximations we relate $V^h(\mt)$ and $P^h(\mt)$ by the inf-sup condition, that is we assume that
\begin{align*}
\sup_{\bfv_h\in V^h(\mt)} \frac{\int_{\mt}\Div\bfv_h\,\pi_h\dx}{\|\nabla\bfv_h\|_{L^2_x}}\geq\,C\,\|\pi_h\|_{L^2_x}\quad\,\forall\pi_h\in P^h(\mt),
\end{align*}
where $C>0$ does not depend on $h$. This gives a relation between $i$ and $j$ (for instance the choice $(i,j)=(1,0)$ is excluded whereas $(i,j)=(2,0)$ is allowed).
Finally, we define the space of discretely solenoidal finite element functions by 
\begin{align*}
  V^h_{\Div}(\mt)&:= \bigg\{\bfv_h\in V^h(\mt):\,\,\int_{\mt}\Div\bfv_h\,\,\pi_h\dx=0\,\forall\pi_h\in P^h(\mt)\bigg\}.
\end{align*}
Let $\Pi_h:L^2(\mt)\rightarrow V_{\Div}^h(\mt)$ be the $L^2(\mt)$-orthogonal projection onto $V_{\Div}^h(\mt)$. The following results concerning the approximability of $\Pi_h$ are well-known (see, for instance \cite{HR}). There is $c>0$ independent of $h$ such that we have
  \begin{align}
    \label{eq:stab}
 \int_{\mt} \Big|\frac{\bfv-\Pi_h \bfv}{h}\Big|^2\dx+ \int_{\mt} \abs{\nabla\bfv-\nabla\Pi_h \bfv}^2\dx &\leq
    \,c\, \int_{\mt} \abs{\nabla
      \bfv}^2\dx
  \end{align}
for all $\bfv\in W^{1,2}_{\Div}(\mt)$ and
  \begin{align}
    \label{eq:stab'}
 \int_{\mt} \Big|\frac{\bfv-\Pi_h \bfv}{h}\Big|^2\dx+ \int_{\mt} \abs{\nabla\bfv-\nabla\Pi_h \bfv}^2\dx &\leq
    \,c\,h^2 \int_{\mt} \abs{\nabla^2
      \bfv}^2\dx
  \end{align}
for all $\bfv\in W^{2,2}_{\Div}(\mt)$. Similarly, if $\Pi_h^\pi:L^2(\mt)\rightarrow P^h(\mt)$ denotes the $L^2(\mt)$-orthogonal projection onto $P^h(\mt)$, we have
\begin{align}
\label{eq:stabpi}
 \int_{\mt} \Big|\frac{p-\Pi_h^\pi p}{h}\Big|^2\dx &\leq
    \,c\, \int_{\mt} \abs{\nabla
      p}^2\dx
\end{align}
for all $p\in W^{1,2}(\mt)$ and
\begin{align}
\label{eq:stabpi'}
 \int_{\mt} \Big|\frac{p-\Pi_h^\pi p}{h}\Big|^2\dx &\leq
    \,c\,h^2 \int_{\mt} \abs{\nabla^2
      p}^2\dx
\end{align}
for all $p\in W^{2,2}(\mt)$. Note that \eqref{eq:stabpi'} requires the assumption $j\geq1$ in the definition of $P^h(\mt)$, whereas \eqref{eq:stabpi} also holds for $j=0$.

\section{Time-discretization}
We consider an equidistant partition of $[0,T]$ with mesh size $\Delta t=T/M$ and set $t_m=m\Delta t$. 
Let $\bfu_0$ be an $\mathfrak F_0$-mesureable random variable with values in $W^{1,2}_{\Div}(\mt)$. We aim at constructing iteratively a sequence of $\mathfrak F_{t_m}$-measurable random variables $\bfu_{m}$ with values in $W^{1,2}_{\Div}(\mt)$ such that
for every $\bfphi\in W^{1,2}_{\Div}(\mt)$ it holds true $\p$-a.s.
\begin{align}\label{tdiscr}
\begin{aligned}
\int_{\mt}&\bfu_{m}\cdot\bfvarphi \dx +\Delta t\bigg(\int_{\mt}(\nabla\bfu_m)\bfu_{m}\cdot\bfphi\dx+\mu\int_{\mt}\nabla\bfu_{m}:\nabla\bfphi\dx\bigg)\\
&\qquad=\int_{\mt}\bfu_{m-1}\cdot\bfvarphi \dx+\int_{\mt}\Phi(\bfu_{m-1})\,\Delta_mW\cdot \bfvarphi\dx,
\end{aligned}
\end{align}
where $\Delta_m W=W(t_m)-W(t_{m-1})$. The existence of a unique $\bfu_m$ (given $\bfu_m$ and $\Delta_m W$) solving \eqref{tdiscr} is straightforward as it is a stationary Navier--Stokes system. The following result follows from Lemma 3.1 in \cite{CP}.
\begin{lemma}\label{lemma:3.1} 
Assume that $\bfu_0\in L^{2^q}(\Omega,W^{1,2}_{div}(\mt))$ for some $1\leq q<\infty$. Suppose that $\Phi$ satisfies \eqref{eq:phi}. Then the iterates $(\bfu_m)_{m=1}^M$ given by \eqref{tdiscr}
are $\mathfrak F_{t_m}$-measurable. Moreover, the following estimates hold uniformly in $M$:
\begin{align}
\label{lem:3.1a}\E\bigg[\max_{1\leq m\leq M}\|\bfu_m\|^{2^q}_{W^{1,2}_x}+\Delta t\sum_{k=1}^M\|\bfu_m\|_{W^{1,2}_x}^{2^q-2}\|\nabla^2\bfu_m\|^2_{L^2_x}\bigg]&\leq\,c(q,T,\bfu_0),\\
\label{lem:3.1b}\E\bigg[\sum_{k=1}^M\|\bfu_m-\bfu_{m-1}\|^2_{W^{1,2}_x}\|\nabla\bfu_m\|^2_{L^2_x}\bigg]&\leq\,c(T,\bfu_0),\\
\label{lem:3.1c}\E\bigg[\bigg(\sum_{k=1}^M\|\bfu_m-\bfu_{m-1}\|^2_{W^{1,2}_x}\bigg)^4+\bigg(\Delta t\sum_{k=1}^M\|\nabla\bfu_m\|^2_{L^2_x}\bigg)^4\bigg]&\leq\,c(T,\bfu_0).
\end{align}
\end{lemma}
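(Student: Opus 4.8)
The plan is to treat this as the time-discrete counterpart of Lemma~\ref{lem:reg}, so the argument essentially reproduces \cite[Lemma 3.1]{CP} (with the higher moments handled as in \cite{BCP}); let me indicate the steps. First I would settle measurability by induction in $m$: given the $\mf_{t_{m-1}}$-measurable $\bfu_{m-1}$ and the increment $\Delta_m W$, equation~\eqref{tdiscr} is a stationary Navier--Stokes system whose unique solution depends Borel-measurably on $(\bfu_{m-1},\Delta_m W)$ (uniqueness together with continuity of the solution map along a Galerkin approximation), so $\bfu_m$ is $\mf_{t_m}$-measurable, being a Borel function of the $\mf_{t_{m-1}}$-measurable $\bfu_{m-1}$ and the $\mf_{t_m}$-measurable $\Delta_m W$. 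Since $W^{1,2}(\mt)\hookrightarrow L^p(\mt)$ for every $p<\infty$ in two dimensions, a short bootstrap gives $(\nabla\bfu_m)\bfu_m\in L^2(\mt)$ and then $\bfu_m\in W^{2,2}_{\Div}(\mt)$ $\p$-a.s.\ by elliptic regularity, so that \eqref{tdiscr} holds pointwise with an associated pressure $\pi_m$.

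Next I would derive the $L^2$-level estimate by choosing $\bfphi=\bfu_m$ in~\eqref{tdiscr}. Using $\int_{\mt}(\nabla\bfu_m)\bfu_m\cdot\bfu_m\dx=0$ and the identity $(\bfu_m-\bfu_{m-1})\cdot\bfu_m=\tfrac12(|\bfu_m|^2-|\bfu_{m-1}|^2+|\bfu_m-\bfu_{m-1}|^2)$ this gives
\begin{align*}
\tfrac12\|\bfu_m\|_{L^2_x}^2-\tfrac12\|\bfu_{m-1}\|_{L^2_x}^2+\tfrac12\|\bfu_m-\bfu_{m-1}\|_{L^2_x}^2+\mu\,\Delta t\,\|\nabla\bfu_m\|_{L^2_x}^2=\int_{\mt}\Phi(\bfu_{m-1})\,\Delta_m W\cdot\bfu_m\dx.
\end{align*}
The key point is that the right-hand side is \emph{not} a martingale increment, so I would split it as $D_m+R_m$ with $D_m:=\int_{\mt}\Phi(\bfu_{m-1})\,\Delta_m W\cdot\bfu_{m-1}\dx$, which satisfies $\E[D_m\mid\mf_{t_{m-1}}]=0$ and, by~\eqref{norm1} and the independence of $\Delta_m W$ from $\mf_{t_{m-1}}$,
\begin{align*}
\E\big[D_m^2\bigm|\mf_{t_{m-1}}\big]\leq c\,\Delta t\,\big(1+\|\bfu_{m-1}\|_{L^2_x}^2\big)\|\bfu_{m-1}\|_{L^2_x}^2,
\end{align*}
while $R_m:=\int_{\mt}\Phi(\bfu_{m-1})\,\Delta_m W\cdot(\bfu_m-\bfu_{m-1})\dx\leq\tfrac14\|\bfu_m-\bfu_{m-1}\|_{L^2_x}^2+c\,\|\Phi(\bfu_{m-1})\,\Delta_m W\|_{L^2_x}^2$, the first term being absorbed on the left and the conditional expectation of the second being $\leq c\,\Delta t\,(1+\|\bfu_{m-1}\|_{L^2_x}^2)$. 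Summing over $m$, taking $\max_{1\leq n\leq M}$, raising to the power $2^{q-1}$, applying the discrete Burkholder--Davis--Gundy inequality to $\sum_m D_m$ and a discrete Gronwall lemma (iterated in $q$) yields the $L^2$-version of~\eqref{lem:3.1a}, namely $\E[\max_m\|\bfu_m\|_{L^2_x}^{2^q}+\Delta t\sum_m\|\bfu_m\|_{L^2_x}^{2^q-2}\|\nabla\bfu_m\|_{L^2_x}^2]\leq c(q,T,\bfu_0)$, along with a bound on $\E[(\sum_m\|\bfu_m-\bfu_{m-1}\|_{L^2_x}^2)^{2^{q-1}}]$.

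For~\eqref{lem:3.1a} itself I would then test the (now pointwise, pressure-eliminated) equation with $-\Delta\bfu_m\in L^2(\mt)$: the pressure drops out, the convective term again vanishes in two dimensions ($\int_{\mt}(\nabla\bfu_m)\bfu_m\cdot\Delta\bfu_m\dx=0$, the identity already used in the proof of Lemma~\ref{lem:reg}), and the same manipulation as above, now with~\eqref{norm2}, gives
\begin{align*}
\tfrac12\|\nabla\bfu_m\|_{L^2_x}^2-\tfrac12\|\nabla\bfu_{m-1}\|_{L^2_x}^2+\tfrac14\|\nabla(\bfu_m-\bfu_{m-1})\|_{L^2_x}^2+\mu\,\Delta t\,\|\nabla^2\bfu_m\|_{L^2_x}^2\leq\widetilde D_m+c\,\big\|\nabla\big(\Phi(\bfu_{m-1})\,\Delta_m W\big)\big\|_{L^2_x}^2,
\end{align*}
with $\widetilde D_m$ a martingale increment controlled through~\eqref{norm2} (here $\|\Delta\bfu_m\|_{L^2_x}=\|\nabla^2\bfu_m\|_{L^2_x}$ on $\mt$). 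Multiplying by suitable powers of $\|\bfu_m\|_{W^{1,2}_x}$ and using convexity (the discrete It\^o iteration), keeping the dissipation $\Delta t\,\|\bfu_m\|_{W^{1,2}_x}^{2^q-2}\|\nabla^2\bfu_m\|_{L^2_x}^2$ and the increments $\sum_m\|\nabla(\bfu_m-\bfu_{m-1})\|_{L^2_x}^2$, and combining with the $L^2$-level estimate, yields~\eqref{lem:3.1a} (note $\|\bfu_m-\bfu_{m-1}\|_{W^{1,2}_x}^2=\|\bfu_m-\bfu_{m-1}\|_{L^2_x}^2+\|\nabla(\bfu_m-\bfu_{m-1})\|_{L^2_x}^2$, both now under control). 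Then~\eqref{lem:3.1b} follows from $\sum_m\|\bfu_m-\bfu_{m-1}\|_{W^{1,2}_x}^2\|\nabla\bfu_m\|_{L^2_x}^2\leq\big(\sum_m\|\bfu_m-\bfu_{m-1}\|_{W^{1,2}_x}^2\big)\big(\max_m\|\nabla\bfu_m\|_{L^2_x}^2\big)$ and H\"older's inequality in $\omega$, each factor being bounded in $L^p(\Omega)$ by the summed $W^{1,2}$-inequality and~\eqref{lem:3.1a}; and~\eqref{lem:3.1c} follows from $(\Delta t\sum_m\|\nabla\bfu_m\|_{L^2_x}^2)^4\leq T^4\max_m\|\nabla\bfu_m\|_{L^2_x}^8$ (covered by~\eqref{lem:3.1a} with $q=3$) together with the fourth moment of $\sum_m\|\bfu_m-\bfu_{m-1}\|_{W^{1,2}_x}^2$, obtained by raising the summed $W^{1,2}$-inequality to the fourth power before taking expectations (this requires sufficiently many moments of $\bfu_0$, e.g.\ $q\geq3$). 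The main obstacle I anticipate is the bookkeeping around the explicit-in-time noise $\Phi(\bfu_{m-1})\,\Delta_m W$: at each regularity level, and at each power of the discrete It\^o iteration, one has to re-isolate the genuine martingale part (tested against the $\mf_{t_{m-1}}$-measurable quantity) and absorb the remainder into the numerical dissipation $\|\bfu_m-\bfu_{m-1}\|$ produced by the implicit Euler step before the discrete Burkholder--Davis--Gundy inequality becomes applicable; carrying the increment terms with the correct weights through this iteration is precisely what makes the sharper estimates~\eqref{lem:3.1b} and~\eqref{lem:3.1c} available.
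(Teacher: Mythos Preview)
The paper does not give its own proof of this lemma; it simply cites \cite[Lemma~3.1]{CP}. Your proposal is an accurate outline of precisely that argument (testing with $\bfu_m$ and $-\Delta\bfu_m$, using the two-dimensional cancellation $\int_{\mt}(\nabla\bfu_m)\bfu_m\cdot\Delta\bfu_m\dx=0$, splitting the stochastic term into a martingale increment against $\bfu_{m-1}$ plus a remainder absorbed by the numerical dissipation, and iterating in $q$ via discrete Burkholder--Davis--Gundy and Gronwall), so there is nothing to compare: your approach and the paper's (by reference) coincide.
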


Now, for $\bfphi\in W^{1,2}(\mt)$ we can insert $\bfphi-\nabla\Delta^{-1}\Div\bfphi\in W^{1,2}_{\Div}(\mt)$ in \eqref{tdiscr}
and obtain
\begin{align}
\nonumber
\int_{\mt}\bfu_m\cdot\bfvarphi\dx &+\Delta t\bigg(\int_{\mt}(\nabla\bfu_m)\bfu_{m}\cdot\bfphi\dx+\mu\int_{\mt}\nabla\bfu_{m}:\nabla\bfphi\dx\bigg)\\
\label{eq:pressure}&=\int_{\mt}\bfu_{m-1}\cdot\bfvarphi\dx
+\Delta t\int_{\mt}\pi_m^{\mathrm{det}}\,\Div\bfphi\dx
\\
\nonumber&+\int_{\mt}\Phi(\bfu_{m-1})\,\Delta_m W\cdot \bfvarphi\dx+\int_{\mt}\int_0^t\Phi^\pi_{m-1}\,\Delta_m W\cdot \bfvarphi\dx,
\end{align}
where 
\begin{align*}
\pi_m^{\mathrm{det}}&=-\Delta^{-1}\Div\Div\big(\bfu_m\otimes\bfu_m\big),\\
\Phi_{m-1}^\pi&=-\nabla\Delta^{-1}\Div\Phi(\bfu_{m-1}).
\end{align*}
\begin{lemma}\label{lemma:3.2}
Assume that $\bfu_0\in L^{8}(\Omega,W^{1,2}_{div}(\mt))$ and that $\Phi$ satisfies \eqref{eq:phi}.
For all $m\in\{1,...,M\}$ the random variable $\pi_m^{\mathrm{det}}$ is $\mathfrak F_{t_m}$-measureable, has values in $W^{1,2}(\mt)$ and we have
\begin{align*}
\E\bigg[\Delta t \sum_{m=1}^M\big\|\nabla \pi_m^{\mathrm{det}}\big\|_{L^2_x}^2\bigg]\leq \,C
\end{align*}
uniformly in $\Delta t$. 
\end{lemma}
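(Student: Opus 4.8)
The plan is to exploit the fact that, by continuity of $\Delta^{-1}$ (from $W^{-1,2}(\mt)$ to $W^{1,2}(\mt)$), the regularity of $\nabla\pi_m^{\mathrm{det}}$ is controlled by the regularity of $\Div(\bfu_m\otimes\bfu_m)$ in $L^2_x$. More precisely, since $\pi_m^{\mathrm{det}}=-\Delta^{-1}\Div\Div(\bfu_m\otimes\bfu_m)$, we have the pointwise-in-$(\omega,m)$ bound
\begin{align*}
\|\nabla\pi_m^{\mathrm{det}}\|_{L^2_x}\leq c\,\|\Div(\bfu_m\otimes\bfu_m)\|_{L^2_x}\leq c\,\|\,|\bfu_m|\,|\nabla\bfu_m|\,\|_{L^2_x}.
\end{align*}
(Here I use $\Div(\bfu_m\otimes\bfu_m)=(\nabla\bfu_m)\bfu_m$ since $\Div\bfu_m$ is, morally, small; in any case the product estimate is the point.) The measurability and $W^{1,2}(\mt)$-valuedness of $\pi_m^{\mathrm{det}}$ follow at once: $\bfu_m$ is $\mathfrak F_{t_m}$-measurable with values in $W^{1,2}_{\Div}(\mt)$ by Lemma \ref{lemma:3.1}, hence $\bfu_m\otimes\bfu_m$ is $\mathfrak F_{t_m}$-measurable with values in $W^{1,1}$, and applying the (deterministic, continuous) operator $-\Delta^{-1}\Div\Div$ preserves measurability and lands in $W^{1,2}$ by elliptic regularity once we know $|\bfu_m||\nabla\bfu_m|\in L^2_x$, which holds $\p$-a.s.

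For the quantitative estimate I would apply the two-dimensional Ladyzhenskaya/Gagliardo--Nirenberg inequality to interpolate the product. Concretely,
\begin{align*}
\|\,|\bfu_m|\,|\nabla\bfu_m|\,\|_{L^2_x}^2=\int_{\mt}|\bfu_m|^2|\nabla\bfu_m|^2\dx\leq \|\bfu_m\|_{L^4_x}^2\|\nabla\bfu_m\|_{L^4_x}^2,
\end{align*}
and then Ladyzhenskaya gives $\|\bfu_m\|_{L^4_x}^2\leq c\|\bfu_m\|_{L^2_x}\|\bfu_m\|_{W^{1,2}_x}$ and $\|\nabla\bfu_m\|_{L^4_x}^2\leq c\|\nabla\bfu_m\|_{L^2_x}\|\nabla\bfu_m\|_{W^{1,2}_x}\leq c\|\bfu_m\|_{W^{1,2}_x}\|\bfu_m\|_{W^{2,2}_x}$. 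Summing,
\begin{align*}
\Delta t\sum_{m=1}^M\|\nabla\pi_m^{\mathrm{det}}\|_{L^2_x}^2\leq c\,\Delta t\sum_{m=1}^M\|\bfu_m\|_{W^{1,2}_x}^{3}\|\bfu_m\|_{W^{2,2}_x}\,\big(1+\|\bfu_m\|_{L^2_x}^{?}\big),
\end{align*}
and here one matches the exponents to the weighted quantity $\Delta t\sum_m\|\bfu_m\|_{W^{1,2}_x}^{2^q-2}\|\nabla^2\bfu_m\|_{L^2_x}^2$ appearing in \eqref{lem:3.1a}: by Cauchy--Schwarz in the $m$-sum,
\begin{align*}
\Delta t\sum_{m=1}^M\|\bfu_m\|_{W^{1,2}_x}^{3}\|\bfu_m\|_{W^{2,2}_x}\leq \Big(\Delta t\sum_{m=1}^M\|\bfu_m\|_{W^{1,2}_x}^{2}\|\bfu_m\|_{W^{2,2}_x}^{2}\Big)^{1/2}\Big(\Delta t\sum_{m=1}^M\|\bfu_m\|_{W^{1,2}_x}^{4}\Big)^{1/2},
\end{align*}
so taking expectations and applying Cauchy--Schwarz once more in $\omega$, the first factor is bounded using \eqref{lem:3.1a} with $q=2$ (which needs $\bfu_0\in L^4(\Omega;W^{1,2}_{\Div})$) and the second using \eqref{lem:3.1a} with $q=3$ (needing $\bfu_0\in L^8(\Omega;W^{1,2}_{\Div})$); this is precisely the hypothesis $\bfu_0\in L^8(\Omega;W^{1,2}_{\Div}(\mt))$ in the statement. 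The constant $C$ is then uniform in $\Delta t$ because the bounds in Lemma \ref{lemma:3.1} are uniform in $M$.

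I expect the only genuinely delicate point to be the bookkeeping of the exponents: making sure that the power of $\|\bfu_m\|_{W^{1,2}_x}$ produced by the Ladyzhenskaya interpolation together with the single factor $\|\nabla^2\bfu_m\|_{L^2_x}$ fits inside the weighted sum \eqref{lem:3.1a} for an admissible $q$ (a power of $2$), and that the accompanying low-order factors are absorbed by the maximum bound $\E[\max_m\|\bfu_m\|_{W^{1,2}_x}^{2^q}]$ — which is exactly why the $L^8(\Omega)$ assumption on $\bfu_0$, rather than merely $L^4$, is needed. Everything else (continuity of $\Delta^{-1}$, measurability, Ladyzhenskaya) is standard and deterministic in character. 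Note that, unlike the continuous pressure $\pi_{\mathrm{det}}$ of Corollary \ref{cor:pressure}(a)--(b), here no time-regularity of the discrete pressure is claimed or needed; only the summed spatial $W^{1,2}$-bound enters the error analysis of Section 4.
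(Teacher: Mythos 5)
Your proposal is correct and follows essentially the same route as the paper: continuity of $\nabla\Delta^{-1}\Div$ on $L^2(\mt)$, Ladyzhenskaya's inequality to bound $\|\bfu_m\|_{L^4_x}^2\|\nabla\bfu_m\|_{L^4_x}^2$ by $\|\bfu_m\|_{L^2_x}\|\nabla\bfu_m\|_{L^2_x}^2\|\nabla^2\bfu_m\|_{L^2_x}$, and then the weighted bounds of Lemma \ref{lemma:3.1}. The paper splits the product by Young's inequality into $\|\bfu_m\|_{L^2_x}^2+\|\nabla\bfu_m\|_{L^2_x}^4\|\nabla^2\bfu_m\|_{L^2_x}^2$ and invokes \eqref{lem:3.1a} directly (with $q=3$, whence the $L^8(\Omega)$ hypothesis), whereas you use Cauchy--Schwarz in $m$ and $\omega$ -- a cosmetic difference; note only that your second factor $\E\big[\Delta t\sum_m\|\bfu_m\|_{W^{1,2}_x}^4\big]$ already follows from $q=2$, so your exponent bookkeeping is slightly more conservative than necessary.
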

\begin{remark}
A corresponding result is shown in \cite[Lemma 3.2]{CP} for the full pressure provided the noise is divergence-free (in this case $\Phi_m^\pi$ vanishes). This is quite restrictive as it means the $\Phi$ is linear in $\bfu$.
\end{remark}
\begin{proof}
The $\mathfrak F_{t_m}$-measurability of $\pi_m^{\mathrm{det}}$ follows directly from the one of $\bfu_m$ stated in Lemma \ref{lemma:3.1}.
By continuity of the operator $\nabla\Delta^{-1}\Div$ on $L^2(\mt)$ we have
\begin{align*}
\big\|\nabla \pi_m^{\mathrm{det}}\big\|_{L^2_x}^2&\leq\,c\,\big\|\Div(\bfu_m\otimes\bfu_m)\big\|^2_{L^2_x}\leq\,c\,\|\bfu_m\|_{{L^4_x}}^2\|\nabla\bfu_m\|_{{L^4_x}}^2\\
&\leq\,c\,\|\bfu_m\|_{{L^2_x}}\|\nabla\bfu_m\|_{{L^2_x}}^2\|\nabla^2\bfu_m\|_{{L^2_x}}\leq\,c\,\|\bfu_m\|_{{L^2_x}}^2+c\,\|\nabla\bfu_m\|_{{L^2_x}}^4\|\nabla^2\bfu_m\|_{{L^2_x}}^2
\end{align*}
$\p$-a.s. using also Ladyshenskaya's inequality $\|v\|_{L^4_x}^2\leq\,c\,\|v\|_{L^2_x}\|\nabla v\|_{L^2_x}$ which holds in two-dimensions. Now, summing with respect to $m$, applying expectations and using Lemma \ref{lemma:3.1} yields the claim.
\end{proof}
\begin{lemma}\label{lemma:3.3}
For all $m\in\{1,...,M\}$ the random variable $\Phi^\pi_{m}$ is $\mathfrak F_{t_m}$-measureable, has values in $L_2(\mathfrak U;W^{1,2}(\mt))$ and we have
\begin{align*}
\E\bigg[\Delta t \sum_{m=1}^M\big\|\Phi^\pi_m\big\|_{L_2(\mathfrak U;W^{1,2}_x)}^2\bigg]\leq \,C
\end{align*}
uniformly in $\Delta t$. 
\end{lemma}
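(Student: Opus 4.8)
The strategy mirrors the treatment of $\Phi^\pi$ in the continuous setting (see the proof of Corollary \ref{cor:pressure}), but now everything happens at the level of the time-discrete iterates, so no time-regularity issue arises — the claim is a pointwise (in $m$) estimate summed with the weight $\Delta t$. First I would record that $\Phi^\pi_m = -\nabla\Delta^{-1}\Div\,\Phi(\bfu_m)$ is $\mathfrak F_{t_m}$-measurable because it is a deterministic (linear, continuous) operator applied to $\bfu_m$, which is $\mathfrak F_{t_m}$-measurable by Lemma \ref{lemma:3.1}; and that it takes values in $L_2(\mathfrak U; W^{1,2}(\mt))$ because $\Phi(\bfu_m) \in L_2(\mathfrak U; W^{1,2}(\mt))$ by \eqref{norm2} (using $\bfu_m \in W^{1,2}(\mt)$) and $\nabla\Delta^{-1}\Div$ preserves this class.

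The core estimate is the operator-norm bound
\begin{align*}
\big\|\Phi^\pi_m\big\|_{L_2(\mathfrak U;W^{1,2}_x)}^2 = \sum_{k\geq1}\|\nabla\Delta^{-1}\Div\,\bfg_k(\cdot,\bfu_m)\|_{W^{1,2}_x}^2 \leq c\sum_{k\geq1}\|\bfg_k(\cdot,\bfu_m)\|_{W^{1,2}_x}^2 \leq c\big(1+\|\bfu_m\|_{W^{1,2}_x}^2\big)
\end{align*}
$\p$-a.s., where the first inequality uses continuity of $\nabla\Delta^{-1}\Div$ from $W^{k,2}(\mt)$ to $W^{k+1,2}(\mt)$ (here with $k=0$, applied componentwise), and the second is exactly \eqref{norm2}, i.e. the consequence of \eqref{eq:phi}. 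Multiplying by $\Delta t$, summing over $m=1,\dots,M$, taking expectations, and invoking \eqref{lem:3.1a} with $q=1$ (which controls $\E[\max_m \|\bfu_m\|_{W^{1,2}_x}^2]$, hence also $\E[\Delta t\sum_m \|\bfu_m\|_{W^{1,2}_x}^2]$ since $\Delta t\, M = T$) yields
\begin{align*}
\E\bigg[\Delta t\sum_{m=1}^M \big\|\Phi^\pi_m\big\|_{L_2(\mathfrak U;W^{1,2}_x)}^2\bigg] \leq c\,\E\bigg[T + \Delta t\sum_{m=1}^M \|\bfu_m\|_{W^{1,2}_x}^2\bigg] \leq c\big(T + \E\big[\max_{1\leq m\leq M}\|\bfu_m\|_{W^{1,2}_x}^2\big]\big) \leq C,
\end{align*}
with $C = C(T,\bfu_0)$ independent of $\Delta t$. (Note that $q=1$ in Lemma \ref{lemma:3.1} only requires $\bfu_0 \in L^2(\Omega, W^{1,2}_{\Div}(\mt))$, which is implied by the standing assumptions.)

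There is really no serious obstacle here: the lemma is a routine consequence of the continuity of the Helmholtz-type projection on $W^{1,2}$, the growth bound \eqref{norm2} for $\Phi$, and the already-established a priori estimate \eqref{lem:3.1a}. If anything, the only point deserving care is making sure the summability in $k$ is handled correctly — i.e. that one applies \eqref{eq:phi}/\eqref{norm2} to the full Hilbert–Schmidt norm rather than term by term — but this is immediate from the definition of the $L_2(\mathfrak U; W^{1,2}_x)$-norm as the $\ell^2$-sum over $k$ of the $W^{1,2}_x$-norms of the $\bfg_k(\cdot,\bfu_m)$.
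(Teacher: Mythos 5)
Your proposal is correct and follows essentially the same route as the paper: continuity of $\nabla\Delta^{-1}\Div$ on $W^{1,2}(\mt)$ applied componentwise to the Hilbert--Schmidt norm, the growth bound \eqref{norm2}, and then summation over $m$ combined with the a priori estimate of Lemma \ref{lemma:3.1}. The measurability and $q=1$ remarks are fine and consistent with what the paper leaves implicit.
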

\begin{proof}
As for Lemma \ref{lemma:3.2} the proof mainly relies on the continuity of $\nabla\Delta^{-1}\Div$ on $L^2(\mt)$. Here, we have by \eqref{norm2}
\begin{align*}
\|\Phi^\pi_m\|^2_{L_2(\mathfrak U;W^{1,2}_x)}&=\sum_{k\geq1}\|\nabla\Delta^{-1}\Div\big(\Phi(\bfu_m)e_k\big)\|^2_{W^{1,2}_x}\\
&\leq\,c\,\sum_{k\geq1}\|\Phi(\bfu_m)e_k\|^2_{W^{1,2}_x}=c\,\|\Phi(\bfu_m)\|^2_{L_2(\mathfrak U;W^{1,2}_x)}\leq\,c\,\big(1+\|\nabla\bfu_m\|^2_{W^{1,2}_x}\big).
\end{align*}
Summing over $m$, applying expectations and using Lemma \ref{lemma:3.1}
finishes the proof.
\end{proof}
Following \cite{CP} we set for $\varepsilon>0$
\begin{align}\label{eq:ODT}
\Omega^\varepsilon_{\Delta t}=\Big\{\omega\in\Omega\Big|\max_{1\leq m\leq M}\|\nabla \bfu_m\|^2_{L^2_x}\leq -\varepsilon\log (\Delta t)\Big\}
\end{align}
such that
\begin{align*}
\p\big(\Omega_{\Delta t}^\varepsilon\big)\geq 1-\frac{\E\big[\max_{1\leq m\leq M}\|\nabla \bfu_m\|_{L^2_x}^2\big]}{-\varepsilon\log (\Delta t)}\geq1+\frac{C}{\varepsilon\log \Delta t}
\end{align*}
using Lemma \ref{lemma:3.1}. We obtain the following result.
\begin{theorem}\label{thm:3.1}
Assume that \eqref{eq:phi} holds and that $\bfu_0\in L^{8}(\Omega,W^{1,2}_{div}(\mt))$ is an $\mathfrak F_0$-measureable random variable. Let $\bfu$ be the unique strong solution
to \eqref{eq:SNS} in the sense of Definition \ref{def:inc2d}. Assume that we have
\begin{align}
\E\Big[\|\bfu\|_{C^\alpha([0,T];L^{4}_x)}^4\Big]<\infty,\quad
\E\Big[\|\bfu\|_{C^\alpha([0,T];W^{1,2}_x)}^2\Big]<\infty,\label{eq:holder'}
\end{align}
for some $\alpha\in(0,\tfrac{1}{2})$; recall Corollary \ref{cor:uholder}. Let $(\bfu_m)_{m=1}^M$ be the solution to \eqref{tdiscr}. Then we have the error estimate
\begin{align}\label{eq:thm:4}
\begin{aligned}
\E\bigg[\mathbf{1}_{\Omega_{\Delta t}^\varepsilon}\bigg(\max_{1\leq m\leq M}\|\bfu(t_m)-\bfu_{m}\|^2_{L^2_x}&+\Delta t\sum_{m=1}^M  \|\nabla\bfu(t_m)-\nabla\bfu_{m}\|^2_{L^2_x}\bigg)\bigg]\leq \,c_\varepsilon\,(\Delta t)^{2\alpha-\varepsilon}
\end{aligned}
\end{align}
for any $\varepsilon>0$.
\end{theorem}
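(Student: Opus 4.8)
The plan is to compare the continuous solution, evaluated at the grid points $t_m$, with the time-discrete iterates $\bfu_m$ by deriving a discrete energy identity for the error $\bfe_m:=\bfu(t_m)-\bfu_m$ and running a discrete Gronwall argument on the truncated set $\Omega_{\Delta t}^\varepsilon$. First I would write, for each $m$, the equation satisfied by $\bfu(t_m)-\bfu(t_{m-1})$ using the strong form \eqref{eq:strong} (valid under Lemma \ref{lem:reg}(b)) and subtract the scheme \eqref{tdiscr} tested against a divergence-free $\bfphi$; the key point is that both the pressure $\pid$ and its discrete analogue $\pi_m^{\mathrm{det}}$, and likewise $\Phi^\pi$ and $\Phi^\pi_{m-1}$, drop out when testing against $\bfphi\in W^{1,2}_{\Div}(\mt)$, so the error equation only involves the viscous term, the convective term, the stochastic increment $\Phi(\bfu)\,\dd W$ versus $\Phi(\bfu_{m-1})\Delta_m W$, and the time-discretization defects
\[
\bfr_m:=\bfu(t_m)-\bfu(t_{m-1})-\int_{t_{m-1}}^{t_m}\partial_t\bfu\ds
\]
which are zero, while the genuine consistency errors come from replacing $\int_{t_{m-1}}^{t_m}\mu\Delta\bfu\ds$ by $\Delta t\,\mu\Delta\bfu(t_m)$ and $\int_{t_{m-1}}^{t_m}(\nabla\bfu)\bfu\ds$ by $\Delta t\,(\nabla\bfu(t_m))\bfu(t_m)$. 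Testing the error equation with $\bfphi=\Pi_h$-free here, i.e. with $\bfe_m$ itself (legitimate since $\bfe_m\in W^{1,2}_{\Div}$), and using the algebraic identity $a\cdot(a-b)=\tfrac12(|a|^2-|b|^2+|a-b|^2)$ on the $L^2$ inner product gives the basic identity
\[
\tfrac12\big(\|\bfe_m\|_{L^2_x}^2-\|\bfe_{m-1}\|_{L^2_x}^2+\|\bfe_m-\bfe_{m-1}\|_{L^2_x}^2\big)+\mu\,\Delta t\,\|\nabla\bfe_m\|_{L^2_x}^2=\sum(\text{error terms}).
\]

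Next I would estimate the error terms. The viscous consistency term is controlled by the time-H\"older regularity of $\nabla\bfu$ from Corollary \ref{cor:uholder}(b), contributing $O((\Delta t)^{2\alpha})$ after summation. The convective consistency term is handled by the classical deterministic trick: split $(\nabla\bfu(t_m))\bfu(t_m)-(\nabla\bfu_m)\bfu_m$ into terms linear in $\bfe$ and in $\nabla\bfe$, absorb the gradient part into $\mu\,\Delta t\,\|\nabla\bfe_m\|^2$ using Ladyshenskaya together with Young's inequality, and crucially use that on $\Omega_{\Delta t}^\varepsilon$ we have $\|\nabla\bfu_m\|_{L^2_x}^2\leq -\varepsilon\log(\Delta t)$, so the logarithm only costs a factor $(\Delta t)^{-\varepsilon}$ — this is exactly where the set $\Omega_{\Delta t}^\varepsilon$ and the extra loss in the exponent $2\alpha-\varepsilon$ enter. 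The indicator $\mathbf 1_{\Omega_{\Delta t}^\varepsilon}$ must be inserted carefully: since $\Omega_{\Delta t}^\varepsilon$ is $\mathfrak F_{t_M}$-measurable but not $\mathfrak F_{t_m}$-measurable for $m<M$, one introduces the stopped/larger sets or simply keeps $\mathbf 1_{\Omega_{\Delta t}^\varepsilon}$ outside all sums and uses that it is bounded by $1$ where needed, exactly as in \cite{CP}.

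The stochastic terms are the delicate part. Writing $\int_{t_{m-1}}^{t_m}\Phi(\bfu)\,\dd W-\Phi(\bfu_{m-1})\Delta_m W=\int_{t_{m-1}}^{t_m}\big(\Phi(\bfu(\sigma))-\Phi(\bfu(t_{m-1}))\big)\dd W+\int_{t_{m-1}}^{t_m}\big(\Phi(\bfu(t_{m-1}))-\Phi(\bfu_{m-1})\big)\dd W$, the second piece pairs with $\bfe_m=\bfe_{m-1}+(\bfe_m-\bfe_{m-1})$; the $\bfe_{m-1}$-part has vanishing conditional expectation by It\^o (since $\bfe_{m-1}$ and $\Phi(\bfu(t_{m-1}))-\Phi(\bfu_{m-1})$ are $\mathfrak F_{t_{m-1}}$-measurable) so it is removed by a martingale/BDG argument after taking expectations over the whole sum, while the $(\bfe_m-\bfe_{m-1})$-part is absorbed into the $\tfrac12\|\bfe_m-\bfe_{m-1}\|_{L^2_x}^2$ term on the left using Young and the Lipschitz bound \eqref{eq:phi} (which gives $\|\Phi(\bfu(t_{m-1}))-\Phi(\bfu_{m-1})\|_{L_2(\mathfrak U;L^2_x)}\leq c\|\bfe_{m-1}\|_{L^2_x}$) plus It\^o isometry $\E\|\int_{t_{m-1}}^{t_m}\cdots\dd W\|^2\leq \Delta t\cdot c\,\E\|\bfe_{m-1}\|^2$. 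The first piece, involving $\Phi(\bfu(\sigma))-\Phi(\bfu(t_{m-1}))$, is estimated by the Lipschitz bound and the H\"older continuity $\|\bfu(\sigma)-\bfu(t_{m-1})\|_{L^2_x}\leq \|\bfu\|_{C^\alpha}(\Delta t)^\alpha$ from Corollary \ref{cor:uholder}(a), again giving $O((\Delta t)^{2\alpha})$ after a BDG estimate over the sum. Collecting everything, taking expectations, and applying the discrete Gronwall lemma (the exponential factor $e^{cT}$ being harmless) yields \eqref{eq:thm:4}. The main obstacle is the bookkeeping of the stochastic term: one must be scrupulous about which quantities are $\mathfrak F_{t_{m-1}}$-measurable so that the cross term $\sum_m\E\big[\mathbf 1_{\Omega_{\Delta t}^\varepsilon}\langle\int_{t_{m-1}}^{t_m}(\Phi(\bfu(t_{m-1}))-\Phi(\bfu_{m-1}))\dd W,\bfe_{m-1}\rangle\big]$ can be rewritten as a true martingale increment and killed by BDG rather than naively bounded (which would only give the suboptimal $1/4$ rate of \cite{CP}); the presence of the non-$\mathfrak F_{t_m}$-measurable indicator $\mathbf 1_{\Omega_{\Delta t}^\varepsilon}$ makes this step require the same care as in \cite{CP}, typically by first proving the estimate with a family of nested stopping-time sets and then passing to $\Omega_{\Delta t}^\varepsilon$.
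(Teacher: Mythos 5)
Your proposal follows essentially the same route as the paper: derive the error equation from the strong form and the scheme, test with $\bfe_m$, treat the convective and stochastic terms exactly as in \cite{CP} using the truncation set $\Omega_{\Delta t}^\varepsilon$ and the martingale structure, and then use the new ingredient --- the $C^\alpha([0,T];W^{1,2}_x)$ regularity of Corollary \ref{cor:uholder}~(b) --- to bound the viscous consistency term $\sum_m\int_{t_{m-1}}^{t_m}\|\nabla\bfu(t_m)-\nabla\bfu(t)\|_{L^2_x}^2\dt$ by $c(\Delta t)^{2\alpha}$, which is precisely where the rate improves from $\alpha-\varepsilon$ to $2\alpha-\varepsilon$. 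One small misattribution: the suboptimal rate in \cite{CP} does not come from a naive bound on the stochastic cross term (which is already handled there by a martingale/BDG argument) but from estimating this viscous consistency term with only $C^\alpha_tL^2_x$ regularity interpolated against $W^{2,2}_x$; this does not affect the validity of your argument.
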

\begin{remark}
\begin{itemize}
\item Estimate \eqref{eq:thm:4} improves the result from \cite[Thm. 3.1]{CP}, where the convergence rate is only $\alpha-\varepsilon$.
\item In the paper \cite{BeMi} the time-discretization of the stochastic Navier--Stokes equations is analysed. The corresponding error does not contain an indicator function such as $\mathbf{1}_{\Omega_{\Delta t}^\varepsilon}$. The convergence
 rate is, however, only of logarithmic order.
 \end{itemize}
\end{remark}
\begin{proof}
Subtracting \eqref{eq:SNS} and \eqref{tdiscr} we obtain the equation for the error $\bfe_m=\bfu(t_m)-\bfu_m$
which reads as
\begin{align}\label{terror}
\begin{aligned}
\int_{\mt}&\bfe_{m}\cdot\bfvarphi \dx +\int_{t_{m-1}}^{t_m}\bigg(\int_{\mt}\big((\nabla\bfu)\bfu-(\nabla\bfu_m)\bfu_{m}\big)\cdot\bfphi\dx+\mu\int_{\mt}\nabla\bfe_{m}:\nabla\bfphi\dx\bigg)\dt\\
&\qquad=\int_{\mt}\bfe_{m-1}\cdot\bfvarphi \dx+\mu\int_{t_{m-1}}^{t_m}\int_{\mt}\big(\nabla \bfu(t_m)-\nabla \bfu(t)\big):\nabla\bfphi\dxt\\
&\qquad+\int_{\mt}\bigg(\int_{t_{m-1}}^{t_m}\Phi(\bfu)\,\dd W-\Phi(\bfu_{m-1})\,\Delta_mW\bigg)\cdot \bfvarphi\dx
\end{aligned}
\end{align}
for all $\bfphi\in W^{1,2}_{\Div}(\mt)$. Choosing $\bfphi=\bfe_m$ implies
\begin{align*}
\frac{1}{2}\int_{\mt}&|\bfe_{m}|^2 \dx+\frac{1}{2}\int_{\mt}|\bfe_{m}-\bfe_{m-1}|^2 \dx +\mu\Delta t\int_{\mt}|\nabla\bfe_{m}|^2\dx\\
&=\frac{1}{2}\int_{\mt}|\bfe_{m-1}|^2 \dx
-\mu\int_{t_{m-1}}^{t_m}\int_{\mt}\big(\nabla \bfu(t_m)-\nabla \bfu(t)\big):\nabla\bfe_{m}\dxt\\&+\int_{t_{m-1}}^{t_m}
\int_{\mt}\big((\nabla\bfu)\bfu-(\nabla\bfu_m)\bfu_{m}\big)\cdot\bfe_m\dx\\&+\int_{\mt}\bigg(\int_{t_{m-1}}^{t_m}\Phi(\bfu)\,\dd W-\Phi(\bfu_{m-1})\,\Delta_mW\bigg)\cdot \bfe_m\dx.
\end{align*}
Iterating this equality and following the arguments in \cite[proof of Thm. 3.1]{CP}
yields
\begin{align}\label{eq:66}
\begin{aligned}
\E\bigg[&\mathbf{1}_{\Omega_{\Delta t}^\varepsilon}\bigg(\max_{1\leq m\leq M}\|\bfu(t_m)-\bfu_{m}\|^2_{L^2_x}+\Delta t\sum_{m=1}^M  \|\nabla\bfu(t_m)-\nabla\bfu_{m}\|^2_{L^2_x}\bigg)\bigg]\\&\leq \,c\,(\Delta t)^{2\alpha-\varepsilon}
+c\,\E\bigg[\sum_{m=1}^M\int_{t_{m-1}}^{t_m}\int_{\mt}\big|\nabla \bfu(t_m)-\nabla \bfu(t)\big|^2\dxt\bigg].
\end{aligned}
\end{align}
Note that only the first bound from \eqref{eq:holder'} has been used for \eqref{eq:66}. The second bound from \eqref{eq:holder'}, not used in \cite{CP},
allows us to estimates the remaining integral by
\begin{align*}
\E\bigg[\sum_{m=1}^M\int_{t_{m-1}}^{t_m}\int_{\mt}\big|\nabla \bfu(t_m)-\nabla \bfu(t)\big|^2\dxt\bigg]\leq \,c\,(\Delta t)^{2\alpha}
\end{align*}
which finishes the proof.
\end{proof}

\section{Finite element based space-time approximation}

Now we consider a fully practical scheme combining the implicite Euler scheme in time (as in the last section) with a finite element approximation in space. 
For a given $h>0$ let $\bfu_{h,0}$ be an $\mathfrak F_0$-mesureable random variable with values in $V^h_{\Div}(\mt)$ (for instance $\Pi_h\bfu_0$). We aim at constructing iteratively a sequence of random variables $\bfu_{h,m}$ with values in $V^h_{\Div}(\mt)$ such that
for every $\bfphi\in V^h_{\Div}(\mt)$ it holds true $\p$-a.s.
\begin{align}\label{txdiscr}
\begin{aligned}
\int_{\mt}&\bfu_{h,m}\cdot\bfvarphi \dx +\Delta t\int_{\mt}\big((\nabla\bfu_{h,m})\bfu_{h,m-1}+(\Div\bfu_{h,m-1})\bfu_{h,m}\big)\cdot\bfphi\dx\\
&+\mu\,\Delta t\int_{\mt}\nabla\bfu_{m}:\nabla\bfphi\dx=\int_{\mt}\bfu_{h,m-1}\cdot\bfvarphi \dx+\int_{\mt}\Phi(\bfu_{h,m-1})\,\Delta_mW\cdot \bfvarphi\dx,
\end{aligned}
\end{align}
where $\Delta_m W=W(t_m)-W(t_{m-1})$. We quote the following result concerning the existence of solutions $\bfu_{h,m}$ to \eqref{txdiscr} from \cite[Lemma 3.1]{BCP}.

\begin{lemma}\label{lemma:3.1BCP}
Let $1\leq q<\infty$. Assume that $\bfu_{h,0}\in L^{2^q}(\Omega,V_{\Div}^h(\mt))$ is an $\mathfrak F_0$-mesureable random variable and that $\E\big[\|\bfu_{h,0}\|^{2^q}_{L^2_x}\big|]\leq \,K$ uniformly in $h$ for some $K>0$. Suppose that $\Phi$ satisfies \eqref{eq:phi}. Then the iterates $(\bfu_{h,m})_{m=1}^M$ given by \eqref{txdiscr}
are $\mathfrak F_{t_m}$-measurable. Moreover, the following estimate holds uniformly in $M$ and $h$:
\begin{align}
\label{lem:4.1}\E\bigg[\max_{1\leq m\leq M}\|\bfu_{h,m}\|^{2^q}_{L^{2}_x}+\Delta t\sum_{k=1}^M\|\bfu_{h,m}\|^{2^{q}-2}_{L^{2}_x}\|\nabla\bfu_{h,m}\|^2_{L^2_x}\bigg]&\leq\,c(q,T,K).
\end{align}
\end{lemma}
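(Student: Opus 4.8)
The plan is to run, at the level of the scheme \eqref{txdiscr}, the same energy/moment argument that gives the a priori bound of Lemma~\ref{lem:reg}(a). Existence and measurability are the routine part. Fix $m\ge1$ and condition on $\bfu_{h,m-1}$ --- which is $\mathfrak F_{t_{m-1}}$-measurable by induction, the base case being the hypothesis on $\bfu_{h,0}$ --- and on the increment $\Delta_m W$. The left-hand side of \eqref{txdiscr} is a bilinear form $B(\bfu_{h,m},\bfphi)$ on the finite-dimensional space $V^h_{\Div}(\mt)$, and the skew-symmetric form of the convective term makes it vanish when tested against $\bfphi=\bfu_{h,m}$, so $B(\bfv,\bfv)=\|\bfv\|_{L^2_x}^2+\mu\,\Delta t\,\|\nabla\bfv\|_{L^2_x}^2\ge\|\bfv\|_{L^2_x}^2$; thus $B$ is coercive and Lax--Milgram yields a unique $\bfu_{h,m}\in V^h_{\Div}(\mt)$ depending Borel-measurably on $(\bfu_{h,m-1},\Delta_m W)$, whence the $\mathfrak F_{t_m}$-measurability of all iterates by induction. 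The same coercivity gives the deterministic bound $\|\bfu_{h,m}\|_{L^2_x}\le\|\bfu_{h,m-1}\|_{L^2_x}+\|\Phi(\bfu_{h,m-1})\Delta_m W\|_{L^2_x}$, which, together with the Gaussianity of $\Delta_m W$ and \eqref{norm1}, shows inductively that $\bfu_{h,m}\in L^{2^q}(\Omega)$ for each fixed $h,\Delta t$; this (non-uniform) integrability is all that is needed to licence the absorptions below, the uniform bound coming from the final Gronwall step --- alternatively one truncates with the discrete stopping time $\min\{m:\|\bfu_{h,m}\|_{L^2_x}>R\}$ and lets $R\to\infty$.

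For the estimate itself I first treat $q=1$. Testing \eqref{txdiscr} with $\bfphi=\bfu_{h,m}$, using $2\bfa\cdot(\bfa-\bfb)=|\bfa|^2-|\bfb|^2+|\bfa-\bfb|^2$ for $\int_{\mt}\bfu_{h,m}\cdot(\bfu_{h,m}-\bfu_{h,m-1})\dx$ and the vanishing of the convective term, one gets the discrete energy identity
\begin{align*}
\tfrac12\big(\|\bfu_{h,m}\|_{L^2_x}^2-\|\bfu_{h,m-1}\|_{L^2_x}^2+\|\bfu_{h,m}-\bfu_{h,m-1}\|_{L^2_x}^2\big)+\mu\,\Delta t\,\|\nabla\bfu_{h,m}\|_{L^2_x}^2=\int_{\mt}\Phi(\bfu_{h,m-1})\,\Delta_m W\cdot\bfu_{h,m}\dx .
\end{align*}
Writing $\bfu_{h,m}=\bfu_{h,m-1}+(\bfu_{h,m}-\bfu_{h,m-1})$ on the right, Young's inequality estimates the second piece by $\tfrac14\|\bfu_{h,m}-\bfu_{h,m-1}\|_{L^2_x}^2+\|\Phi(\bfu_{h,m-1})\Delta_m W\|_{L^2_x}^2$ (the first term absorbed), leaving the martingale increment $D_m:=\int_{\mt}\Phi(\bfu_{h,m-1})\Delta_m W\cdot\bfu_{h,m-1}\dx$, for which $\E[D_m\mid\mathfrak F_{t_{m-1}}]=0$, $\E[D_m^2\mid\mathfrak F_{t_{m-1}}]\le\Delta t\,\|\Phi(\bfu_{h,m-1})\|_{L_2(\mathfrak U;L^2_x)}^2\|\bfu_{h,m-1}\|_{L^2_x}^2$, while the remaining term has conditional expectation $\Delta t\,\|\Phi(\bfu_{h,m-1})\|_{L_2(\mathfrak U;L^2_x)}^2$. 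Summing in $m$, taking $\max_{1\le n\le M}$ and then expectations, applying the discrete Burkholder--Davis--Gundy inequality to $(\sum_{m\le n}D_m)_n$ --- bounding its quadratic variation by $(\max_m\|\bfu_{h,m}\|_{L^2_x}^2)^{1/2}(\Delta t\sum_m\|\Phi(\bfu_{h,m-1})\|_{L_2(\mathfrak U;L^2_x)}^2)^{1/2}$ and absorbing the first factor by Young --- and using the linear growth \eqref{norm1}, one arrives at an inequality of the form
\begin{align*}
\E\Big[\max_{1\le n\le M}\|\bfu_{h,n}\|_{L^2_x}^2\Big]+\mu\,\Delta t\sum_{m=1}^M\E\big[\|\nabla\bfu_{h,m}\|_{L^2_x}^2\big]\le c\,\E\big[\|\bfu_{h,0}\|_{L^2_x}^2\big]+cT+c\,\Delta t\sum_{m=1}^M\E\big[\|\bfu_{h,m-1}\|_{L^2_x}^2\big],
\end{align*}
and the discrete Gronwall lemma finishes the case $q=1$.

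For $q\ge2$ I induct on $q$, using a discrete It\^o expansion of $x\mapsto x^{p}$ with $p=2^{q-1}$ applied to $a_m:=\|\bfu_{h,m}\|_{L^2_x}^2$. The per-step identity above expresses $a_m-a_{m-1}$ through $D_m$, $\|\Phi(\bfu_{h,m-1})\Delta_m W\|_{L^2_x}^2$ and the nonnegative dissipation/increment terms; inserting this into the Taylor identity $a_m^p-a_{m-1}^p=p\,a_{m-1}^{p-1}(a_m-a_{m-1})+\binom{p}{2}\,\xi_m^{p-2}(a_m-a_{m-1})^2$ (with $\xi_m$ between $a_{m-1}$ and $a_m$) and summing yields a bound of the form
\begin{align*}
a_n^p+c\,\Delta t\sum_{m=1}^n a_{m-1}^{p-1}\|\nabla\bfu_{h,m}\|_{L^2_x}^2\le a_0^p+c\sum_{m=1}^n a_{m-1}^{p-1}\big(D_m+\|\Phi(\bfu_{h,m-1})\Delta_m W\|_{L^2_x}^2\big)+c\sum_{m=1}^n\xi_m^{p-2}(a_m-a_{m-1})^2 .
\end{align*}
One then splits each stochastic sum into a genuine martingale plus its predictable compensator, bounds the compensator and the It\^o-correction $\sum\xi_m^{p-2}(a_m-a_{m-1})^2$ via $\xi_m\le a_{m-1}+a_m$, the per-step formula for $a_m-a_{m-1}$, \eqref{norm1} and Young's inequality (absorbing into the weighted dissipation and increments on the left, and into sub-$p$ powers of $a_{m-1}$ controlled by the induction hypothesis for $q-1$), takes $\max_{1\le n\le M}$ and expectations, and applies the discrete Burkholder--Davis--Gundy inequality with exponent $p$ to the martingale part (whose predictable quadratic variation is $\lesssim\Delta t\sum_m(1+a_{m-1}^2)$, so its $p/2$-th moment is $\lesssim1+\Delta t\sum_m\E[\max_{k\le m-1}a_k^p]$ by Jensen). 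This produces
\begin{align*}
\E\Big[\max_{1\le n\le M}a_n^p\Big]+c\,\Delta t\sum_{m=1}^M\E\big[a_m^{p-1}\|\nabla\bfu_{h,m}\|_{L^2_x}^2\big]\le c(K,T,q)+c\,\Delta t\sum_{m=1}^M\E\Big[\max_{1\le k\le m-1}a_k^p\Big],
\end{align*}
where the passage from the $a_{m-1}^{p-1}$- to the $a_m^{p-1}$-weight in the gradient term is a further Young/induction step, and the discrete Gronwall lemma closes the induction.

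The one delicate point, and the place where most of the work lies, is the bookkeeping in the bootstrap: the discrete It\^o correction and the predictable quadratic variation must be organised so that every application of Young's inequality leaves a term that is either absorbable into the left-hand side (the weighted dissipation $\Delta t\sum a_{m-1}^{p-1}\|\nabla\bfu_{h,m}\|_{L^2_x}^2$ and the weighted jumps $\sum a_{m-1}^{p-1}\|\bfu_{h,m}-\bfu_{h,m-1}\|_{L^2_x}^2$), or of the Gronwall form $\Delta t\sum_m\E[\max_{k\le m-1}a_k^p]$, or a sub-$p$ power of $\|\bfu_{h,m-1}\|_{L^2_x}$ covered by the induction hypothesis --- which in practice means one propagates all moments of $\Delta t\sum_m\|\nabla\bfu_{h,m}\|_{L^2_x}^2$ and of $\sum_m\|\bfu_{h,m}-\bfu_{h,m-1}\|_{L^2_x}^2$ alongside $a_m^p$ in the same induction. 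All these absorptions are legitimate since, for fixed $h$ and $\Delta t$, $\E[\max_m a_m^p]<\infty$ by the Gaussian structure of the scheme (or after the stopping-time truncation mentioned above).
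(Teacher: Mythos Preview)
The paper does not give its own proof of this lemma: it is simply quoted from \cite[Lemma~3.1]{BCP}. Your sketch is a faithful reconstruction of the standard argument that underlies that result --- test \eqref{txdiscr} with $\bfu_{h,m}$, use the skew-symmetry of the modified convective term to get a discrete energy identity, split the stochastic right-hand side into a martingale increment against $\bfu_{h,m-1}$ plus a piece absorbed by the jump $\|\bfu_{h,m}-\bfu_{h,m-1}\|_{L^2_x}^2$, apply discrete BDG and Gronwall for $q=1$, and then bootstrap to higher moments via a discrete Taylor/It\^o expansion of $a_m^p=\|\bfu_{h,m}\|_{L^2_x}^{2p}$. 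This is correct in outline and matches the approach one finds in \cite{BCP} and \cite{CP}.

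One small caveat: as literally written in \eqref{txdiscr} the divergence correction carries no factor $\tfrac12$, so testing with $\bfphi=\bfu_{h,m}$ leaves
\[
\int_{\mt}\big((\nabla\bfu_{h,m})\bfu_{h,m-1}+(\Div\bfu_{h,m-1})\bfu_{h,m}\big)\cdot\bfu_{h,m}\dx=\tfrac12\int_{\mt}(\Div\bfu_{h,m-1})\,|\bfu_{h,m}|^2\dx,
\]
not zero. This is almost certainly a typo for the usual Temam form with $\tfrac12\,\Div\bfu_{h,m-1}\,\bfu_{h,m}$; with that correction your coercivity/Lax--Milgram argument and the energy identity hold exactly as you state. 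Apart from this, your proposal is sound; the ``delicate bookkeeping'' you flag in the bootstrap is real but routine, and your description of which terms are absorbed, which go to Gronwall, and which are handled by the induction hypothesis is accurate.
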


\subsection{Error analysis}
\label{sec:error}

In this subsection we establish convergence with rates of the above defined algorithm. We introduce for $\varepsilon>0$ the sample set
\begin{align*}
\Omega^\varepsilon_{h}=\Big\{\omega\in\Omega\Big|\max_{0\leq m\leq M}\Big(\|\nabla \bfu_m\|^4_{L^2_x}+\|\bfu_{h,m}\|_{L^2_x}^2\Big)\leq -\varepsilon\log h\Big\}
\end{align*}
which can be controlled by
\begin{align*}
\p\big(\Omega^\varepsilon_{h}\big)\geq 1-\frac{\E\big[\max_{1\leq m\leq M}\big(\|\nabla \bfu_m\|^4_{L^2_x}+\|\bfu_{h,m}\|_{L^2_x}^2\big)\big]}{-\varepsilon\log h}\geq1+\frac{C}{\varepsilon\log h}
\end{align*}
using Lemma \ref{lemma:3.1} and \ref{lemma:3.1BCP}. We also recall the definition of $\Omega^\varepsilon_{\Delta t}$ in \eqref{eq:ODT}. We are now ready to state our main result.

\begin{theorem}\label{thm:4}
Let $\bfu_0\in L^2(\Omega,W^{1,2}_{\Div}(\mt))\cap L^8(\Omega;L^2_{\Div}(\mt))$ be $\F_0$-measurable and assume that $\Phi$ satisfies \eqref{eq:phi}. Let $\bfu$ be the unique strong solution to \eqref{eq:SNS} in the sense of Definition \ref{def:inc2d}.
Suppose further that
\begin{align*}
\E\Big[\|\bfu\|_{C^\alpha([0,T];L^{4}_x)}^4\Big]<\infty,\quad
\E\Big[\|\bfu\|_{C^\alpha([0,T];W^{1,2}_x)}^2\Big]<\infty,
\end{align*}
for $\alpha\in(0,\tfrac{1}{2})$; recall Corollary \ref{cor:uholder}. Assume that $L\Delta t\leq \,(-\varepsilon\log h)^{-1}$ for some $L>0$.
Then we have
\begin{align}\label{eq:thm:4}
\begin{aligned}
\E\bigg[\mathbf{1}_{\Omega^\varepsilon_{\Delta t}\cap\Omega^\varepsilon_{h}}\bigg(\max_{1\leq m\leq M}\|\bfu(t_m)-\bfu_{h,m}\|_{L^2_x}^2&+\sum_{m=1}^M \Delta t \|\nabla\bfu(t_m)-\nabla\bfu_{h,m}\|_{L^2_x}^2\bigg)\bigg]\\&\leq \,c\,\big(h^2+(\Delta t)^{2\alpha-\varepsilon}\big),
\end{aligned}
\end{align}
where $(\bfu_{h,m})_{m=1}^M$ is the solution to \eqref{txdiscr} with $\bfu_{h,0}=\Pi_h\bfu_0$.
The constant $c$ is \eqref{eq:thm:4} is independent of $M$ and $h$.
\end{theorem}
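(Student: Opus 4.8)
The plan is to pass through the time-discrete iterates $(\bfu_m)_{m=1}^M$ of Section 3. Writing $\bfu(t_m)-\bfu_{h,m}=(\bfu(t_m)-\bfu_m)+(\bfu_m-\bfu_{h,m})$, the first difference is estimated on $\Omega^\varepsilon_{\Delta t}$ by Theorem \ref{thm:3.1} (whose hypotheses hold here by Corollary \ref{cor:uholder}), so it suffices to control $\bfe_m:=\bfu_m-\bfu_{h,m}$ on $\Omega^\varepsilon_{\Delta t}\cap\Omega^\varepsilon_h$. I decompose $\bfe_m=(\bfu_m-\Pi_h\bfu_m)+\bfxi_m$ with $\bfxi_m:=\Pi_h\bfu_m-\bfu_{h,m}\in V^h_{\Div}(\mt)$; by \eqref{eq:stab}, \eqref{eq:stab'} and the uniform $W^{1,2}$-bound from Lemma \ref{lemma:3.1} the projection error satisfies $\E[\max_m\|\bfu_m-\Pi_h\bfu_m\|_{L^2_x}^2]\le ch^2$ and $\E[\Delta t\sum_m\|\nabla(\bfu_m-\Pi_h\bfu_m)\|_{L^2_x}^2]\le ch^2$, so only $\bfxi_m$ remains. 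Since $\bfxi_m$ is merely \emph{discretely} divergence-free, I do not test \eqref{tdiscr} but the pressure-decomposed time-discrete identity \eqref{eq:pressure} with $\bfvarphi=\bfxi_m$ — this is exactly where $\pi_m^{\mathrm{det}}$ and $\Phi^\pi_{m-1}$ enter — and subtract \eqref{txdiscr} tested with $\bfxi_m$. Using $\langle\bfu_m-\Pi_h\bfu_m,\bfxi_m\rangle=\langle\bfu_{m-1}-\Pi_h\bfu_{m-1},\bfxi_m\rangle=0$ and $\langle\bfxi_m-\bfxi_{m-1},\bfxi_m\rangle=\tfrac12\|\bfxi_m\|_{L^2_x}^2-\tfrac12\|\bfxi_{m-1}\|_{L^2_x}^2+\tfrac12\|\bfxi_m-\bfxi_{m-1}\|_{L^2_x}^2$, one obtains an energy identity with left-hand side $\tfrac12\|\bfxi_m\|_{L^2_x}^2-\tfrac12\|\bfxi_{m-1}\|_{L^2_x}^2+\tfrac12\|\bfxi_m-\bfxi_{m-1}\|_{L^2_x}^2+\mu\Delta t\|\nabla\bfxi_m\|_{L^2_x}^2$ and a right-hand side consisting of a viscous projection defect $-\mu\Delta t\langle\nabla(\bfu_m-\Pi_h\bfu_m),\nabla\bfxi_m\rangle$, the difference of the two convective terms, $\Delta t\langle\pi_m^{\mathrm{det}},\Div\bfxi_m\rangle$, the noise difference $\langle(\Phi(\bfu_{m-1})-\Phi(\bfu_{h,m-1}))\,\Delta_mW,\bfxi_m\rangle$, and the stochastic-pressure term $\langle\Phi^\pi_{m-1}\,\Delta_mW,\bfxi_m\rangle$.

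After summing over $m$, taking the maximum, and applying expectations the routine terms are dealt with as follows. The viscous defect is absorbed into $\delta\Delta t\sum_m\|\nabla\bfxi_m\|_{L^2_x}^2$ at the cost of $c(\delta)h^2$ via \eqref{eq:stab'} and Lemma \ref{lemma:3.1}. The convective difference is treated as in \cite[proof of Thm.~3.1]{CP}: using the stabilised form of the nonlinearity in \eqref{txdiscr}, Ladyshenskaya's inequality in two dimensions and Young's inequality, part of it is absorbed into $\delta\Delta t\sum_m\|\nabla\bfxi_m\|_{L^2_x}^2$, the remainder producing discrete Gronwall terms $\sum_m\Delta t\,\omega_m\|\bfxi_{m-1}\|_{L^2_x}^2$ together with $O(h^2)$ (projection error) and $O((\Delta t)^{2\alpha})$ (time increments $\bfu_m-\bfu_{m-1}$, controlled by Lemma \ref{lemma:3.1}); on $\Omega^\varepsilon_{\Delta t}\cap\Omega^\varepsilon_h$ the weights $\omega_m$ are bounded by $-c\,\varepsilon^{-1}\log(\Delta t)$ (resp.\ $-c\,\varepsilon^{-1}\log h$), which is why the discrete Gronwall lemma will cost only a factor $(\Delta t)^{-c\varepsilon}$. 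For $\Delta t\langle\pi_m^{\mathrm{det}},\Div\bfxi_m\rangle$ one uses that $\Div\bfxi_m$ is $L^2$-orthogonal to $P^h(\mt)$ to replace $\pi_m^{\mathrm{det}}$ by $\pi_m^{\mathrm{det}}-\Pi_h^\pi\pi_m^{\mathrm{det}}$, and then \eqref{eq:stabpi}, $\|\Div\bfxi_m\|_{L^2_x}\le\|\nabla\bfxi_m\|_{L^2_x}$, Young's inequality and Lemma \ref{lemma:3.2} give $O(h^2)$. For the noise difference I split $\bfxi_m=\bfxi_{m-1}+(\bfxi_m-\bfxi_{m-1})$: the $\bfxi_{m-1}$-part is a sum of martingale differences, controlled by the discrete Burkholder--Davis--Gundy inequality together with the Lipschitz bound $\sum_k\|\bfg_k(\bfu_{m-1})-\bfg_k(\bfu_{h,m-1})\|_{L^2_x}^2\le c\|\bfe_{m-1}\|_{L^2_x}^2$ from \eqref{eq:phi}, yielding $O(h^2)$ plus a Gronwall term; the $(\bfxi_m-\bfxi_{m-1})$-part is $\le\tfrac18\|\bfxi_m-\bfxi_{m-1}\|_{L^2_x}^2+c\|(\Phi(\bfu_{m-1})-\Phi(\bfu_{h,m-1}))\,\Delta_mW\|_{L^2_x}^2$, the first term absorbed and the sum of the second having expectation $\lesssim\E[\Delta t\sum_m\|\bfe_{m-1}\|_{L^2_x}^2]$, again $O(h^2)$ plus a Gronwall term.

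The decisive term is $\langle\Phi^\pi_{m-1}\,\Delta_mW,\bfxi_m\rangle$. Because $\Pi_h$ is the $L^2$-projection onto $V^h_{\Div}(\mt)$ and $\bfxi_m\in V^h_{\Div}(\mt)$, this equals $\langle\Pi_h[\Phi^\pi_{m-1}\,\Delta_mW],\bfxi_m\rangle$, and the key estimate is
\begin{align*}
\bigl\|\Pi_h[\Phi^\pi_{m-1}\,\Delta_mW]\bigr\|_{L^2_x}\le c\,h\,\bigl\|\Phi(\bfu_{m-1})\,\Delta_mW\bigr\|_{W^{1,2}_x}.
\end{align*}
Indeed, $\Phi^\pi_{m-1}\Delta_mW=\nabla\psi_m$ with $\psi_m=-\Delta^{-1}\Div(\Phi(\bfu_{m-1})\Delta_mW)$; for $\bfvarphi\in V^h_{\Div}(\mt)$ an integration by parts and $\langle\Pi_h^\pi\psi_m,\Div\bfvarphi\rangle=0$ give $\langle\Pi_h[\nabla\psi_m],\bfvarphi\rangle=-\langle\psi_m-\Pi_h^\pi\psi_m,\Div\bfvarphi\rangle$, and one concludes with \eqref{eq:stabpi'} (here the assumption $j\ge1$ is used), the inverse inequality $\|\nabla\bfvarphi\|_{L^2_x}\le ch^{-1}\|\bfvarphi\|_{L^2_x}$ on the quasi-uniform mesh, and $\|\nabla^2\psi_m\|_{L^2_x}\le c\|\Phi(\bfu_{m-1})\Delta_mW\|_{W^{1,2}_x}$ (continuity of $\Delta^{-1}$). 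Splitting once more $\bfxi_m=\bfxi_{m-1}+(\bfxi_m-\bfxi_{m-1})$, the increment part is $\le\tfrac18\|\bfxi_m-\bfxi_{m-1}\|_{L^2_x}^2+c\|\Pi_h[\Phi^\pi_{m-1}\Delta_mW]\|_{L^2_x}^2$, the first term absorbed, and summing the second with the displayed bound, the It\^o isometry, \eqref{norm2} and Lemma \ref{lemma:3.1} gives precisely $O(h^2)$ — this is where the stochastic pressure decomposition pays off. The $\bfxi_{m-1}$-part, $\sum_{j\le m}\langle\Pi_h[\Phi^\pi_{j-1}\Delta_jW],\bfxi_{j-1}\rangle$, is a discrete stochastic integral; here we benefit from the averaging of the It\^o integral: by the discrete Burkholder--Davis--Gundy inequality it is dominated by the square root of $\sum_j\Delta t\,\|\Pi_h[\Phi^\pi_{j-1}]\|_{L_2(\mathfrak U;L^2_x)}^2\|\bfxi_{j-1}\|_{L^2_x}^2\lesssim h^2\sum_j\Delta t\,(1+\|\bfu_{j-1}\|_{W^{1,2}_x}^2)\|\bfxi_{j-1}\|_{L^2_x}^2$ (using Lemma \ref{lemma:3.3} / the displayed bound), and Young's inequality then absorbs $\delta\,\E[\max_m\|\bfxi_m\|_{L^2_x}^2]$ into the left-hand side while leaving a term of order $h^2$. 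A pathwise estimate of this term would instead lose a power of $\Delta t$, which is why the Itô averaging is essential.

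Collecting all estimates, the quantity $\E[\mathbf 1_{\Omega^\varepsilon_{\Delta t}\cap\Omega^\varepsilon_h}(\max_m\|\bfxi_m\|_{L^2_x}^2+\Delta t\sum_m\|\nabla\bfxi_m\|_{L^2_x}^2)]$ is, after choosing $\delta$ small, bounded by $c(h^2+(\Delta t)^{2\alpha-\varepsilon})$ plus a discrete Gronwall term $c\,\E[\Delta t\sum_m\omega_m\mathbf 1_{\Omega^\varepsilon_{\Delta t}\cap\Omega^\varepsilon_h}\|\bfxi_{m-1}\|_{L^2_x}^2]$ with $\sum_m\Delta t\,\omega_m\lesssim-\varepsilon^{-1}\log h$ on the good set (using $L\Delta t\le(-\varepsilon\log h)^{-1}$), so the discrete Gronwall lemma closes the argument at the price of a further power $(\Delta t)^{-c\varepsilon}$, absorbed by relabelling $\varepsilon$. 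Because the Burkholder--Davis--Gundy steps require adaptedness while $\mathbf 1_{\Omega^\varepsilon_{\Delta t}\cap\Omega^\varepsilon_h}$ depends on the whole trajectory, I would carry out the computation up to the discrete stopping time equal to the first index at which either defining inequality of $\Omega^\varepsilon_{\Delta t}$ or of $\Omega^\varepsilon_h$ fails — before that index all coefficients are bounded by powers of $-\log(\Delta t)$ and $-\log h$, and on $\Omega^\varepsilon_{\Delta t}\cap\Omega^\varepsilon_h$ the stopped and unstopped quantities agree. Together with the projection error bound and Theorem \ref{thm:3.1}, the triangle inequality then yields \eqref{eq:thm:4}. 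The main obstacle, and the only genuinely new point compared with \cite{CP}, is the stochastic-pressure term: one must simultaneously extract the full power $h$ from the discrete (Helmholtz-type) projection $\Pi_h$ of the gradient $\Phi^\pi_{m-1}\Delta_mW$ — which relies on the higher-order pressure approximation \eqref{eq:stabpi'} and on the $W^{1,2}$-regularity of $\Phi(\bfu_m)$ — and avoid losing a power of $\Delta t$ in its martingale part by invoking the Burkholder--Davis--Gundy inequality rather than a pathwise bound.
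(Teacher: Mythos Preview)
Your proposal is correct and follows essentially the same route as the paper: split through the semi-discrete iterates $\bfu_m$, invoke Theorem \ref{thm:3.1} on $\Omega^\varepsilon_{\Delta t}$, and for the space error test the pressure-decomposed identity \eqref{eq:pressure} against $\Pi_h\bfe_{h,m}(=\bfxi_m)$, treating $\pi_m^{\mathrm{det}}$ via Lemma \ref{lemma:3.2} and the stochastic-pressure contribution via integration by parts, the orthogonality $\langle\Pi_h^\pi\psi_m,\Div\bfxi_m\rangle=0$, the second-order bound \eqref{eq:stabpi'}, an inverse estimate, and BDG/It\^o-isometry after the split $\bfxi_m=\bfxi_{m-1}+(\bfxi_m-\bfxi_{m-1})$; the paper carries out exactly these steps (its $\mathscr M_{m,2}^1,\mathscr M_{m,2}^2$), the only cosmetic difference being that you first isolate the clean estimate $\|\Pi_h[\Phi^\pi_{m-1}]\|_{L_2(\mathfrak U;L^2_x)}\le ch\|\Phi(\bfu_{m-1})\|_{L_2(\mathfrak U;W^{1,2}_x)}$ whereas the paper performs the same manipulation inline, and your discrete stopping time corresponds to the paper's nested sets $\Omega^\varepsilon_{h,m}$. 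One minor correction: the contribution of the convective time-increments $\bfu_m-\bfu_{m-1}$ is in fact $O(\Delta t)$ via \eqref{lem:3.1c} (not merely $O((\Delta t)^{2\alpha})$), so the space step yields $c(h^2+\Delta t)$ on $\Omega^\varepsilon_h$ alone and the Gronwall closes without any extra $(\Delta t)^{-c\varepsilon}$ loss.
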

\begin{remark}
We do not expect that it is possible to avoid an indicator function in general (see \cite{Pr} for the numerical approximation of stochastic PDEs with non-Lipschitz nonlinearities).
But it is not clear if our choice of the sample subset is optimal.
\end{remark}
%
%

The rest of the paper is devoted to the proof of Theorem \ref{thm:4}.
In fact Theorem \ref{thm:4} will follow from combining Theorem \ref{thm:3.1}
with the following theorem which estimates the error between the time-discrete solution $\bfu_m$ to \eqref{tdiscr}
and the solution $\bfu_{h,m}$ to \eqref{txdiscr}.
\begin{theorem}\label{thm:4a}
Let $\bfu_0\in L^2(\Omega;W^{1,2}_{\Div}(\mt))\cap L^8(\Omega;L^2_{\Div}(\mt))$ be $\F_0$-measurable and assume that $\Phi$ satisfies \eqref{eq:phi}. Let $(\bfu_m)_{m=1}^M$ be the solution to \eqref{tdiscr}. Assume that $L\Delta t\leq \,(-\varepsilon\log h)^{-1}$ for some $L>0$.
Then we have
\begin{align}\label{eq:thm:4'}
\begin{aligned}
\E\bigg[\mathbf{1}_{\Omega^\varepsilon_{h}}\bigg(\max_{1\leq m\leq M}\|\bfu_m-\bfu_{h,m}\|_{L^2_x}^2&+\Delta t \sum_{m=1}^M \|\nabla\bfu_m-\nabla\bfu_{h,m}\|_{L^2_x}^2\bigg)\bigg]\\&\leq \,c\,\big(h^2+\Delta t\big),
\end{aligned}
\end{align}
where $(\bfu_{h,m})_{m=1}^M$ is the solution to \eqref{txdiscr} with $\bfu_{h,0}=\Pi_h\bfu_0$.
The constant $c$ is \eqref{eq:thm:4'} is independent of $M$ and $h$.
\end{theorem}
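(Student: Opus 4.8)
The plan is to test the error equation obtained by subtracting \eqref{txdiscr} from the pressure-augmented version \eqref{eq:pressure} of \eqref{tdiscr}, using the discrete error $\bfE_m:=\bfu_m-\bfu_{h,m}$ and its projection. Write $\bfe_m:=\bfu_m-\Pi_h\bfu_m$ (the projection error, controlled by \eqref{eq:stab}--\eqref{eq:stab'} and the $W^{2,2}$-bound \eqref{lem:3.1a}) and $\bfE_m^h:=\Pi_h\bfu_m-\bfu_{h,m}\in V^h_{\Div}(\mt)$, so $\bfE_m=\bfe_m+\bfE_m^h$. Since $\bfE_m^h$ is discretely solenoidal we may take $\bfphi=\bfE_m^h$ as a test function in the difference of the two schemes; the deterministic pressure $\pi_m^{\mathrm{det}}$ does \emph{not} drop out because $\bfE_m^h$ is only \emph{discretely} divergence-free, and this is exactly where the pressure decomposition enters: we replace $\int_{\mt}\pi_m^{\mathrm{det}}\Div\bfE_m^h\dx$ by $\int_{\mt}(\pi_m^{\mathrm{det}}-\Pi_h^\pi\pi_m^{\mathrm{det}})\Div\bfE_m^h\dx$ and estimate this via \eqref{eq:stabpi} together with the bound on $\E[\Delta t\sum_m\|\nabla\pi_m^{\mathrm{det}}\|_{L^2_x}^2]$ from Lemma \ref{lemma:3.2}, picking up a factor $h$. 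The analogous stochastic pressure term $\Phi_{m-1}^\pi\,\Delta_m W$ is handled the same way using Lemma \ref{lemma:3.3} and \eqref{eq:stabpi}.

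The core computation is a discrete energy identity: testing with $\bfE_m^h$ produces $\tfrac12\|\bfE_m^h\|_{L^2_x}^2-\tfrac12\|\bfE_{m-1}^h\|_{L^2_x}^2+\tfrac12\|\bfE_m^h-\bfE_{m-1}^h\|_{L^2_x}^2+\mu\Delta t\|\nabla\bfE_m^h\|_{L^2_x}^2$ on the left, and on the right (i) the projection-error terms $\langle\bfe_m-\bfe_{m-1},\bfE_m^h\rangle$ and $\mu\Delta t\langle\nabla\bfe_m,\nabla\bfE_m^h\rangle$, (ii) the two pressure terms just described, (iii) the convective difference $(\nabla\bfu_m)\bfu_m-(\nabla\bfu_{h,m})\bfu_{h,m-1}-(\Div\bfu_{h,m-1})\bfu_{h,m}$ tested against $\bfE_m^h$, and (iv) the noise difference $\langle(\Phi(\bfu_{m-1})-\Phi(\bfu_{h,m-1}))\Delta_m W,\bfE_m^h\rangle$. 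I would sum over $m$, take the maximum over $m$, and apply expectations. The noise term splits as usual into a martingale part (Burkholder--Davis--Gundy, absorbed into $\delta\,\E[\max_m\|\bfE_m^h\|^2]$) and a quadratic-variation part that is Lipschitz-controlled by $\Delta t\sum_m\|\bfE_{m-1}\|_{L^2_x}^2$ via \eqref{eq:phi}; the projection-error contributions from $\bfe_{m-1}$ there are $O(h^2)$ by \eqref{eq:stab'} and Lemma \ref{lemma:3.1}. The convective term is the classical nonlinear estimate: rewrite it as a sum of terms each involving one factor of $\bfE$ (or $\bfe$) and at most one gradient of $\bfu_m$ or $\bfu_{h,m}$, use Ladyzhenskaya and interpolation, and crucially use the indicator $\mathbf 1_{\Omega^\varepsilon_h}$ to bound $\|\nabla\bfu_m\|_{L^2_x}^4+\|\bfu_{h,m}\|_{L^2_x}^2$ pointwise by $-\varepsilon\log h$; combined with the hypothesis $L\Delta t\leq(-\varepsilon\log h)^{-1}$ this makes the factor $\Delta t(-\varepsilon\log h)$ small enough that a discrete Gronwall argument closes. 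One subtlety: the indicator set is defined through $\bfu_m$ and $\bfu_{h,m}$ at \emph{all} indices, so $\mathbf 1_{\Omega^\varepsilon_h}$ is not $\mathfrak F_{t_{m-1}}$-measurable and does not commute with the stochastic integral; the standard fix (as in \cite{CP}) is to replace $\Omega^\varepsilon_h$ inside the iteration by the stopped/adapted family $\Omega^\varepsilon_{h,m}:=\{\max_{k\le m}(\cdots)\le-\varepsilon\log h\}$, run the estimate with $\mathbf 1_{\Omega^\varepsilon_{h,m-1}}$ (which \emph{is} $\mathfrak F_{t_{m-1}}$-measurable and dominates $\mathbf 1_{\Omega^\varepsilon_h}$), and only at the very end restrict to $\Omega^\varepsilon_h$.

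Collecting terms, one arrives at an inequality of the schematic form
\begin{align*}
\E\Big[\mathbf 1_{\Omega^\varepsilon_{h,m}}\Big(\|\bfE_m^h\|_{L^2_x}^2+\mu\Delta t\sum_{k\le m}\|\nabla\bfE_k^h\|_{L^2_x}^2\Big)\Big]
\le\,c\,(h^2+\Delta t)+c\,\Delta t\sum_{k\le m}\E\Big[\mathbf 1_{\Omega^\varepsilon_{h,k-1}}\|\bfE_{k-1}^h\|_{L^2_x}^2\Big],
\end{align*}
where the $\Delta t$ on the right comes from the stochastic forcing after the telescoping in the noise quadratic-variation term (this is the term responsible for the rate $\Delta t$, not $\Delta t^{2\alpha}$, which is fine because Theorem \ref{thm:4} recombines it with the time-discretization error from Theorem \ref{thm:3.1} where the $\Delta t^{2\alpha-\varepsilon}$ already dominates). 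Discrete Gronwall then gives the bound on $\E[\mathbf 1_{\Omega^\varepsilon_h}(\max_m\|\bfE_m^h\|_{L^2_x}^2+\Delta t\sum_m\|\nabla\bfE_m^h\|_{L^2_x}^2)]$, and adding back the projection errors $\bfe_m$ via \eqref{eq:stab'}, \eqref{lem:3.1a} (which give $\E[\max_m\|\bfe_m\|_{L^2_x}^2+\Delta t\sum_m\|\nabla\bfe_m\|_{L^2_x}^2]\le c\,h^2$) yields \eqref{eq:thm:4'}. The main obstacle is the convective term: one has to choose the algebraic splitting of $(\nabla\bfu_m)\bfu_m-(\nabla\bfu_{h,m})\bfu_{h,m-1}-(\Div\bfu_{h,m-1})\bfu_{h,m}$ and the interpolation exponents so that every dangerous factor is either a gradient of the error (absorbed into $\mu\Delta t\|\nabla\bfE_m^h\|^2$), or a low-order norm of the error times the logarithmically-growing quantity that the indicator controls, while the $\|\bfu_m-\bfu_{h,m-1}\|$ type time-increments are handled by \eqref{lem:3.1b}--\eqref{lem:3.1c}; getting all powers to balance so that Gronwall's constant stays $h$-uniform is the delicate bookkeeping, and it is precisely here that the improvement over \cite{CP} --- namely keeping the pressure term as a genuine $O(h)$ consistency error rather than needing divergence-free noise --- is exploited.
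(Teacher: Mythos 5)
Your overall architecture --- subtracting \eqref{txdiscr} from the pressure-augmented time-discrete equation, testing with the discretely solenoidal part of the error, treating the deterministic pressure as a consistency error via $\Pi_h^\pi$ and Lemma \ref{lemma:3.2}, handling the convective term with the stopped sets $\Omega^\varepsilon_{h,m}$ and discrete Gronwall, and splitting the noise difference $\Phi(\bfu_{m-1})-\Phi(\bfu_{h,m-1})$ into a Burkholder--Davis--Gundy martingale part plus an increment part --- matches the paper. The genuine gap is your treatment of the stochastic pressure term $\int_{\mt}\Phi^\pi_{m-1}\Delta_m W\cdot\Pi_h\bfe_{h,m}\dx$, which you dismiss as being ``handled the same way'' as $\pi_m^{\mathrm{det}}$, i.e.\ by a per-step Cauchy--Schwarz/Young consistency estimate with \eqref{eq:stabpi} and Lemma \ref{lemma:3.3}. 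That estimate does not close. After rewriting the term as $-\int_{\mt}(\mathrm{Id}-\Pi_h^\pi)\Delta^{-1}\Div\Phi(\bfu_{m-1})\,\Delta_m W\,\Div\Pi_h\bfe_{h,m}\dx$ and applying Young with weight $\kappa\Delta t$ in front of $\|\nabla\Pi_h\bfe_{h,m}\|_{L^2_x}^2$, the complementary term carries a factor $(\Delta t)^{-1}$, while the It\^o isometry returns only one factor $\Delta t$ from $\E\|\cdots\Delta_mW\|_{L^2_x}^2$; summing over $m$ therefore leaves $c\,h^2/\Delta t$ (or $h^4/\Delta t$ if \eqref{eq:stabpi'} is used). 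Since the theorem imposes only the upper bound $L\Delta t\le(-\varepsilon\log h)^{-1}$ and no lower bound on $\Delta t$ in terms of $h$, this is not $O(h^2+\Delta t)$; it is precisely the $h^2/\Delta t$ obstruction from \cite{CP} that Theorem \ref{thm:4a} is designed to remove.

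The missing idea is to exploit the martingale structure of this term as well: write $\Pi_h\bfe_{h,n}=\Pi_h\bfe_{h,n-1}+(\Pi_h\bfe_{h,n}-\Pi_h\bfe_{h,n-1})$. The part tested against $\Pi_h\bfe_{h,n-1}$ is a martingale (the indicator $\mathbf{1}_{\Omega^\varepsilon_{h,n-1}}$ is $\F_{t_{n-1}}$-measurable), so Burkholder--Davis--Gundy applies and yields $c\,h^4\,\E\big[\max_n\|\nabla\bfu_{n-1}\|_{L^2_x}^2\big]$ plus an absorbable $\kappa$-term. The part tested against the increment is handled by Young with weights $\kappa h^2$ and $c_\kappa h^{-2}$ combined with an inverse estimate on $V^h(\mt)$, so that the increment is absorbed into the telescoping term $\|\Pi_h\bfe_{h,n}-\Pi_h\bfe_{h,n-1}\|_{L^2_x}^2$ on the left, while the $h^{-2}$ is compensated by the $h^4$ coming from \eqref{eq:stabpi'} applied under the It\^o isometry --- note that the $W^{2,2}$ projection estimate (hence $j\geq1$), not merely \eqref{eq:stabpi}, is needed here --- leaving $O(h^2)$. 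Without this step, which is the second main ingredient of the paper's proof, your argument only reproves the Carelli--Prohl bound with the extra $h^2/\Delta t$ term. The rest of your outline (projection errors, convective estimates, adaptedness of the indicator, Gronwall) is sound and follows the paper.
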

\begin{remark}
In the estimate of \cite[Thm. 4.1]{CP} the error is estimated by $h^{-3\varepsilon}(h^2+(\Delta t)+\tfrac{h^2}{\Delta t})$. By our refined pressure analysis we are able to get rid of the term $\tfrac{h^2}{\Delta t}$ which leads to a restrictive assumption between space and time-discretization. Additionally, we can remove the factor $h^{-3\varepsilon}$ arising from the convective term, see \eqref{eq:1806} below. Note that the error term $\tfrac{h^2}{\Delta t}$ also appears in \cite{CHP}, where  the finite-element based space-time discretization of the stochastic Stokes equations (that is the linearized version of \eqref{eq:SNS} without convective term) is studied. 
\end{remark}

\begin{proof}[Proof of Theorem \ref{thm:4a}]
Define the error $\bfe_{h,m}=\bfu_m-\bfu_{h,m}$. Subtracting \eqref{eq:pressure} and \eqref{txdiscr} we obtain
\begin{align*}
\begin{aligned}
\int_{\mt}&\bfe_{h,m}\cdot\bfvarphi \dx +\Delta t\int_{\mt}\mu\Big(\nabla\bfu_m-\nabla\bfu_{h,m}\Big):\nabla\bfphi\dx\\&=\int_{\mt}\bfe_{h,m-1}\cdot\bfvarphi \dx-\Delta t\int_{\mt}\Big((\nabla\bfu_m)\bfu_m-\big((\nabla\bfu_{h,m})\bfu_{h,m-1}+(\Div\bfu_{h,m-1})\bfu_{h,m}\big)\Big)\cdot\bfphi\dx\\
&+\int_{\mt}\big(\Phi(\bfu_m)-\Phi(\bfu_{h,m-1})\big)\,\Delta_mW\cdot \bfvarphi\dx\\
&-\int_{\mt}\nabla\Delta^{-1}\Div\Phi(\bfu_{m-1})\,\Delta_mW\cdot \bfvarphi\dx+\Delta t\int_{\mt}\pi_m^{\mathrm{det}}\,\Div\bfphi\dx
\end{aligned}
\end{align*}
for every $\bfphi\in V_{\Div}^h(\mt)$.
Setting $\bfphi=\Pi_h\bfe_{h,m}$ and applying the identity $\bfa\cdot(\bfa-\bfb)=\frac{1}{2}\big(|\bfa|^2-|\bfb|^2+|\bfa-\bfb|^2\big)$ (which holds for any $\bfa,\bfb\in\mathbb R^n$) we gain
\begin{align}\label{eq:0207}
\begin{aligned}
\int_{\mt}&\frac{1}{2}\big(|\Pi_h\bfe_{h,m}|^2-|\Pi_h\bfe_{h,m-1}|^2+|\Pi_h\bfe_{h,m}-\Pi_h\bfe_{h,m-1}|^2\big) \dx+\Delta t\int_{\mt}\mu|\nabla\bfe_{h,m}|^2\dx\\
&=\Delta t\int_{\mt}\mu\nabla\bfe_{h,m}:\nabla\big(\bfu_{m}-\Pi_h\bfu_{m}\big)\dx\\
&-\Delta t\int_{\mt}\Big((\nabla\bfu_m)\bfu_m-\big((\nabla\bfu_{h,m})\bfu_{h,m-1}+(\Div\bfu_{h,m-1})\bfu_{h,m}\big)\Big)\cdot\Pi_h\bfe_{h,m}\dx\\
&+\Delta t\int_{\mt}\pi_m^{\mathrm{det}}\,\Div\Pi_h\bfe_{h,m}\dx\\
&+\int_{\mt}\big(\Phi(\bfu_m)-\Phi(\bfu_{h,m-1})\big)\,\Delta_mW\cdot \Pi_h\bfe_{h,m}\dx\\
&-\int_{\mt}\nabla\Delta^{-1}\Div\Phi(\bfu_{m-1})\,\Delta_mW\cdot \Pi_h\bfe_{h,m}\dx\\
&=I_1(m)+\dots +I_5(m).
\end{aligned}
\end{align}
Young's inequality yields
\begin{align*}
I_1(m)&\leq \,\kappa\,\Delta t\int_{\mt}|\nabla\bfe_{h,m}|^2\dx+c_\kappa \Delta t\int_{\mt}|\nabla \bfu_{m}-\nabla\Pi_h\nabla\bfu_{m}|^2\dx\\
&\leq \,\kappa\,\Delta t\int_{\mt}|\nabla\bfe_{h,m}|^2\dx+c_\kappa\,h^2 \Delta t\int_{\mt}|\nabla^2 \bfu_m|^2\dx.
\end{align*} 
for every $\kappa>0$ using also \eqref{eq:stab'}.
The convective term $I_2(m)$ can be decomposed as
\begin{align*}
I_2(m)&=I_2^1(m)+I_2^2(m)+I_2^3(m),\\
I_2^1(m)&=-\Delta t\int_{\mt}(\nabla\bfe_{h,m})\bfu_{m-1}\cdot\big(\bfu_m-\Pi_h\bfu_{m}\big)\dx,\\
I_2^2(m)&=\Delta t\int_{\mt}(\nabla\bfe_{h,m})\bfe_{h,m-1}\cdot\big(\bfu_m-\Pi_h\bfu_{m}\big)\dx\\
&+\Delta t\int_{\mt}(\Div\bfe_{h,m})\bfe_{h,m}\cdot\big(\bfu_m-\Pi_h\bfu_{m}\big)\dx,\\
I_2^3(m)&=-\Delta t\int_{\mt}(\nabla\Pi_h\bfe_{h,m})\bfe_{h,m-1}\cdot\bfu_m\dx\\
&-\Delta t\int_{\mt}(\Div\bfe_{h,m-1})\pi_h\bfe_{h,m}\cdot\bfu_m\dx,
\end{align*}
As in \cite[pages 2489--2491]{CP}
we introduce the sample set
\begin{align*}
\Omega_{h,m}^\varepsilon=\Big\{\omega\in\Omega\Big|\max_{0\leq n\leq m}\Big(\|\nabla \bfu_n\|^4_{L^2_x}+\|\bfu_{h,n}\|_{L^2_x}^2\Big)\leq -\varepsilon\log h\Big\}
\end{align*}
and obtain
\begin{align*}
\mathbf{1}_{\Omega^\varepsilon_{h,m-1}}I_2^2(m)
&\leq\,\kappa\mathbf{1}_{\Omega^\varepsilon_{h,m-1}}\Big(\|\nabla\bfe_{h,m-1}\|^2_{L^2_x}+\|\nabla\bfe_{h,m}\|^2_{L^2_x}\Big)\\
&+c_\kappa\,\Delta t \,h^{4}\Big(\max_{1\leq n\leq m}\|\bfe_{h,n}\|_{L^2_x}^2\Big)\|\nabla\bfu_m\|^2_{L^2_x}\|\nabla^2\bfu_{m}\|_{L^2_x}^2,\\
\mathbf{1}_{\Omega^\varepsilon_{h,m-1}}I_2^3(m)
&\leq\,\kappa\mathbf{1}_{\Omega^\varepsilon_{h,m-1}}\Big(\|\nabla\bfe_{h,m-1}\|^2_{L^2_x}+\|\nabla\bfe_{h,m}\|^2_{L^2_x}\Big)\\
&+c_\kappa\,\Delta t\log(h^{-\varepsilon})\Big(\max_{1\leq n\leq m}\mathbf{1}_{\Omega^\varepsilon_{h,n-1}}\|\bfe_{h,n}\|_{L^2_x}\Big)\\
&+c_\kappa\,\Delta t\|\nabla(\bfu_m-\bfu_{m-1})\|^2_{L^2_x}\Big(\max_{1\leq n\leq m}\|\nabla\bfu_{n}\|_{L^2_x}^2\Big)\Big(\max_{0\leq n\leq m}\|\bfe_{h,n}\|_{L^2_x}^2\Big),
\end{align*}
where $\kappa>0$ is arbitrary. For $I_2^1(m)$, however, we get a slightly better estimate than in \cite{CP} since our definition of $I_2^1(m)$ makes use of $\Div\bfu^m=0$. We get
\begin{align}\label{eq:1806}
\mathbf{1}_{\Omega^\varepsilon_{h,m-1}}I_2^1(m)&\leq\,\kappa\mathbf{1}_{\Omega^\varepsilon_{h,m-1}}\|\nabla\bfe_{h,m}\|^2_{L^2_x}+c_\kappa h^3\log(h^{-\varepsilon})\|\nabla^2\bfu_{m}\|_{L^2_x}^2
\end{align}
due to \eqref{eq:stab}.\\
The crucial point in this proof (making the essential difference to \cite{CP}) are the estimates for the pressure terms
$I_3(m)$ and $I_5(m)$. We will estimate $I_3(m)$ first whereas $I_5(m)$ will only be bounded after iterating \eqref{eq:0207}, see the estimates for $\mathscr M_{m,2}$ below. By \eqref{eq:stab'} we have
\begin{align*}
I_3(m)&=\Delta t\int_{\mt}\big(\pi_m^{\mathrm{det}}-\Pi_h^\pi\pi_m^{\mathrm{det}}\big)\Div\Pi_h\bfe_{h,m}\dx\\
&\leq \,c_\kappa\Delta t \,\int_{\mt}|\pi_m^{\mathrm{det}}-\Pi_h^\pi\pi_m^{\mathrm{det}}|^2\dx+\,\kappa\Delta t\,\int_{\mt}|\nabla\Pi_h\bfe_{h,m}|^2\dx\\
&\leq \, c_\kappa\Delta t h^2\,\int_{\mt}|\nabla\pi_m^{\mathrm{det}}|^2\dx+\,\kappa\Delta t\,\int_{\mt}|\nabla\bfe_{h,m}|^2\dx,
\end{align*}
where $\kappa>0$ is arbitrary.
Plugging all together and choosing $\kappa$ small enough (to absorb the corresponding terms to the left-hand side) we have shown
\begin{align*}
&\mathbf{1}_{\Omega^\varepsilon_{h,m-1}}\bigg(\int_{\mt}|\Pi_h\bfe_{h,m}|^2 \dx +\int_{\mt}|\Pi_h\bfe_{h,m}-\Pi_h\bfe_{h,m-1}|^2 \dx+\Delta t\int_{\mt}|\nabla\bfe_{h,m}|^2\dx\bigg)\\
&\leq \,\mathbf{1}_{\Omega^\varepsilon_{h,m-1}}\int_{\mt}|\bfe_{h,m-1}|^2 \dx
+c\,\Delta t\log(h^{-\varepsilon})\Big(\max_{1\leq n\leq m}\mathbf{1}_{\Omega^\varepsilon_{h,n-1}}\|\bfe_{h,n}\|_{L^2_x}\Big)\\&+\,c\Delta t\,h^2\bigg(\int_{\mt}|\nabla\pi_m^{\mathrm{det}}|^2\dx+\int_{\mt}(1+|\nabla\bfu_m|^2)\dx\int_{\mt}|\nabla^2\bfu_{m}|^2\dx\bigg)\\
&+c\,\Delta t \,h^{4}\Big(\max_{1\leq n\leq m}\|\bfe_{h,n}\|_{L^2_x}^2\Big)\int_{\mt}|\nabla\bfu_m|^2\dx\int_{\mt}|\nabla^2\bfu_{m}|^2\dx\\
&+c\,\Delta t\int_{\mt}|\nabla(\bfu_m-\bfu_{m-1})|^2\dx\bigg(\max_{1\leq n\leq m}\int_{\mt}|\nabla\bfu_{n}|^2\dx\bigg)\bigg(\max_{1\leq n\leq m}\int_{\mt}|\bfe_{h,n}|^2\dx\bigg)\\
&+c\,\mathbf{1}_{\Omega^\varepsilon_{h,m-1}}\int_{\mt}\bigg(\int_{t_{m-1}}^{t_m}\Phi(\bfu_{m-1})-\Phi(\bfu_{h,m-1})\big)\,\dd W\bigg)\cdot \Pi_h\bfe_{h,m}\dx\\
&-c\,\mathbf{1}_{\Omega^\varepsilon_{h,m-1}}\int_{\mt}\bigg(\int_{t_{m-1}}^{t_m}\nabla\Delta^{-1}\Div\Phi(\bfu_{m-1})\,\dd W\bigg)\cdot \Pi_h\bfe_{h,m}\dx.
\end{align*}
Iterating this inequality yields
\begin{align*}
&\mathbf{1}_{\Omega^\varepsilon_{h,m-1}}\int_{\mt}|\Pi_h\bfe_{h,m}|^2 \dx +\sum_{n=1}^m\mathbf{1}_{\Omega^\varepsilon_{h,n-1}}\bigg(\int_{\mt}|\Pi_h\bfe_{h,n}-\Pi_h\bfe_{h,n-1}|^2 \dx+\Delta t\int_{\mt}|\nabla\bfe_{h,n}|^2\dx\bigg)\\
&\leq \,\int_{\mt}|\bfe_{h,0}|^2 \dx+c\,\Delta t\log(h^{-\varepsilon})\sum_{n=1}^m\Big(\max_{1\leq \ell\leq n}\mathbf{1}_{\Omega^\varepsilon_{h,\ell-1}}\|\bfe_{h,\ell}\|^2_{L^2_x}\Big)\\
&+\,c\,h^2\Delta t\sum_{n=1}^m\bigg(\int_{\mt}|\nabla\pi_n^{\mathrm{det}}|^2\dx+\int_{\mt}(1+|\nabla\bfu_n|^2)\dx\int_{\mt}|\nabla^2\bfu_{n}|^2\dx\bigg)\\
&+c\,\Delta t \,h^{4}\sum_{n=1}^m\Big(\max_{1\leq \ell\leq n}\|\bfe_{h,\ell}\|_{L^2_x}^2\Big)\int_{\mt}|\nabla\bfu_n|^2\dx\int_{\mt}|\nabla^2\bfu_{n}|^2\dx\\
&+c\,\Delta t\sum_{n=1}^m\int_{\mt}|\nabla(\bfu_n-\bfu_{n-1})|^2\dx\bigg(\max_{1\leq \ell\leq n}\int_{\mt}|\nabla\bfu_{\ell}|^2\dx\bigg)\bigg(\max_{1\leq \ell\leq n}\int_{\mt}|\bfe_{h,\ell}|^2\dx\bigg)\\
&+c\,\sum_{n=1}^m\mathbf{1}_{\Omega^\varepsilon_{h,n-1}}\int_{\mt}\bigg(\int_{t_{n-1}}^{t_n}\Phi(\bfu_{n-1})-\Phi(\bfu_{h,n-1})\big)\,\dd W\bigg)\cdot \Pi_h\bfe_{h,n}\dx\\
&-c\,\sum_{n=1}^m\mathbf{1}_{\Omega^\varepsilon_{h,n-1}}\int_{\mt}\bigg(\int_{t_{n-1}}^{t_n}\nabla\Delta^{-1}\Div\Phi(\bfu_{n-1})\,\dd W\bigg)\cdot \Pi_h\bfe_{h,n}\dx.
&\quad
\end{align*}
Now, we explain how to bound line by line in expectation. The error in the initial datum can be bounded by $h^2$ by \eqref{eq:stab} and the assumption $\bfu_0\in L^2(\Omega;W^{1,2}_{\Div}(\mt))$. The second term on the right-hand side can be handled by the discrete Gronwall lemma using the assumption $L\Delta t\leq \,(-\varepsilon\log h)^{-1}$. The expectation of the second line is bounded by $h^2$ using Lemmas \ref{lemma:3.1} and \ref{lemma:3.2} (the estimate for $\nabla\pi_m^{\mathrm{det}}$ is the first main ingredient in our proof). This estimate is better than the corresponding one in \cite{CP} as only the deterministic part of the pressure appears here.
The expectation of the third line is bounded by
\begin{align*}
c\,h^{4}\bigg(\E\Big[\max_{1\leq \ell\leq M}\|\bfe_{h,\ell}\|_{L^2_x}^2\Big]^2\bigg)^{\frac{1}{2}}\bigg(\E\bigg[\Delta t \sum_{n=1}^M\int_{\mt}|\nabla\bfu_n|^2\dx\int_{\mt}|\nabla^2\bfu_{n}|^2\dx\bigg]^2\bigg)^{\frac{1}{2}}\leq\,ch^4
\end{align*}
as a consequence of Lemmas \ref{lemma:3.1} and \ref{lemma:3.1BCP}.
The fourth line is controlled by $\Delta t$ due to the estimate
\begin{align*}
\E&\bigg[\sum_{n=1}^m\int_{\mt}|\nabla(\bfu_n-\bfu_{n-1})|^2\dx\bigg(\max_{1\leq \ell\leq n}\int_{\mt}|\nabla\bfu_{\ell}|^2\dx\bigg)\bigg(\max_{1\leq \ell\leq n}\int_{\mt}|\bfe_{h,\ell}|^2\dx\bigg)\bigg]\\
&\leq\bigg(\E\bigg[\sum_{n=1}^M\int_{\mt}|\nabla(\bfu_n-\bfu_{n-1})|^2\dx\bigg]^2\bigg)^{\frac{1}{2}}\bigg(\E\bigg[\max_{1\leq \ell\leq M}\int_{\mt}|\nabla\bfu_{\ell}|^2\dx\bigg]^4\bigg)^{\frac{1}{4}}\times\\
&\qquad\times\bigg(\E\bigg[\max_{1\leq \ell\leq M}\int_{\mt}|\bfe_{h,\ell}|^2\dx\bigg]^4\bigg)^{\frac{1}{4}}
\end{align*}
and the uniform bounds from Lemmas \ref{lemma:3.1} and \ref{lemma:3.1BCP} (with $q=3$).
It remains to estimate the two stochastic terms
\begin{align*}
\mathscr M_{m,1}&=
\sum_{n=1}^m\mathbf{1}_{\Omega^\varepsilon_{h,n-1}}\int_{\mt}\int_{t_{n-1}}^{t_n}\big(\Phi(\bfu_{n-1})-\Phi(\bfu_{h,n-1})\big)\,\dd W\cdot \Pi_h\bfe_{h,n}\dx,\\
\mathscr M_{m,2}&=
\sum_{n=1}^m\mathbf{1}_{\Omega^\varepsilon_{h,n-1}}\int_{\mt}\int_{t_{n-1}}^{t_n}
\big(\mathrm{Id}-\Pi_h^\pi\big)\Delta^{-1}\Div\Phi(\bfu_{n-1})\,\dd W\,\Div \Pi_h\bfe_{h,n}\dx.
\end{align*}
Note that we used integration by parts and $\Pi_h\bfe_{h,n}\in V^h_{\Div}(\mt)$ together with the definition of $\Pi_h^\pi$ to rewrite $\mathscr M_{m,2}$ into the form above. Finally, we write
\begin{align*}
\mathscr M_{m,1}&=
\sum_{n=1}^m\mathbf{1}_{\Omega^\varepsilon_{h,n-1}}\int_{\mt}\int_{t_{n-1}}^{t_n}\big(\Phi(\bfu_{n-1})-\Phi(\bfu_{h,n-1})\big)\,\dd W\cdot \Pi_h\bfe_{h,n-1}\dx\\
&+\sum_{n=1}^m\mathbf{1}_{\Omega^\varepsilon_{h,n-1}}\int_{\mt}\int_{t_{n-1}}^{t_n}\big(\Phi(\bfu_{n-1})-\Phi(\bfu_{h,n-1})\big)\,\dd W\cdot (\Pi_h\bfe_{h,n}-\Pi_h\bfe_{h,n-1})\dx\\
&=:\mathscr M_{m,1}^1+\mathscr M_{m,1}^2
\end{align*}
as well as
\begin{align*}
&\mathscr M_{m,2}=
\sum_{n=1}^m\mathbf{1}_{\Omega^\varepsilon_{h,n-1}}\int_{\mt}\int_{t_{n-1}}^{t_n}
\big(\mathrm{Id}-\Pi_h^\pi\big)\Delta^{-1}\Div\Phi(\bfu_{n-1})\,\dd W\,\Div \Pi_h\bfe_{h,n-1}\dx\\
&+\sum_{n=1}^m\mathbf{1}_{\Omega^\varepsilon_{h,n-1}}\int_{\mt}\int_{t_{n-1}}^{t_n}
\big(\mathrm{Id}-\Pi_h^\pi\big)\Delta^{-1}\Div\Phi(\bfu_{n-1})\,\dd W\,\Div (\Pi_h\bfe_{h,n}-\Pi_h\bfe_{h,n-1})\dx\\
&=:\mathscr M_{m,2}^1+\mathscr M_{m,2}^2.
\end{align*}
These representations have the advantage that $\mathscr M_{m,1}^1$ and $\mathscr M_{m,2}^2$ are martingales (note the index $n-1$ in the indicator functions).
Consequently, we can apply the Burkholder-Davis-Gundy inequality to estimate them. As far as $\mathscr M_{m,1}$ is concerned we have
\begin{align*}
&\E\bigg[\max_{1\leq m\leq M}\big|\mathscr M_{m,1}^1\big|\bigg]\\&\leq\,c\,\E\bigg[\sum_{n=1}^M\mathbf{1}_{\Omega^\varepsilon_{h,n-1}}\int_{t_{n-1}}^{t_n}\|\Phi(\bfu_{n-1})-\Phi(\bfu_{h,n-1})\|^2_{L_2(\mathfrak U,L^2_x)}\|\Pi_h\bfe_{h,n-1}\|^2_{L^2_x}\dt\bigg]^{\frac{1}{2}}\\
&\leq\,c\,\E\bigg[\max_{0\leq n\leq M}\mathbf{1}_{\Omega^\varepsilon_{h,n}}\|\Pi_h\bfe_{h,n}\|_{L^2_x}\bigg(\sum_{n=1}^M\mathbf{1}_{\Omega^\varepsilon_{h,n-1}}\int_{t_{n-1}}^{t_n}\|\Phi(\bfu_{n-1})-\Phi(\bfu_{h,n-1})\|^2_{L_2(\mathfrak U,L^2_x)}\dt\bigg)^{\frac{1}{2}}\bigg]\\
&\leq\,\kappa\,\E\bigg[\max_{0\leq n\leq M}\mathbf{1}_{\Omega^\varepsilon_{h,n}}\|\Pi_h\bfe_{h,n}\|^2_{L^2_x}\bigg]+\,c_\kappa\,\E\bigg[\sum_{n=1}^M\mathbf{1}_{\Omega^\varepsilon_{h,n-1}}\int_{t_{n-1}}^{t_n}\|\bfu_{n-1}-\bfu_{h,n-1}\|_{L^2_x}^2\dt\bigg]\\
&\leq\,\kappa\,\E\bigg[\max_{0\leq n\leq M}\mathbf{1}_{\Omega^\varepsilon_{h,n}}\|\Pi_h\bfe_{h,n}\|^2_{L^2_x}\bigg]+\,c_\kappa\,\E\bigg[\Delta t\sum_{n=1}^M\mathbf{1}_{\Omega^\varepsilon_{h,n-1}}\|\Pi_h\bfe_{h,n-1}\|_{L^2_x}^2\bigg]\\
&+c_\kappa\,\E\bigg[\Delta t\sum_{n=1}^M\mathbf{1}_{\Omega^\varepsilon_{h,n-1}}\|\bfu_{n-1}-\Pi_h\bfu_{n-1}\|_{L^2_x}^2\bigg]
\end{align*}
Here, we also used \eqref{eq:phi}, \eqref{eq:stab} as well as Young's inequality for $\kappa>0$ arbitrary. Now, the first term can be absorbed for $\kappa$ small enough.
The second term can be handled by the discrete Gronwall lemma
(note that $\Omega^\varepsilon_{h,n}\subset \Omega^\varepsilon_{h,n-1}$ such that $\mathbf{1}_{\Omega^\varepsilon_{h,n}}\leq \mathbf{1}_{\Omega^\varepsilon_{h,n-1}}$ a.s.)
Using \eqref{eq:stab} the last term can be estimated by
\begin{align*}
c_\kappa h^2\,\E\bigg[\Delta t\sum_{n=1}^M\mathbf{1}_{\Omega^\varepsilon_{h,n-1}}\|\nabla\bfu_{n-1}\|_{L^2_x}^2\bigg]
\end{align*}
which is bounded by $ch^2$ using Lemma \ref{lemma:3.1} (a) (recall that $\bfu_0\in L^2(\Omega;W^{1,2}(\mt))$).
For the term $\mathscr M_{m,1}^2$ we obtain by Cauchy-Schwartz inequality, Young's inequality, It\^{o}-isometry and \eqref{eq:phi}
\begin{align*}
\E\bigg[\max_{1\leq m\leq M}|\mathscr M_{m,1}^2|\bigg]&\leq \,\kappa\,\E\bigg[ \sum_{n=1}^M\mathbf{1}_{\Omega^\varepsilon_{h,n-1}} \big\|\Pi_h\bfe_{h,n}-\Pi_h\bfe_{h,n-1}\big\|_{L^2_x}^2\bigg]\\ &+c_\kappa\,\E\bigg[\sum_{n=1}^M\mathbf{1}_{\Omega^\varepsilon_{h,n-1}}\bigg\| \int_{t_{n-1}}^{t_n}\big(\Phi(\bfu_{n-1})-\Phi(\bfu_{h,n-1})\big)\,\dd W  \bigg\|_{L^2_x}^2\bigg]\\
&\leq \,\kappa\,\E\bigg[ \sum_{n=1}^M \mathbf{1}_{\Omega^\varepsilon_{h,n-1}}\|\Pi_h\bfe_{h,n}-\Pi_h\bfe_{h,n-1}\|_{L^2_x}^2 \bigg]\\& + c_\kappa\,\E\bigg[\sum_{n=1}^M\mathbf{1}_{\Omega^\varepsilon_{h,n-1}}\int_{t_{n-1}}^{t_n}\|\bfu_{n-1}-\bfu_{h,n-1}\|_{L^2_x}^2\dt\bigg].
\end{align*}
The first term can be absorbed for $\kappa$ small enough whereas the second one can be estimated as for $\mathscr M_{m,1}^1$.\\
Now, we come to the second main ingredient which is the estimate for $\mathscr M_{m,2}$.
We obtain using \eqref{eq:stabpi'}, \eqref{eq:phi} and continuity of $\nabla^2\Delta^{-1}$
\begin{align*}
&\E\bigg[\max_{1\leq m\leq M}\big|\mathscr M_{m,2}^1\big|\bigg]\\&\leq\,c\,\E\bigg[\sum_{n=1}^M\mathbf{1}_{\Omega_{h,n-1}}\int_{t_{n-1}}^{t_n}\|\big(\mathrm{Id}-\Pi_h^\pi\big)\Delta^{-1}\Div\Phi(\bfu_{n-1})\|^2_{L_2(\mathfrak U,L^2_x)}\|\nabla\Pi_h\bfe_{h,n-1}\|^2_{L^2_x}\dt\bigg]^{\frac{1}{2}}\\
&\leq\,c\,h^2\,\E\bigg[\max_{1\leq n\leq M}\|\nabla^2\Delta^{-1}\Div\Phi(\bfu_{n-1})\|_{L_2(\mathfrak U,L^2_x)}\bigg(\sum_{n=1}^M\mathbf{1}_{\Omega_{h,n-1}}\int_{t_{n-1}}^{t_n}\|\nabla\Pi_h\bfe_{h,n}\|^2_{L^2_x}\dt\bigg)^{\frac{1}{2}}\bigg]\\
&\leq\,c_\kappa h^4\,\E\bigg[\max_{1\leq n\leq M}\|\nabla\bfu_{n-1}\|^2_{L^2_x}\bigg]+\,\kappa\,\E\bigg[\Delta t\sum_{n=1}^M\mathbf{1}_{\Omega_{h,n-1}}\|\nabla\Pi_h\bfe_{h,n}\|_{L^{2}_x}^2\bigg].
\end{align*}
The first term is bounded by
$\E\big[\max_{m}\big|\mathscr M_{m,2}^1\big|\big]\leq\,ch^4$ using Lemmas \ref{lemma:3.1} (recall that $\bfu_0\in L^2(\Omega;W^{1,2}(\mt))$). The second term
can be absorbed if $\kappa\ll1$.
As a consequence of Young's inequality, inverse estimates on $V^h(\mt)$, It\^o-ismotry and \eqref{eq:stabpi'} we infer
\begin{align*}
\E\bigg[\max_{1\leq m\leq M}|\mathscr M_{m,2}^2|\bigg]&\leq \,\kappa h^2\,\E\bigg[ \sum_{n=1}^M \mathbf{1}_{\Omega_{h,n-1}}\big\|\nabla\big(\Pi_h\bfe_{h,n}-\Pi_h\bfe_{h,n-1}\big)\big\|_{L^{2}_x}^2\bigg]\\ &+c_\kappa h^{-2}\,\E\bigg[\sum_{n=1}^M\mathbf{1}_{\Omega_{h,n-1}}\bigg\| \int_{t_{n-1}}^{t_n}\big(\mathrm{Id}-\Pi_h^\pi\big)\Delta^{-1}\Div\Phi(\bfu_{n-1})\,\dd W  \bigg\|_{L^2_x}^2\bigg]\\
&\leq \,c\,\kappa \,\E\bigg[ \sum_{n=1}^M\mathbf{1}_{\Omega_{h,n-1}} \big\|\Pi_h\bfe_{h,n}-\Pi_h\bfe_{h,n-1}\big\|_{L^{2}_x}^2\bigg]\\ &+c_\kappa h^{-2}\,\E\bigg[\sum_{n=1}^M\mathbf{1}_{\Omega_{h,n-1}}\int_{t_{n-1}}^{t_n}\big\| \big(\mathrm{Id}-\Pi_h^\pi\big)\Delta^{-1}\Div\Phi(\bfu_{n-1})\big\|_{L_2(\mathfrak U;L^2_x)}^2\dt\bigg]\\
&\leq  \,c\,\kappa \,\E\bigg[ \sum_{n=1}^M\mathbf{1}_{\Omega_{h,n-1}} \big\|\Pi_h\bfe_{h,n}-\Pi_h\bfe_{h,n-1}\big\|_{L^{2}_x}^2\bigg]\\ &+c_\kappa h^{2}\,\E\bigg[\sum_{n=1}^M\int_{t_{n-1}}^{t_n}\big\| \nabla^2\Delta^{-1}\Div\Phi(\bfu_{n-1})\big\|_{L_2(\mathfrak U;L^2_x)}^2\dt\bigg].
\end{align*}
The first term can be absorbed for $\kappa$ small enough. 
 Arguing as for $\mathscr M_{m,2}^1$
the last term is bounded by $ch^2\,\E\big[\max_{n}\|\nabla\bfu_{n-1}\|^2_{L^2_x}\big]$.
Plugging all together and noting that $\Omega^\varepsilon_{h}\subset \bigcup_{n=1}^M \Omega^\varepsilon_{h,n}$ shows
\begin{align*}
\E\bigg[\mathbf{1}_{\Omega^\varepsilon_{h}}\bigg(\max_{1\leq m\leq M}\|\Pi_h\bfe_{h,m}\|_{L^2_x}^2&+\Delta t \sum_{m=1}^M \|\nabla\bfe_{h,m}\|_{L^2_x}^2\bigg)\bigg]\leq \,c\,\big(h^2+\Delta t\big).
\end{align*}
Recalling that $\bfe_{h,m}=\bfu_m-\Pi_h\bfu_m+\Pi_h\bfe_{h,m}$ and using \eqref{eq:stab} as well as Lemma \ref{lemma:3.1} (a) gives the claim.
\end{proof}


\begin{thebibliography}{[M]}
\bibitem{BeTe} A. Bensoussan and R. Temam.
\newblock \'{E}quations stochastiques du type Navier--Stokes.
\newblock J. Funct. Anal. {\bf 13}, 195--222, 1973.
\bibitem{Bi} B. Birnir: The Kolmogorov--Obukhov Statistical Theory
of Turbulence. J. Nonlinear Sci. 23, 657--688. (2013)


\bibitem{BeBrMi} H. Bessaih, Z.~Brze\'zniak, A. Millet (2014): \emph{Splitting up method for the 2D stochastic Navier-Stokes
equations.} Stochastic PDE: Anal. Comput. 2, 433--470.
\bibitem{BeMi} H. Bessaih, A. Millet (2018): \emph{Strong $L^2$ convergence of time numerical schemes for the stochastic two-dimensional Navier--Stokes equations} IMA J. Num. Anal. doi:10.1093/imanum/dry058
\bibitem{Br} D. Breit: Existence theory for stochastic power law fluids. J. Math. Fluid Mech. 17, 295--326. (2015)
\bibitem{Br2} D. Breit: \emph{Existence theory for generalized Newtonian fluids.}
Mathematics in Science and Engineering. Elsevier/Academic Press, London. (2017)
\bibitem{BF} F. Brezzi and M. Fortin, Mixed and Hybrid Finite Element Methods, Springer Ser. Comput.
Math. 15, Springer-Verlag, New York, 1991.
\bibitem{BCP}
Z.~Brze\'zniak, E.~Carelli, J. A.~Prohl (2013): \emph{Finite-element-based discretizations of the incompressible Navier--Stokes
equations with multiplicative random forcing}, IMA J. Num. Anal. 33, 771--824.
\bibitem{CP}
E.~Carelli, A.~Prohl (2012): \emph{Rates of convergence for discretizations of the stochastic incompressible Navier-Stokes equations.} SIAM J. Numer. Anal. 50(5), 2467--2496.
\bibitem{CHP} E. Carelli, E. Hausenblas, A. Prohl (2012): \emph{Time-splitting methods to solve the stochastic incompressible Stokes equation.} SIAM J. Numer. Anal. 50, 2917--2939. 
\bibitem{Ca} M. Capi\'nski, A note on uniqueness of stochastic Navier-Stokes equations, Univ. Iagell. Acta Math. 30 (1993), 219--228.
\bibitem{CC} M.Capi\'nski, N.\,J. Cutland, Stochastic Navier-Stokes equations, Acta Appl. Math. 25 (1991), 59--85.
   \bibitem{PrZa} G. Da Prato, J. Zabczyk (1992): Stochastic Equations in Infinite Dimensions, Encyclopedia
Math. Appl., vol. 44, Cambridge University Press, Cambridge. 
\bibitem{FlaGat} F.~Flandoli and D.~Gatarek
\newblock Martingale and stationary solutions for stochastic Navier-Stokes equations.
\newblock {\em Probab. Theory Relat. Fields}, {\bf 102}, 367--391, 1995.
\bibitem{GR} Girault,V. \& Raviart, P. A. (1981) Finite Element Method for Navier--Stokes Equations: Theory and Algorithms. Berlin/Heidelberg/New York: Springer.
\bibitem{GL} V. Girault and J.L. Lions, Two-grid finite-element schemes for the steady Navier-Stokes
problem in polyhedra, Port. Math. (N.S.), 58 (2001), pp. 25--57.
\bibitem{GS} V. Girault and L.R. Scott, A quasi-local interpolation operator preserving the discrete divergence,
Calcolo, 40 (2003), pp. 1--19.
\bibitem{HR} J.G. Heywood and R. Rannacher, Finite element approximation of the nonstationary
Navier--Stokes problem. I. Regularity of solutions and second-order error estimates for
spatial discretization. SIAM J. Numer. Anal. {\bf 19}, 275--311, 1982.
	\bibitem{Ho1} M. Hofmanov\'{a} (2013): Degenerate Parabolic Stochastic Partial Differential Equations. Stoch. Pr. Ap. 123 (12), 4294--4336.
\bibitem{KukShi}
S.~Kuksin and A.~Shirikyan.
\newblock {\em Mathematics of two-dimensional turbulence}, volume 194 of {\em
  Cambridge Tracts in Mathematics}.
\newblock Cambridge University Press, Cambridge, 2012.
\bibitem{MikRoz} R. Mikulevicius and B. L. Rozovskii. Stochastic Navier-Stokes equations for turbulent flows. SIAM J. Math. Anal., 35(5):1250--1310, 2004.

  \bibitem{Pa} E. Pardoux (1975): Equations aux d\'{e}riv\'{e}es Partielles stochastiques non lin\'{e}aires monotones. Etude de solutions fortes de
type It\^{o}, Ph.D. thesis, Universit\'{e} Paris Sud.

\bibitem{PrRo}  C. Pr\'{e}v\^{o}t, M. R\"ockner (2007): A concise course on stochastic partial differential equations. Lecture Notes in Mathematics, 1905. Springer, Berlin.

\bibitem{Pr} J. Printems (2001): \emph{On the discretization in time of parabolic stochastic partial differential equations.}
Math. Mod. Numer. Anal. 35, 1055--1078.
\bibitem{Ro} 
M.~Romito.
\newblock Some probabilistic topics in the Navier--Stokes equations. 
\newblock Recent progress in the theory of the Euler and Navier--Stokes equations.
\newblock {\em London Math. Soc. Lecture Note Ser.} {\bf 430}, 175--232,  Cambridge Univ. Press, Cambridge, 2016.


\bibitem{ScoZha90}
L.~R. Scott and S.~Zhang, \emph{Finite element interpolation of nonsmooth
  functions satisfying boundary conditions}, Math. Comp. 54 (1990),
  no.~190, 483--493.



\bibitem{Ya1} Y. Yan (2004): Semidiscrete Galerkin Approximation for a Linear Stochastic Parabolic Partial Differential Equation Driven by an Additive Noise. Num. Math. 44, 829--847.

\bibitem{Ya2} Y. Yan (2005): Galerkin finite element methods for stochastic parabolic partial differential equations,
SIAM J. Numer. Anal., 43, pp. 1363--1384.

\bibitem{Yo} N. Yoshida (2012): Stochastic Shear thickening fluids: strong convergence of the Galerkin approximation and the energy inequality. The Annals of Applied Probability 2012, Vol. 22, No. 3, 1215--1242


\end{thebibliography}
\end{document}